\newtheorem{thm}{Theorem}[section]
\newtheorem{cor}[thm]{Corollary}
\newtheorem{lem}[thm]{Lemma}
\newtheorem{prop}[thm]{Proposition}
\theoremstyle{definition}
\numberwithin{equation}{section}
\newcommand{\R}{{\mathbb{R}}}
\renewcommand{\Re}{{\mathfrak{Re}}}
\renewcommand{\Im}{{\mathfrak{Im}}}
\renewcommand{\a}{\alpha}
\renewcommand{\b}{\beta}
\newcommand{\s}{\sigma}
\newcommand{\del}{\delta}
\begin{document}
\baselineskip=17pt

\title[Explicit zero density for the Riemann zeta function]{Explicit zero density for the Riemann zeta function}

\author[H. Kadiri]{Habiba Kadiri}
\address{Department of Mathematics and Computer Science\\
University of Lethbridge\\
4401 University Drive\\
Lethbridge, Alberta\\
T1K 3M4 Canada}
\email{habiba.kadiri@uleth.ca}

\author[A. Lumley]{Allysa Lumley}
\address{Department of Mathematics and Statistics\\
York University\\
4700 Keele St\\
Toronto, Ontario\\
M3J 1P3
 Canada}
\email{alumley@yorku.ca}

\author[N. Ng]{Nathan Ng}
\address{Department of Mathematics and Computer Science\\
University of Lethbridge\\
4401 University Drive\\
Lethbridge, Alberta\\
T1K 3M4 Canada}
\email{nathan.ng@uleth.ca}

\thanks{
Research for this article is partially supported by the NSERC Discovery grants of H.K. (RGPIN-2015-06799) and N.N. (RGPIN-2015-05972).
The calculations were executed on the University of Lethbridge Number Theory Group Eudoxus machine, supported by an NSERC RTI grant.}

\begin{abstract}
Let $N(\sigma,T)$ denote the number of nontrivial zeros of the Riemann zeta function with real part greater than $\sigma$ and imaginary part between $0$ and $T$. We provide explicit upper bounds for $N(\sigma,T)$ commonly referred to as a zero density result. In 1937, Ingham showed the following asymptotic result  $N(\sigma,T)=\mathcal{O} ( T^{\frac83(1-\sigma)} (\log T)^5 )$.
Ramar\'{e} recently proved an explicit version of this estimate.
We discuss a generalization of the method used in these two results which yields an explicit bound of a similar shape while also improving the constants. 
\end{abstract}

\subjclass[2010]{Primary 11M06, 11M26; Secondary 11Y35}

\keywords{Riemann zeta function, zero density, explicit results}

\maketitle

\section{Introduction}
Throughout this article $\zeta(s)$ denotes the Riemann zeta function and $\varrho $ denotes a non-trivial zero of $\zeta(s)$
lying in the critical strip, $0 < \Re(s) < 1$.
Let $\frac12 < \s <1, T>0$, and define
\begin{equation}
  \label{def-NsigT}
  N(\sigma,T) = \# \{ \varrho =\beta+i\gamma : \  \zeta(\varrho )=0, 0 < \gamma < T \text{ and } \sigma < \beta < 1\}.
\end{equation}
We shall prove a non-trivial, explicit upper bound for $N(\sigma,T)$. Such a bound  is commonly referred to as a zero-density estimate. 
We denote RH the Riemann Hypothesis and $\text{RH}(H_0)$ the statement: 
\begin{equation}
  \label{partialRH}
  \text{RH}(H_0): 
\text{ all non-trivial zeros }  \varrho  \text{ of } \zeta(s)\ \text{ with }\ |\Im(\varrho )| \le H_0 \text{ satisfy } \Re(\varrho )=\frac12 . 
\end{equation}
Currently, the best published value of $H_0$ for which \eqref{partialRH} is true is due to David Platt \cite{Pla0}:
\[H_0=3.0610046 \cdot 10^{10}\] with $N(H_0)=103\,800\,788\,359$.
Other strong evidence towards the RH is the large body of zero-density estimates for $\zeta(s)$. 
Namely, very good bounds for $N(\sigma,T)$ in various ranges of $\sigma$.  
\newline
Let $\sigma >\frac{1}{2}$. 
In 1913 Bohr and Landau \cite{BohLan} showed that 
\begin{equation}
  \label{bohrlandau}
  N(\sigma,T) = \mathcal{O}\left( \frac{ T}{\sigma-\frac{1}{2}} \right) 
\end{equation}
for $T$ asymptotically large.
This result implies that for any fixed $\varepsilon >0$, almost all zeros of $\zeta(s)$ lie in the band 
$|\frac{1}{2}-\Re(s)| < \varepsilon$. 
This was improved in 1937 by Ingham \cite{Ing}, who showed
\begin{equation}
   \label{ingham}
  N(\sigma,T)  = \mathcal{O} \left( T^{(2+4c)(1-\s)} (\log T)^5 \right)  
\end{equation}
assuming that $\zeta(\frac12 +it) = \mathcal{O} \left( t^{c+\epsilon} \right) $. 
In particular, the Lindel\"of Hypothesis $\zeta(\frac12 +it) = \mathcal{O} \left( t^{\epsilon} \right) $ implies that
$  N(\sigma,T) = \mathcal{O} \left( T^{2(1-\s)+\epsilon}\right) $, also known as the Density Hypothesis. 
There is a prolific literature on the bounds for $\zeta(s)$, starting with the convexity bound of 
$c=\frac14=0.25$ (Lindel\"of), the first subconvexity bound of Hardy \& Littlewood \cite{HaLi} $c=\frac16=0.1666\ldots$, to some more recent results 
of Huxley \cite{Hux} (2005) $c=\frac{32}{205} = 0.1560\ldots$ and of Bourgain \cite{Bou} (2017) $c=\frac{13}{84} = 0.1547\ldots$.
In addition, there are also many articles on estimates for $N(\s, T)$.  A selection of some notable results may be found in \cite{Hux}, \cite{Hux2}, \cite{Ju}, and \cite{Bou}. 
On the other hand, there are few explicit bounds for $N(\sigma,T)$.
We refer the reader to a result of the first author \cite{Kad} for an explicit version of Bohr and Landau's bound.
The method provides two kind of results:
for $T$ asymptotically large, as in
$
N( 0.90,T) \le 0.4421 T + 0.6443 \log T - 363\,301,
$
and for $T$ taking a specific value, as in
$N(0.90,H_0) < 96.20$.
These bounds are useful to improve estimates of prime counting functions, as in \cite{FabKad}, \cite{Dus4}, \cite{PlaTru}, \cite{Tru} and in \cite{KadLum} to find primes in short intervals.
Ramar\'{e}  had earlier proven a version of \eqref{ingham} in his D.E.A. memoire,
which remained unpublished until recently. 
Let $\s \ge 0.52$ be fixed.
In \cite{Ram} he proves \footnote{Equation (1.1)  \cite[p. 326 ]{Ram} gives the bound  $N(\s,T) \le 4.9 
 (3T)^{\frac{8(1-\s)}{3}} (\log T)^{5-2\s} + 51.5 (\log T)^2$.  However,  there is a mistake in \cite{Ram}.  
 The authors have been in communication with Professor Ramar\'{e} and he has sent us 
 a proof of the revised inequality  \eqref{ram-NsigT-eq2}.  
  } that 
for any $T\ge 2000$
\begin{equation} \label{ram-NsigT-eq2}
N(\s,T) \le  
965
 (3T)^{\frac{8(1-\s)}{3}} (\log T)^{5-2\s} + 51.5 (\log T)^2 ,
\end{equation}
which gives 
$
N(0.90,T) < 1293.48   (\log T)^{\frac{16}{5}}  T^{\frac4{15} } + 51.50 (\log T)^2 ,
$
which gives the bound for $T=H_0$:
$
N(0.90, H_0)  < 2.1529\cdot10^{10}.
$
The purpose of this article is to bound $N(\s,T)$ by applying Ingham's argument with a general weight 
and to improve both \cite{Kad} and \cite{Ram}. 
\begin{thm} \label{thm-NsigT}
Let  $\frac{10^9}{H_0} \le k \le 1,d>0, H\in [1002,H_0)$, $\a>0$, $\del\ge1$, $\eta_0=0.23622 \ldots$, $1+\eta_0\le \mu \le 1+\eta$,
and $\eta\in(\eta_0,\tfrac12)$ be fixed. Let $\s > \frac12 +\frac{d}{\log H_0}$.\\
Then there exist $\mathcal{C}_1 ,\mathcal{C}_2 >0$ such that, for any $T\ge H_0$,
\begin{equation}
 \label{KLN-NsigT1}
N(\s,T)
\le \frac{(T-H)(\log T)}{2\pi d}
\log
\Big( 1 
+\frac{  \mathcal{C}_1
(\log (kT))^{2\s}  (\log T)^{4(1-\s)} T^{\frac83(1-\s)}
 }{T-H}  \Big)
+ \frac{ \mathcal{C}_2  }{2\pi d} (\log T)^2 ,
\end{equation}
where $\mathcal{C}_1= \mathcal{C}_1(\alpha, d,\del, k, H, \s)$ and $\mathcal{C}_2=\mathcal{C}_2 (d, \eta, k, H, \mu,\s)$ are defined in \eqref{def-mathcalC1} and \eqref{def-mathcalC2}.
Since $\log(1+x) \le x$ for $x \ge 0$, \eqref{KLN-NsigT1} implies
\begin{equation}
\label{KLN-NsigT2}
N(\s,T)
\le 
 \frac{ \mathcal{C}_1}{2\pi d}
(\log (kT))^{2\s}  (\log T)^{5-4\s } T^{\frac83(1-\s)}
+ \frac{ \mathcal{C}_2  }{2\pi d} (\log T)^2.
 \end{equation}
In addition, numerical results are displayed in tables in Section \ref{tables}. 
\end{thm}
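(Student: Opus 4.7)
The plan is to adapt Ingham's mollifier method in an explicit form, taking advantage of a smoothly weighted approximation to $1/\zeta(s)$ whose shape is dictated by the free parameters $\alpha,\delta,\eta,\mu,k,H$ in the statement. I would first introduce the truncated M\"obius polynomial $M_X(s)=\sum_{n\le X}\mu(n)n^{-s}$ for a length $X$ to be optimised, and set $f_X(s)=\zeta(s)M_X(s)-1 = \sum_{n>X}a_n n^{-s}$ with $a_n=\sum_{d\mid n,\ d\le X}\mu(d)$. M\"obius cancellation forces $a_n=0$ whenever all prime factors of $n$ are at most $X^{1/2}$, which is the arithmetic gain that ultimately produces Ingham's exponent $\tfrac{8}{3}(1-\sigma)$. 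At every non-trivial zero $\varrho$ of $\zeta$ one has $f_X(\varrho)=-1$, so $f_X$ serves as a zero detector.

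Next I would insert a smoothing weight $g$, the normalisation of which encodes the parameters in the statement, via a truncated Mellin integral
\[
\frac{1}{2\pi i}\int_{(c)} f_X(s+w)\,\widehat{g}(w)\,Y^w\,dw,
\]
and shift the contour to the left of $\Re w=0$, picking up the pole at $w=1-s$ (source of the $\mathcal{C}_2(\log T)^2$ term and the condition $\sigma>\tfrac12+d/\log H_0$), the residue at $w=0$, and the contribution of the non-trivial zeros. The resulting identity can be rearranged so that at each zero $\varrho=\beta+i\gamma$ with $\beta>\sigma$ and $\gamma\in[H,T]$ a smoothed tail $\bigl|\sum_{n>X}a_n g(n/Y)n^{-\varrho}\bigr|$ is bounded below by a constant depending on the parameters. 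The zero count then reduces to counting $t$-values at which an $L^2$ quantity is large.

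The $L^2$ average of this smoothed tail over $t\in[H,T]$ is estimated by an explicit Montgomery-style mean value theorem, giving a bound proportional to
\[
\bigl((T-H)+O(X)\bigr)\sum_{n>X} |a_n|^2 g(n/Y)^2 n^{-2\sigma};
\]
using $|a_n|\le\tau(n)$ and standard Rankin-type estimates the inner sum contributes an $X^{1-2\sigma}(\log X)^4$ factor, and the choice $X\asymp T^{2/3}$ balances the two error terms, delivering $T^{\frac83(1-\sigma)}(\log T)^{4(1-\sigma)}$. The remaining $(\log(kT))^{2\sigma}$ factor comes from the explicit convexity bound for $|\zeta(\sigma+it)|$ inherited from Phragm\'en--Lindel\"of between $\Re s=\tfrac12$ and $\Re s=1+\varepsilon$, with $k$ controlling the scale at which the bound is applied. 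A Jensen-type pigeonhole, counting zeros in the rectangle $\beta>\sigma$, $H\le \gamma\le T$, via the logarithm of an integrated quantity, produces the exact shape \eqref{KLN-NsigT1}; \eqref{KLN-NsigT2} follows from $\log(1+x)\le x$.

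The main obstacle will be the bookkeeping of explicit constants through every layer of this argument: the truncated Mellin inversion requires controlled error terms compatible with the explicit subconvexity input; the mean value theorem contributes constants depending on $\alpha$, $\delta$, and $k$; and the residue at $w=1-s$ is very sensitive to the proximity of $s$ to $1$, which forces the narrow window $\mu\in[1+\eta_0,1+\eta]$ and the lower bound on $\sigma$. A secondary delicate point is separating the low-lying zeros, handled by $\mathrm{RH}(H_0)$ since $T\ge H_0$ and the counting is restricted to $\gamma\ge H$, from the bulk estimate, and then pinning down each constant in $\mathcal{C}_1$ and $\mathcal{C}_2$ symbolically in the parameters before the numerical optimisation performed in Section \ref{tables}.
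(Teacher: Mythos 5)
Your proposal is not the paper's argument, and as sketched it contains errors that would prevent it from producing the stated exponent. The paper does not use a zero-detection/Mellin-contour method at all: it applies Littlewood's lemma to $h_X(s)=1-f_X(s)^2$ on the rectangle $[\sigma',\mu]\times[H,T]$, bounds $\int_H^T\log|h_X(\sigma'+it)|\,dt$ by $(T-H)\log\bigl(1+\tfrac{1}{T-H}\int_H^T|f_X(\sigma'+it)|^2dt\bigr)$ via Jensen, and then estimates the weighted second moment $\mathcal{M}_{g,T}(X,\sigma)$ (with $g(s)=\tfrac{s-1}{s}e^{\alpha(s/T)^2}$) by the Hardy--Ingham--P\'olya convexity inequality between $\sigma_1=\tfrac12$ and $\sigma_2=1+\delta/\log X$. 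The two horizontal sides and the edge $\Re s=\mu$ are what produce $\mathcal{C}_2(\log T)^2$ (via a Backlund-type argument bound and a mean value estimate, each divided by $\sigma-\sigma'=d/\log T$); the pole of $\zeta$ plays no role of the kind you describe, and the constraint $\sigma>\tfrac12+d/\log H_0$ merely ensures $\sigma'=\sigma-d/\log T\ge\tfrac12$.

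Two of your central claims are wrong. First, it is false that $\lambda_X(n)=\sum_{d\mid n,\,d\le X}\mu(d)$ vanishes whenever all prime factors of $n$ are at most $X^{1/2}$ (take $n=pqr>X$ with $p,q,r\le X^{1/2}$: the sum equals $1-3+3=1$); the only cancellation used is the trivial one, $\lambda_X(n)=0$ for $n\le X$, and it is \emph{not} the source of the exponent. The exponent $\tfrac83(1-\sigma)=(2+4c)(1-\sigma)$ with $c=\tfrac16$ comes from inserting Hiary's explicit bound $|\zeta(\tfrac12+it)|\le 0.63\,t^{1/6}\log t$ into the mollified second moment on the half-line, which gives $\mathcal{M}_{g,T}(X,\tfrac12)\ll T^{4/3}(\log T)^2\log(kT)$, and then interpolating to $\sigma$ with convexity weight $\approx 2(1-\sigma)$; correspondingly the mollifier length is $X=kT$ with $\tfrac{10^9}{H_0}\le k\le 1$, not $X\asymp T^{2/3}$. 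Your attribution of the $(\log kT)^{2\sigma}$ factor to a Phragm\'en--Lindel\"of bound for $\zeta$ is also off: it is the $\log X$ powers carried by the moment bounds at the two endpoints of the interpolation. Without the subconvexity input at $\sigma=\tfrac12$ and the convexity step, the $L^2$ mean value theorem alone cannot deliver $T^{\frac83(1-\sigma)}$, so the proposal as written has a genuine gap at its core.
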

For instance \eqref{KLN-NsigT2} gives
$
N(0.90,T) < 11.499 (\log T)^{\frac{16}{5}}  T^{\frac4{15} } + 3.186 (\log T)^2 ,
$
and \eqref{KLN-NsigT1} gives
$
N(0.90,H_0) < 130.07.
$
This improves previous results both numerically and methodologically (one of the key ingredients is the choice of a more efficient weight function in Ingham's method). 
Note that choosing $k<1$ and optimizing in $H$ can provide extra improvements to \eqref{ram-NsigT-eq2}. 
In addition, we prove a stronger bound for the argument of a holomorphic function.  
We now explain the main ideas to prove Theorem \ref{thm-NsigT}.
\section{Setting up the proof}
\subsection{Littlewood's classical method to count the zeros}
Let $h(s)=\zeta(s)M(s)$ where $M(s)$ is entire and 
\begin{equation}
   \label{NhsigT}
  N_h(\sigma,T) = \# \Big\{ \varrho '=\beta'+i\gamma' \in \mathbb{C} \ : \  h(\varrho ')=0,  \sigma < \beta' < 1, \text{ and }
  0 < \gamma' < T   \Big\}.
\end{equation}
Then for a parameter $H\in(0, H_0)$, we have by \eqref{partialRH} that
\[
   N(\s,T) =N(\sigma,T)-N(\sigma,H) \le N_h(\sigma,T)-N_h(\sigma,H)
\]
for $T \ge H_0$.
We compare the above number of zeros for $h$ to its average:
\[
   N_h(\sigma,T)-N_h(\sigma,H) \le \frac{1}{\sigma-\sigma'}  \int_{\s'}^{\mu} 
   ( N_h(\tau,T)-N_h(\tau,H)) \, d \tau
\]
where $\mu > 1$ and $\sigma'$ is a parameter satisfying $\frac12 < \sigma' < \sigma$. 
Let $\mathcal{R}$ be the rectangle with vertices $\sigma'+iH$, $\mu+iH$, $\mu+iT$, and $\sigma'+iT$.
We apply the classical lemma of Littlewood as stated in \cite[(9.9.1)]{Tit}:
\begin{equation}\label{approxineq} 
 \int_{\s'}^{\mu}\Big( N_h(\tau,T) -N_h(\tau,H) \Big) d\tau  = - \frac1{2\pi i} \int_{\mathcal{R}} \log h(s) ds . 
\end{equation}
Thus
\begin{multline}\label{boundN2}
N(\s,T)
\le   \frac{1}{2\pi(\s-\s')}  \Big( 
\int_{H}^{T} \log| h(\s'+it)| dt
\Big. \\ \Big.+    \int_{\s'}^{\mu} \arg h(\tau+iT) d\tau 
-  \int_{\s'}^{\mu} \arg h(\tau+iH) d\tau 
  - \int_{H}^{T} \log | h(\mu+it)| dt
\Big).
\end{multline}
As $T$ grows larger, the main contribution arises from the first integral.
The second and third integrals can be treated by using a general result for bounding
$\arg f(s)$ for $f$ a holomorphic function.  To do this we give an improvement of  
a lemma of Titchmarsh \cite[p. 213]{Tit}  (see Proposition \ref{Backlund} and Corollary \ref{arghX} below).
The fourth integral can be estimated with 
a standard mean value theorem for Dirichlet polynomials (see Lemma \ref{ExplicitMV}).
A key goal is to minimize the above expression over admissible functions $h$.
We now give an idea of how to estimate the first integral in \eqref{boundN2}. 
\subsection{How the second mollified moment of $\zeta(s)$ occurs}
 Let $X \ge 1$ be a parameter and define the mollifier to be 
\begin{equation}
   \label{def-Mxs}
  M_X(s) =  \sum_{n \le X} \frac{\mu(n)}{n^s}
\end{equation}
where $\mu(n)$ is the M\"{o}bius function.   Note that this is a truncation of the Dirichlet series for $\zeta(s)^{-1}$.
These mollifiers were invented by Bohr and Landau \cite{BohLan} to help control the size of $\zeta(s)$ in the critical strip.
Futhermore, let 
\begin{equation}
  \label{def-fXs}
  f_X(s) = \zeta(s) M_X(s) -1.
\end{equation}
Note that the series expansion for $f_X$ is given by
\begin{align}
& \label{series-fX}
  f_X(s) 
= \sum_{n > X}  \Big( \sum_{ \substack{ d \mid n \\  d \le X}} \mu(d) \Big) n^{-s}
= \sum_{n\ge 1} \frac{\lambda_X(n)}{n^s} ,
\\
\text{with }\ 
&
  \label{lambdaXdefn}
  \lambda_X(n) 
= 
0 \ \text{ if } n \le X,\ \ 
\lambda_X(n) = \sum_{\begin{substack} {d \mid n \\ d \le X} \end{substack}} \mu(d) \ \text{ if } n > X. 
\end{align}
We shall choose $h = h_X$ with
\begin{equation}
  \label{def-hXs}
  h_X(s) = 1-f_X(s)^2=\zeta(s)M_X(s) (2-\zeta(s)M_X(s)). 
\end{equation}
Since we have \[ \frac1{b-a} \int_a^b \log f(t) dt \le  \log \left( \frac1{b-a} \int_a^b f(t) dt\right),\] for any $f$ non-negative and continuous,
and  $| h_X(s)|  \le 1 + |f_X(s)|^2$,
we deduce that
\begin{equation}
   \label{Jensenbnd}
    \int_{H}^{T}  \log \left(  | h_X(\s'+it)| \right) dt 
\le (T-H) \log\left( 1  +  \frac1{T-H}  \int_{H}^{T} |f_X(\sigma'+it)|^2 dt \right).
\end{equation}
We denote 
\begin{equation}
  \label{def-FX}
   F_X(\sigma,T) = \int_{0}^{T} |f_X(\sigma+it)|^2 dt  \text{ where } \sigma \ge \frac{1}{2}.
\end{equation}
To resume, the key point for getting a good bound on $N(\sigma,T) - N(\sigma,H)$ is to obtain a good
bound for $F_X(\sigma,T)$. Following a classical method due to Ingham we compare it to a smoothed version of itself.
\subsection{Ingham's smoothing method}\label{Ingham-method}
Let $\sigma_1$ and $\sigma_2$ be such that $\s_1 < \sigma < \sigma_2$.
Let $T>0$ and $g=g_T$ be a non-negative, real valued function, depending on the parameter $T$, and holomorphic in $\sigma_1 \le \Re(s) \le \sigma_2$.
We define
\begin{equation}\label{def-Mgsigma}
 \mathcal{M}_{g,T} (X, \s)  = \int_{-\infty}^{+\infty} |g (\sigma+it)|^2 |f_X(\sigma+it)|^2 dt .
\end{equation}
We shall consider $g $ of a special shape.  For $\a,\beta>0$, assume that there exist positive functions $\omega_1,\omega_2$ such that $g$ satisfies, for all $\sigma \in [\sigma_1,\sigma_2] $,
\begin{align}\label{gupbd}
&   
 |g (\sigma+it)|\le \omega_1(\s,T,\a)e^{-\a\left(\frac{|t|}{T}\right)^{\beta}}\ \text{ for all}\  t,
\\
\label{glbd}
&   \omega_2(\sigma,T,\a) \le |g (\sigma+it)|   \ \text{ for all}\  t\in[H,T].
\end{align}
In addition, we assume that $|g|$ is even in $t$: 
\begin{equation}
 \label{gabseven}
 |g(\s-it)|=|g(\s+it)| \text{ for } \s \in (\s_1,\s_2) \text{ and } t \in \mathbb{R}.
\end{equation}
Thus $F_X(\sigma,T) \ll_g  \mathcal{M}_{g,T} (X, \s) $, and more precisely
\begin{equation}\label{bnd-F}
F_X(\s,T)  \le \frac{\mathcal{M}_{g,T} (X, \s) }{2(\omega_2(\sigma,T,\a))^2 }  .     
\end{equation}
In this article, we shall choose a family of weights of the form 
\begin{equation}
  \label{gdefn}
  g(s) =  g_{T}(s) = \frac{s-1}{s} e^{\alpha (\frac{s}{T})^2 },  \text{ where } \alpha >0.
\end{equation}
These weights will satisfy the above conditions 
with $\beta=2$.
We remark that Ingham \cite{Ing} made use of the weight $g(s) = \frac{s-1}{s\cos(\frac{1}{2T})}  $ and Ramar\'{e} \cite{Ram}  used 
$g(s)= \frac{s-1}{s(\cos s )^{\frac{1}{2T}}} $.  These weights satisfy \eqref{gupbd} with $\beta=1$.  We also 
studied the weights $g(s) = \frac{s-1}{s(\cos s )^{\frac{\alpha}{T}}}$
and  $g(s) = \frac{s-1}{s(\cos \frac{\alpha}{T} )}$.  
However, we obtained the best results with $g$ given by \eqref{gdefn}. 
The functions $g$ are chosen so that for fixed $\sigma$,   $g(\sigma+it)$ behave likes the indicator function, $\mathds{1}_{[0,T]}(t)$,
and for $t$ large, $g(\sigma+it)$ has rapid decay. 
Nevertheless, it is an open problem to determine the best weights $g$ to use in this problem.   
\subsection{Final bound}
Finally, to bound the integral $\mathcal{M}_{g,T} $, we appeal to a convexity estimate for integrals (see \cite{HIP}).
For $\sigma_2 >1$ (and $\s_2$ close to $1$), if $\frac12 \le \s \le \s_2$, then
\begin{equation}
\mathcal{M}_{g,T} (X, \s) \le \mathcal{M}_{g,T} (X, \tfrac12 )^{\frac{\sigma_2-\sigma}{\sigma_2-\frac12 }}  
\mathcal{M}_{g,T} (X, \sigma_2)^{\frac{\sigma-\frac12 }{\sigma_2-\frac12 }}.
\end{equation}
The largest contribution arises from $ \mathcal{M}_{g,T} (X, \frac12 )$. 
To bound this we make use of:
\begin{itemize}
\item bounds \eqref{gupbd}, \eqref{glbd}  for $g$ (see Lemma \ref{UpboundPhicor}), 
\item a version of Montgomery and Vaughan's Mean Value Theorem for Dirichlet polynomials (see Lemma \ref{ExplicitMV}), 
\item bounds for arithmetic sums to bound the second moment of the mollifier $M_X$ (we use Ramar\'e's bounds, see Lemma \ref{mu2bd} and \ref{lambdaXbd}),
\item the most recent explicit subconvexity bound for the Riemann zeta function (due to Hiary \cite{Hiary}, see Lemma \ref{zetahalf}).
\end{itemize}
\section{Preliminary lemmas}
\subsection{Bounds for the Riemann zeta function}
In this section we record a number of bounds for the zeta function. 
Rademacher \cite[Theorem 4]{Rad} established the following explicit convexity bound. 
\begin{lem}
\label{bndLschi}
For $-\frac{1}{2}\le -\eta\le \sigma \le 1+\eta\le\frac32$, we have 
\begin{equation}
|\zeta(s)|\le 3\frac{|1+s|}{|1-s|}\left(\frac{|1+s|}{2\pi}\right)^{\frac{1}{2}(1-\sigma+\eta)}\zeta(1+\eta).
\end{equation}
\end{lem}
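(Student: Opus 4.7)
The statement is a Phragmén--Lindelöf convexity bound for $\zeta(s)$ in the strip $-\eta\le\Re(s)\le 1+\eta$, so I would follow Rademacher's original strategy: construct an auxiliary function that is bounded on both vertical boundary lines, apply Phragmén--Lindelöf to interpolate, then translate back. The natural choice is
\[
F(s) = \frac{s-1}{s+1}\zeta(s),
\]
which is entire; the factor $s-1$ removes the pole of $\zeta$ at $s=1$, and the factor $1/(s+1)$ is chosen to reproduce the ratio $|1+s|/|1-s|$ appearing on the right-hand side of the claim.

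On the line $\sigma=1+\eta$, the Dirichlet series expansion gives $|\zeta(s)|\le\zeta(1+\eta)$, hence $|F(s)|\le\zeta(1+\eta)$. On the line $\sigma=-\eta$ I would invoke the functional equation $\zeta(s)=\chi(s)\zeta(1-s)$ together with the same Dirichlet bound applied to $\zeta(1-s)$, reducing the problem to an explicit estimate for $|\chi(-\eta+it)|$. An explicit form of Stirling's formula yields a uniform bound of the shape
\[
|\chi(-\eta+it)|\le c\left(\frac{|1+s|}{2\pi}\right)^{\frac12+\eta}
\]
for all $t\in\R$, from which
\[
|F(s)|\le c\left(\frac{|1+s|}{2\pi}\right)^{\frac{1+2\eta}{2}}\zeta(1+\eta)
\]
on this boundary.

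To interpolate the $|1+s|$-dependent exponent linearly between the two lines I would introduce
\[
G(s) = F(s)\left(\frac{2\pi}{s+1}\right)^{\frac{1+\eta-s}{2}},
\]
defined via the principal branch of $\log(s+1)$, which is well-defined since $\Re(s+1)\ge 1-\eta>0$ throughout the strip. The exponent $\tfrac{1+\eta-s}{2}$ equals $0$ on $\sigma=1+\eta$ and $\tfrac{1+2\eta}{2}$ on $\sigma=-\eta$, so after absorbing $c$ and the modulus of the complex power into a single numerical constant, both boundary estimates reduce to $|G(s)|\le 3\zeta(1+\eta)$. A direct computation shows $G$ has finite exponential order in the strip, comfortably below the Phragmén--Lindelöf threshold $e^{\pi|t|/(1+2\eta)}$, so the principle propagates the boundary bound to the interior. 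Unwinding the definitions of $G$ and $F$ recovers the claimed inequality for $|\zeta(s)|$.

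The main obstacle will be the explicit constant $3$. This forces the use of a non-asymptotic, uniform (in $t\in\R$) Stirling-type estimate for $\chi$ valid throughout the strip, not just for large $|t|$, and careful bookkeeping of the modulus of the complex power $(2\pi/(s+1))^{(1+\eta-s)/2}$ along each boundary line. Once these quantitative estimates are in hand, the Phragmén--Lindelöf step itself is routine in a strip of width $1+2\eta\le 2$.
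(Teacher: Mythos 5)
The paper offers no proof of this lemma: it is quoted verbatim from Rademacher \cite[Theorem 4]{Rad}, so the comparison is with Rademacher's own argument, whose overall strategy (Dirichlet-series bound on $\Re(s)=1+\eta$, functional equation plus a uniform explicit Stirling estimate on $\Re(s)=-\eta$, weighted Phragm\'en--Lindel\"of interpolation) you have correctly identified. However, your interpolation step has a genuine gap. The modulus of your comparison factor is
\begin{equation*}
\Bigl|\bigl(\tfrac{2\pi}{s+1}\bigr)^{\frac{1+\eta-s}{2}}\Bigr|
=\Bigl(\tfrac{2\pi}{|s+1|}\Bigr)^{\frac{1+\eta-\sigma}{2}}e^{-\frac{t}{2}\arg(s+1)},
\end{equation*}
and the second factor cannot be ``absorbed into a single numerical constant'': since $t\arg(s+1)\ge 0$ and $\arg(s+1)\to\pm\pi/2$ as $t\to\pm\infty$, it decays like $e^{-\pi|t|/4}$. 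On the boundary lines this only helps, so your bound $|G|\le 3\zeta(1+\eta)$ there is fine; but when you unwind $F(s)=G(s)\bigl(\tfrac{s+1}{2\pi}\bigr)^{\frac{1+\eta-s}{2}}$ in the interior you reintroduce the reciprocal factor $e^{+\frac{t}{2}\arg(s+1)}$, which grows like $e^{\pi|t|/4}$, and the resulting bound for $|\zeta(s)|$ is off by an exponentially large factor. No choice of branch repairs this; it is the intrinsic difficulty of Phragm\'en--Lindel\"of with polynomially weighted boundary data, and it is exactly what Rademacher's Theorems 1--2 in the cited paper are written to overcome.

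The standard repair is to drop the holomorphic comparison function and argue with a superharmonic majorant: $\log|F|$ is subharmonic, while $u(s)=\frac{1+\eta-\sigma}{2}\log\frac{|1+s|}{2\pi}$ satisfies $\Delta u=-\Re\frac{1}{1+s}<0$ throughout the strip because $\Re(1+s)=1+\sigma\ge 1-\eta>0$, so $u$ is superharmonic there (this uses crucially that the exponent $\tfrac12+\eta$ on the left edge exceeds the exponent $0$ on the right edge, so the exponent decreases in $\sigma$). Adding the harmonic linear interpolation of the two constant boundary bounds gives a superharmonic function dominating $\log|F|$ on both edges, and Phragm\'en--Lindel\"of for subharmonic functions of finite order yields the clean interior bound; equivalently, you may simply invoke Rademacher's Theorem 2. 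Two smaller points: the admissible growth for Phragm\'en--Lindel\"of in a strip of width $1+2\eta$ is $O\bigl(\exp(e^{k|t|})\bigr)$ with $k<\pi/(1+2\eta)$, not $e^{\pi|t|/(1+2\eta)}$ (harmless here, as $\zeta$ has polynomial growth in the strip); and the explicit uniform Gamma-ratio estimate on $\sigma=-\eta$, which you defer, is precisely where the constant $3$ is manufactured, so the proof is not complete without it.
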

The next lemma is an explicit version of van der Corput's subconvexity bound for $\zeta$ on the critical line, recently proven by Hiary.  
\cite{Hiary}. 
\begin{lem}\label{zetahalf}  
We have 
\begin{align}
&  \label{zetaonehalfbd}
  \ |\zeta(\tfrac{1}{2}+it)|\le a_1 t^{\frac16}\log t &\ \text{for all }\ t\ge 3,\\
&
  \label{maxzeta}
\max_{|t|\le T}|\zeta(\tfrac{1}{2}+it)| \le a_1 T^{\frac16}\log T+a_2   &\ \text{for all }\ T>0,
 \end{align}
 with
 \begin{equation}\label{def-a1-a2}
a_1  = 0.63 \text{ and } a_2  = 2.851.
\end{equation}
\end{lem}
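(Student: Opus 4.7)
My plan is to treat the two inequalities separately, since the first is cited and the second is what really needs proof.

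For \eqref{zetaonehalfbd}, there is essentially nothing to do: this is the content of Hiary's explicit subconvexity estimate in \cite{Hiary}. I would simply quote the theorem, verifying that the constant $a_1 = 0.63$ and the range $t \ge 3$ match Hiary's statement. If Hiary's published form has a slightly different shape (say a different lower threshold, or a secondary term), I would absorb the discrepancy by a short monotonicity argument on $t^{1/6}\log t$, which is increasing on $[1,\infty)$.

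For \eqref{maxzeta}, my approach is a cutoff at $t_0 = 3$, matching the range of validity of \eqref{zetaonehalfbd}. On the range $3 \le |t| \le T$ (when $T \ge 3$), I apply \eqref{zetaonehalfbd} and the monotonicity of $t \mapsto t^{1/6}\log t$ to get $|\zeta(\tfrac12+it)| \le a_1 T^{1/6}\log T$. On the range $|t| < 3$, I would establish a direct numerical bound $|\zeta(\tfrac12+it)| \le a_2 = 2.851$. Combining the two cases via $\max(A,B) \le A+B$ for nonnegative $A,B$ yields the stated form when $T \ge 3$. For $1 \le T < 3$ the term $a_1 T^{1/6}\log T$ is still nonnegative and the bound follows from the direct numerical estimate alone. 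For $T < 1$ the term $a_1 T^{1/6}\log T$ is negative, but $\sup_{|t|\le T}|\zeta(\tfrac12+it)|$ is comfortably below the value of $a_2 + a_1 T^{1/6}\log T$ because $|\zeta(\tfrac12+it)|$ is close to $|\zeta(\tfrac12)| \approx 1.46$ on such a small interval. A brief case check suffices.

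The only genuinely substantive step is the verification $\sup_{|t|\le 3}|\zeta(\tfrac12+it)| \le 2.851$, and this is the obstacle in the sense that it requires a rigorous numerical computation rather than an analytic argument. I would do this with validated interval arithmetic (e.g.\ Arb), evaluating $\zeta(\tfrac12+it)$ on a fine mesh and controlling the excursion between mesh points via an explicit upper bound on $|\zeta'(\tfrac12+it)|$ in the same range; alternatively one could use a truncated Euler--Maclaurin expansion with explicit remainder. Either route is standard but must be executed carefully so that the constant $2.851$ is genuinely an upper bound and not merely a heuristic value. Everything else is bookkeeping, and no new analytic input beyond Hiary's theorem is required.
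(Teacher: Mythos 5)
Your decomposition is the same as the paper's: quote Hiary for \eqref{zetaonehalfbd}, use monotonicity of $t^{1/6}\log t$ for $3\le |t|\le T$, and handle $|t|\le 3$ and small $T$ by a numerical bound together with the negativity of $a_1T^{1/6}\log T$ for $T<1$. The one real divergence is the step you single out as ``the obstacle'': the paper does \emph{not} perform a fresh interval-arithmetic verification of $\sup_{|t|\le 3}|\zeta(\tfrac12+it)|$, because Hiary's Theorem 1.1 already supplies the explicit bound $|\zeta(\tfrac12+it)|\le 1.461$ on $[0,3]$. So the substantive computation you propose is unnecessary, and in fact the weaker bound $2.851$ you plan to verify would not suffice on its own: for $T<1$ you need $\sup_{|t|\le T}|\zeta(\tfrac12+it)|\le a_2+a_1T^{1/6}\log T$, and the right-hand side attains its minimum at $T=e^{-6}$ with value $a_2-6a_1/e\approx 1.4604$, barely above $|\zeta(\tfrac12)|\approx 1.46035$. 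Your phrase ``comfortably below'' therefore overstates the margin --- it is of order $10^{-4}$, and the constant $a_2=2.851$ is engineered precisely so that $a_2+a_1T^{1/6}\log T$ clears the value of $|\zeta|$ near $t=0$; making this rigorous requires exactly the explicit computation of the minimum of $t^{1/6}\log t$ at $t=e^{-6}$ that the paper records, plus a bound on $|\zeta(\tfrac12+it)|$ near $0$ sharp to three decimal places (which Hiary's $1.461$ provides, up to the last digit). With those two points repaired, your argument is correct and coincides with the paper's.
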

\begin{proof}[Proof of Lemma \ref{zetahalf}]
Statement \eqref{zetaonehalfbd} is \cite[Theorem 1.1]{Hiary}. 
For $T \in [0,3]$, \cite[Theorem 1.1]{Hiary} provides that $|\zeta(\tfrac12 +it)|\le 1.461$. 
We find that the minimum of the function $t^{\frac16}\log(t)$ occurs when $t=e^{-6}$. We require the polynomial $a_1t^{\frac16}\log(t)+a_2\ge1.461$, choosing $a_2$ as in the statement of the lemma achieves this.
\end{proof}
\subsection{Bounds for arithmetic sums}
We list here some preliminary lemmas from \cite{Ram} providing estimates for finite arithmetic sums.
Let
\begin{equation} \label{def-bi}
\begin{split}
b_{1} = 0.62 ,  \ 
b_{2}  = 1.048,  \ 
b_{3}  = 0.605 , \ \text{and}\ 
b_{4}  = 0.529.  
\end{split}
\end{equation}
\begin{lem}  \label{mu2bd}
We have
\begin{align}
& \label{bound-sum-mu2}
 \sum_{n \le X} \mu^2(n) \le
    b_{1} X \ \text{ for all }\ X \ge 1700,
\\&
\label{bound2-sum-mu2/n} 
\sum_{n \le X} \frac{\mu^2(n)}{n} -\frac{6}{\pi^2}  \log X  \le b_{2}     \ \text{ for all }\ X\ge 1002.
\end{align}
\end{lem}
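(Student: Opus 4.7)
The plan is to derive both bounds from the identity $\mu^2(n)=\sum_{d^2\mid n}\mu(d)$, carefully tracking every constant rather than leaving $O$-terms.

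For \eqref{bound-sum-mu2}, swapping the order of summation gives
\[
\sum_{n\le X}\mu^2(n)=\sum_{d\le\sqrt{X}}\mu(d)\Big\lfloor\frac{X}{d^2}\Big\rfloor
=X\sum_{d\le\sqrt{X}}\frac{\mu(d)}{d^2}-\sum_{d\le\sqrt{X}}\mu(d)\Big\{\frac{X}{d^2}\Big\}.
\]
I would rewrite the first sum as $\tfrac{6}{\pi^2}-\sum_{d>\sqrt{X}}\mu(d)/d^2$ and bound the tail in absolute value via an integral comparison $\sum_{d>\sqrt{X}}d^{-2}\le X^{-1/2}$. The fractional-part sum is at most $\sqrt{X}$ trivially. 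Combined, these yield an explicit inequality of the form $\sum_{n\le X}\mu^2(n)\le \tfrac{6}{\pi^2}X+c_0\sqrt{X}$ for a concrete $c_0$. Dividing by $X$, one checks that $\tfrac{6}{\pi^2}+c_0/X^{1/2}\le 0.62$ for $X$ above some threshold $X_0$; any remaining gap down to $X=1700$ is closed by a direct computation of $\sum_{n\le X}\mu^2(n)$.

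For \eqref{bound2-sum-mu2/n}, I would start from
\[
\sum_{n\le X}\frac{\mu^2(n)}{n}=\sum_{d\le\sqrt{X}}\frac{\mu(d)}{d^2}\sum_{m\le X/d^2}\frac{1}{m}
\]
and substitute an explicit harmonic-sum estimate $\sum_{m\le Y}1/m=\log Y+\gamma+E(Y)$ with a numerical bound $|E(Y)|\le 1/(2Y)$ or similar. Rearranging, the leading term becomes $\tfrac{6}{\pi^2}\log X$, with additional constant contributions involving $\gamma$ and the convergent series $\sum_d \mu(d)\log d/d^2$ (which is computable from the derivative of $1/\zeta$ at $s=2$, namely $-\zeta'(2)/\zeta(2)^2$). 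Tails of the form $\sum_{d>\sqrt{X}}$ are bounded explicitly by integral comparisons, and a finite enumeration handles the range from $X=1002$ up to the analytic threshold.

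The main obstacle is quantitative rather than conceptual: every implied constant in the classical derivation must be replaced by a sharp numerical value, and the inequalities must be tight enough to reach the precise cutoffs $1700$ and $1002$ without inflating $b_1=0.62$ or $b_2=1.048$. This typically forces the hybrid of an explicit analytic bound valid for $X\ge X_0$ with a finite computer verification for smaller $X$, exactly the style pioneered in Ramar\'e's memoir \cite{Ram} from which the lemma is borrowed. In practice I would follow Ramar\'e's treatment closely rather than attempt to sharpen the constants further, since the downstream theorem only needs the stated numerical values.
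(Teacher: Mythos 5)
The paper does not actually prove this lemma: both inequalities are quoted verbatim from Ramar\'e's paper \cite{Ram} (his Lemmas 3.1 and 3.4), so there is no internal argument to compare against. Your sketch is the natural direct derivation via $\mu^2(n)=\sum_{d^2\mid n}\mu(d)$ and is sound in outline; it is consistent with, and essentially reconstructs, the kind of argument the citation points to. Two quantitative caveats are worth flagging, since the whole content of the lemma is numerical. First, for \eqref{bound-sum-mu2} the gap $b_1-\tfrac{6}{\pi^2}\approx 0.0121$ together with your error term $c_0/\sqrt{X}$ with $c_0\approx 2$ pushes the analytic threshold to roughly $X\approx 2.7\cdot 10^4$, so the finite check covers $1700\le X\le 2.7\cdot 10^4$ --- easy. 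Second, and more seriously, for \eqref{bound2-sum-mu2/n} the limiting constant is $\tfrac{6}{\pi^2}\gamma-2\zeta'(2)/\zeta(2)^2\approx 1.044$, only about $0.004$ below $b_2=1.048$; your tail bound $\bigl|\log X\sum_{d>\sqrt{X}}\mu(d)/d^2\bigr|\le (\log X)/\sqrt{X}$ is about $0.22$ at $X=1002$ and does not drop below $0.004$ until $X$ is on the order of $10^7$. So either the finite verification must run to roughly $10^7$ (still machine-feasible, but far from a ``remaining gap'' closed by hand), or one must exploit sign information in the tail and in the harmonic-sum error rather than taking absolute values. Your proposal does not misstate anything, but it understates how much of the burden falls on this computational/sign-tracking step; as you note, in practice one simply cites Ramar\'e, which is exactly what the paper does.

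One minor point of bookkeeping: the series you invoke satisfies $\sum_{d\ge 1}\mu(d)\log d/d^2=\zeta'(2)/\zeta(2)^2$ (the derivative of $1/\zeta$ at $2$ is $-\zeta'(2)/\zeta(2)^2=-\sum_d\mu(d)\log d/d^{2}$), so keep the sign straight when it enters with the factor $-2$ from $\log(X/d^2)$.
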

\eqref{bound-sum-mu2} is \cite[Lemma 3.1]{Ram} 
 and \eqref{bound2-sum-mu2/n} is \cite[Lemma 3.4]{Ram}. 
\begin{lem} \label{lambdaXbd}
Let $\tau >1, \del >0$, $X \ge 10^9$, and  $\gamma$ denotes Euler's constant. 
Then
\begin{align}
&   \label{lambdaXn5X}
  \sum_{X < n < 5X} \frac{\lambda_X(n)^2}{n^2} \le 
  \frac{b_{3} }{X}, 
\\
&
  \label{lambdaXn1}
  \sum_{n \ge1} \frac{\lambda_X(n)^2}{n^{\tau}} \le \frac{b_{4}  \tau^2}{\tau-1} e^{\gamma(\tau-1)} \log X,
\\&
 \label{lambdaXntau1}
  \sum_{n \ge1} \frac{\lambda_X(n)^2}{n^{1+\frac{\del }{\log X}}} \le \frac{b_{4} }{\del } 
  \Big(1+\frac{\del }{\log X} \Big)^2e^{\frac{\del \gamma}{\log X}} (\log X)^2 ,  
\\&
  \label{lambdaXntau}
  \sum_{n \ge1} \frac{\lambda_X(n)^2}{n^{2+\frac{2\del }{\log X}}} 
\le 
\frac{b_{4} }{5\del  e^{\del} } 
  \Big(1+\frac{{\del} }{\log X} \Big)^2 e^{\frac{{\del} (\gamma-\log 5)}{\log X}} \frac{(\log X)^2}{ X}
  +
\frac{b_{3} e^{-2{\del} }}{X}.
\end{align}
\end{lem}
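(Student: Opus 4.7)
These bounds are organized so that (3) is immediate from (2) and (4) follows from (1) and (3) by an elementary splitting; the substance lies in establishing (1) and (2), following Ramar\'e \cite{Ram}.

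For (2), expand the square and swap the order of summation. Introducing the auxiliary $\tilde\lambda_X(n)=\sum_{d\mid n,\,d\le X}\mu(d)$ for all $n\ge 1$ (which coincides with $\lambda_X(n)$ except that $\tilde\lambda_X(1)=1$), absolute convergence for $\tau>1$ yields the Dirichlet-series identity
\begin{equation*}
\sum_{n\ge 1} \frac{\tilde\lambda_X(n)^2}{n^{\tau}} = \zeta(\tau) \sum_{d_1,d_2\le X} \frac{\mu(d_1)\mu(d_2)}{[d_1,d_2]^{\tau}}.
\end{equation*}
Dropping the $n=1$ contribution on the left gives $\sum_n \lambda_X(n)^2/n^{\tau}\le \zeta(\tau)\sum_{d_1,d_2\le X}\mu(d_1)\mu(d_2)/[d_1,d_2]^{\tau}$. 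Parametrizing by $e=(d_1,d_2)$ and writing $d_i=em_i$ with $e,m_1,m_2$ pairwise coprime and squarefree factors the double sum as $\sum_{e\le X}\mu^2(e)e^{-\tau}\bigl(\sum_{m\le X/e,\,(m,e)=1}\mu(m)m^{-\tau}\bigr)^2$. Bounding the inner sum via partial summation from \eqref{bound2-sum-mu2/n} and combining with $\zeta(\tau)\le \tau/(\tau-1)$ produces the target with constant $b_{4}$.

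For (1), exploit the identity $\lambda_X(n)=-\sum_{d\mid n,\,d>X}\mu(d)$, valid for $n>X$. On the short range $X<n<5X$, the condition $d\mid n$ with $d>X$ forces $n/d<5$, so only $n/d\in\{1,2,3,4\}$ contributes. A Cauchy--Schwarz rearrangement as in \cite{Ram} gives $\lambda_X(n)^2 \le 4\sum_{k=1}^{4}\mathds{1}_{k\mid n,\,n/k>X}\mu^2(n/k)$; substituting $n=km$ reduces the problem to a finite combination of truncated $\sum \mu^2(m)/m^2$ pieces, each estimated via \eqref{bound-sum-mu2}, yielding the constant $b_{3}$.

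For (3), substitute $\tau = 1 + \del/\log X$ into (2) and use $\tau-1 = \del/\log X$, $\tau^2/(\tau-1) = (1+\del/\log X)^2\log X/\del$, and $e^{\gamma(\tau-1)} = e^{\gamma\del/\log X}$. For (4), split the sum at $n=5X$. On $X<n<5X$, factor out $n^{-2\del/\log X}\le X^{-2\del/\log X} = e^{-2\del}$ and apply (1), producing the term $b_{3} e^{-2\del}/X$. On $n\ge 5X$, use the pointwise inequality
\begin{equation*}
\frac{1}{n^{2+2\del/\log X}} \le \frac{1}{(5X)^{1+\del/\log X}}\cdot\frac{1}{n^{1+\del/\log X}},
\end{equation*}
apply (3), and simplify $(5X)^{-1-\del/\log X} = e^{-\del}\,5^{-\del/\log X}/(5X)$ to recognize the first term of (4). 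The main obstacle is the explicit tracking of constants through the double M\"obius sum and the partial summation in (2): the bounds of Lemma \ref{mu2bd} must be combined with tight handling of the coprimality conditions in order to land on $b_{4}=0.529$. With (1) and (2) in hand, (3) and (4) are purely algebraic.
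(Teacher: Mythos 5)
Your derivations of \eqref{lambdaXntau1} and \eqref{lambdaXntau} coincide exactly with the paper's proof: the same substitution $\tau=1+\delta/\log X$ into \eqref{lambdaXn1}, the same split of \eqref{lambdaXntau} at $n=5X$, the same observation that the short range carries a factor $X^{-(\tau-2)}=e^{-2\delta}$ against \eqref{lambdaXn5X}, and the same pointwise inequality $n^{-\tau}\le (5X)^{-1-\delta/\log X}n^{-1-\delta/\log X}$ on the tail. That part is correct and complete.

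The divergence is in \eqref{lambdaXn5X} and \eqref{lambdaXn1}: the paper does not prove these, it imports them verbatim as Lemmas 5.6 and 5.5 of Ramar\'e \cite{Ram}, whereas you sketch proofs. Your Dirichlet-series identity $\sum_{n}\tilde\lambda_X(n)^2 n^{-\tau}=\zeta(\tau)\sum_{d_1,d_2\le X}\mu(d_1)\mu(d_2)[d_1,d_2]^{-\tau}$ is correct and is the standard starting point for \eqref{lambdaXn1}, but the passage from there to the specific constant $b_4=0.529$ with the factor $e^{\gamma(\tau-1)}$ is only asserted, not carried out. More concretely, your route to \eqref{lambdaXn5X} cannot deliver $b_3=0.605$: the step $\lambda_X(n)^2\le 4\sum_{k=1}^{4}\mathds{1}_{k\mid n,\,n/k>X}\,\mu^2(n/k)$, combined with $\sum_{m>X}\mu^2(m)/m^2\le 2b_1/X$ from \eqref{bound-sum-mu2}, gives a bound of about $8b_1\bigl(1+\tfrac14+\tfrac19+\tfrac1{16}\bigr)/X\approx 7.1/X$, an order of magnitude too large. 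Since already the diagonal contribution $\sum_{X<n<5X}\mu^2(n)/n^2\approx \tfrac{24}{5\pi^2 X}\approx 0.49/X$ is most of $b_3/X$, the stated constant requires expanding the square and controlling the off-diagonal terms individually rather than a Cauchy--Schwarz inequality that multiplies everything by $4$. If your intent is to cite (1) and (2) from \cite{Ram} as the paper does, your proof is fine; if your intent is to prove them, the argument for \eqref{lambdaXn5X} as written does not produce the claimed constant.
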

\begin{proof} 
\eqref{lambdaXn5X} is \cite[Lemma 5.6]{Ram} and 
\eqref{lambdaXn1} is \cite[Lemma 5.5]{Ram}.
\eqref{lambdaXntau1} is a direct consequence of \eqref{lambdaXn1}, taking $\tau=1+\frac{{\del} }{\log X}$. 
\newline
For \eqref{lambdaXntau} we set $\tau=2+\frac{2{\del} }{\log X}$. Since $\lambda_X(n)^2=0$ when $1\le n \le X$, 
then
$$
    \sum_{n \ge1} \frac{\lambda_X(n)^2}{n^{\tau}}
    = \sum_{X < n < 5X} \frac{\lambda_X(n)^2}{n^{\tau}} +
    \sum_{n \ge 5X} \frac{\lambda_X(n)^2}{n^{\tau}}.
$$
Since $\tau \ge 2$, we use \eqref{lambdaXn5X} and find that the first sum is 
$$
   \le \frac{1}{X^{\tau-2}}  \sum_{X < n < 5X} \frac{\lambda_X(n)^2}{n^2}  \le 
    \frac{1}{X^{\tau-2}} \frac{b_{3} }{X} 
    =\frac{b_{3} e^{-2{\del} }}{X}. 
$$
We bound the second sum using $n^{\tau} \ge (5X)^{1+\frac{{\del} }{\log X}} n^{1+\frac{{\del} }{\log X}}$ and \eqref{lambdaXntau1}. We find that it is
$$
    \le \frac{1}{(5X)^{1+\frac{{\del} }{\log X}}}    \frac{b_{4} }{{\del} } \Big(1+\frac{{\del} }{\log X} \Big)^2 e^{\frac{{\del} \gamma}{\log X}} (\log X)^2.
$$
Combining bounds
\[
 \sum_{n \ge1} \frac{\lambda_X(n)^2}{n^{\tau}} 
      \le \frac{b_{4} }{5{\del}  e^{\del} } \Big(1+\frac{{\del} }{\log X} \Big)^2 e^{\frac{{\del} (\gamma-\log 5)}{\log X}} \frac{(\log X)^2}{ X} 
+ \frac{b_{3} e^{-2{\del} }}{X}.
\]
\end{proof}
\begin{lem}\label{divisorsum} Let $\tau>1$ and $\gamma$ is Euler's constant.  Then for $X \ge 1$,
\begin{equation}
  \label{divisorsum1}
\sum_{n\ge X}\frac{d(n)}{n^\tau}\le\frac{\tau}{X^{\tau-1}}\left(\frac{\log X}{\tau-1}+\frac1{(\tau-1)^2}+\frac{\gamma}{\tau-1}+\frac7{12\tau X}\right) 
\\
\end{equation}
and for $X \ge 47$, 
\begin{equation}
  \label{divisorsum2}
\sum_{n\ge X}\frac{d(n)^2}{n^\tau}\le \frac{2 \tau}{X^{\tau-1}} \Big(
\frac{(\log X)^3}{\tau-1}+ \frac{3 \log^2 X}{(\tau-1)^2}+ \frac{6 \log X}{(\tau-1)^3}
  + \frac{6}{(\tau-1)^4} \Big).
\end{equation}
\end{lem}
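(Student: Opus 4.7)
The strategy for both bounds is identical: apply Abel summation to rewrite the tail sum as an integral against the partial sum $D_r(t) := \sum_{n\le t} d(n)^r$ (with $r=1$ for \eqref{divisorsum1}, $r=2$ for \eqref{divisorsum2}), insert an explicit upper bound for $D_r(t)$, and evaluate the resulting elementary integral in closed form.

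For \eqref{divisorsum1}, Abel summation together with the crude estimate $D(t) = O(t\log t)$ and the hypothesis $\tau > 1$ gives
\[
\sum_{n\ge X}\frac{d(n)}{n^\tau}
  \;=\; -\,\frac{D(X-1)}{X^\tau}+\tau\int_X^\infty\frac{D(t)}{t^{\tau+1}}\,dt
  \;\le\;\tau\int_X^\infty\frac{D(t)}{t^{\tau+1}}\,dt.
\]
The hyperbola identity $D(t)=\sum_{a\le t}\lfloor t/a\rfloor$ combined with the sharp harmonic bound $\sum_{a\le n}1/a \le \log n+\gamma+\tfrac{1}{2n}$ yields $D(t)\le t\log t+\gamma t + t/(2\lfloor t \rfloor)$, and a short computation shows that $\sup_{t\ge 6}t/(2\lfloor t\rfloor)=7/12$ (approached as $t\to 7^-$); the remaining range $1\le t<6$ is treated by a direct check on each interval $[n,n+1)$, producing the uniform inequality $D(t)\le t\log t+\gamma t+\tfrac{7}{12}$. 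Splitting the integral into $\tau\int_X^\infty \tfrac{\log t}{t^\tau}\,dt$, $\tau\gamma\int_X^\infty \tfrac{dt}{t^\tau}$, and $\tfrac{7\tau}{12}\int_X^\infty \tfrac{dt}{t^{\tau+1}}$, and evaluating each (the first via a single integration by parts) produces exactly \eqref{divisorsum1}.

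For \eqref{divisorsum2}, the same Abel-summation step yields
\[
\sum_{n\ge X}\frac{d(n)^2}{n^\tau}\;\le\;\tau\int_X^\infty\frac{D_2(t)}{t^{\tau+1}}\,dt.
\]
The decisive input is the explicit inequality $D_2(t)\le 2t(\log t)^3$ valid for $t\ge 47$. I would establish this via the pointwise bound $d(n)^2\le d_4(n)$, which at prime powers reduces to $(k+1)^2 \le \binom{k+3}{3}$ and is trivially true for $k\ge 1$, combined with an explicit estimate for $\sum_{n\le t}d_4(n)$; alternatively, one can invoke Ramanujan's asymptotic $\sum_{n\le t}d(n)^2 \sim \pi^{-2}t(\log t)^3$ together with a numerical verification down to $t=47$. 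Setting $I_k := \int_X^\infty (\log t)^k/t^\tau\,dt$, iterated integration by parts gives the recursion $I_k = \tfrac{(\log X)^k}{(\tau-1)X^{\tau-1}}+\tfrac{k}{\tau-1}\,I_{k-1}$ starting from $I_0=1/((\tau-1)X^{\tau-1})$. Unwinding to $k=3$ produces the four stated terms, and the factor of $2$ inherited from the bound on $D_2$ reproduces \eqref{divisorsum2}.

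The main technical obstacle is obtaining the explicit constant in $D_2(t)\le 2t(\log t)^3$: the asymptotic leading constant $1/\pi^2\approx 0.101$ sits well below $2$, but converting this asymptotic into a uniform inequality from $t=47$ onwards requires careful control of the secondary terms in Ramanujan's expansion, or alternatively an explicit handle on $D_4(t)=\sum_{n\le t}d_4(n)$ via the pointwise route $d(n)^2\le d_4(n)$. By contrast, the $7/12$ constant in the first bound is produced by a genuinely elementary computation, modulo bookkeeping with the floor function and a finite numerical check in the range $1\le t<6$.
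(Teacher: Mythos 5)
Your proof follows essentially the same route as the paper: partial summation against the partial sums of $d(n)$ and $d(n)^2$, insertion of explicit bounds for those partial sums, and closed-form evaluation of the resulting integrals $\int_X^\infty (\log t)^k t^{-\tau}\,dt$. The one input you leave unfinished --- the explicit inequality $\sum_{n\le t}d(n)^2\le 2t(\log t)^3$ for $t\ge 47$ --- is exactly the point the paper resolves by citation: it invokes Gowers' elementary bound $\sum_{n\le t}d(n)^2\le t(\log t+1)^3$, and the threshold $47$ is precisely where $(\log t+1)^3\le 2(\log t)^3$ begins to hold; either of the routes you sketch (via $d(n)^2\le d_4(n)$ and a hyperbola-method induction, or via an asymptotic plus finite check) would also work, but you should carry one out or cite a reference rather than leave it as an obstacle. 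Your self-contained derivation of $\sum_{n\le t}d(n)\le t\log t+\gamma t+\tfrac{7}{12}$ is correct (the paper instead cites Ramar\'e), noting only that for $1\le t<6$ you do need the direct interval-by-interval check you mention, since $t/(2\lfloor t\rfloor)$ alone can exceed $\tfrac{7}{12}$ there.
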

\begin{proof}
By partial summation, we have
\begin{align*}
\sum_{n\ge X}\frac{d(n)}{n^{\tau}}\le \tau\int_X^{\infty}\frac{\sum_{n \le t} d(n)}{t^{\tau+1}}dt.
\end{align*}
Using $\sum_{n \le t} d(n) \le t(\log t+\gamma+\frac{7}{12t})$, for $t \ge 1$, which follows from  \cite[Equation 3.1]{Ram0}, we have
\begin{align*}
\sum_{n\ge X}\frac{d(n)}{n^{\tau}}\le \tau \left(\int_X^{\infty}\frac{\log t}{t^{\tau}}dt+\gamma\int_X^{\infty}\frac{dt}{t^{\tau}}+\frac7{12}\int_X^{\infty}\frac{dt}{t^{\tau+1}}\right).
\end{align*}
By applying the integrals
\[\int_X^{\infty}\frac{\log t}{t^c}dt=\frac{\log X}{(c-1)X^{c-1}}+\frac{1}{(c-1)^2X^{c-1}} \text{ and } \int_X^{\infty}\frac{dt}{t^c}=\frac1{(c-1)X^{c-1}}, \text{ where }  c>1,\]
we obtain \eqref{divisorsum1}.   The second estimate is similar.  We have 
\[\sum_{n\ge X}\frac{d(n)^2}{n^\tau}\le \tau \int_{X}^{\infty} \frac{ \sum_{n \le t} d(n)^2}{t^{\tau+1}} \, dt.
\]   
It suffices to use the elementary bound $ \sum_{n \le t} d(n)^2 \le t (\log t + 1)^3  
\le 2 t \log^3 t$ for $t \ge 47$, derived by Gowers \cite{Go}. 
Thus 
\[
  \sum_{n\ge X}\frac{d(n)^2}{n^\tau}\le 2\tau \int_{X}^{\infty} \frac{\log^3 t}{t^{\tau}} \, dt
  = 2 \tau \left( 
  \frac{\frac{(\log X)^3}{\tau-1}+ \frac{3 \log^2 X}{(\tau-1)^2}+ \frac{6 \log X}{(\tau-1)^3}
  + \frac{6}{(\tau-1)^4}}{X^{\tau-1}}
  \right).
\]
\end{proof}

\subsection{Mean value theorem for Dirichlet polynomials}
We require Montgomery and Vaughan's mean value theorem for Dirichlet polynomials 
in the form derived by Ramar\'e \cite{Ram}. 
\begin{lem}\label{ExplicitMV} 
Let $(u_n)$ be a real-valued sequence.
For every $T \ge 0$ we have
\begin{equation}\label{bound-MV-0T}
\int_0^T \Big| \sum_{n=1}^{\infty} u_nn^{it} \Big|^2dt\le \sum_{n \ge1}|u_n|^2(T+\pi m_0(n+1)), 
\end{equation}
with 
\begin{equation}\label{def-m0}
m_0=\sqrt{1+\frac23 \sqrt{\frac{6}{5}}}.
\end{equation}
Let $0 < T_1 < T_2$. Then 
\begin{equation}\label{bound-MV-T1T2}
\int_{T_1}^{T_2} |\sum_{n=1}^{\infty} u_nn^{it}|^2dt\le \sum_{n \ge1}|u_n|^2(T_2-T_1+2 \pi m_0(n+1)). 
\end{equation}
\end{lem}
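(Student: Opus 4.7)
The plan is to use the standard Montgomery--Vaughan technique: expand the square, handle the diagonal contribution directly, and bound the off-diagonal by a sharp Hilbert-type inequality. I would first approximate the (possibly infinite) series by a finite sum $\sum_{n \le N} u_n n^{it}$ and pass to the limit at the end, assuming convergence of $\sum |u_n|^2(n+1)$ (otherwise the inequality is trivially true). Squaring out gives
\[
\Big|\sum_{n \le N} u_n n^{it}\Big|^2 = \sum_{m,n \le N} u_m u_n (n/m)^{it},
\]
and integrating term by term over $[0,T]$ splits off a diagonal piece $T \sum u_n^2$ and an off-diagonal piece $E(T) = \sum_{m \ne n} u_m u_n \frac{(n/m)^{iT}-1}{i\log(n/m)}$.

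Next I would reduce $E(T)$ to a Hilbert-type sum. The constant ``$-1$'' contribution $\sum_{m\ne n} \frac{u_m u_n}{i\log(n/m)}$ vanishes by antisymmetry under $m \leftrightarrow n$ (since $u_n$ is real). Introducing $a_n = u_n n^{iT}$, the remaining piece equals $\sum_{m\ne n} \frac{a_n \bar{a_m}}{i(\log n - \log m)}$, which is exactly the setting of the Preissmann refinement of the Montgomery--Vaughan inequality with nodes $\lambda_n = \log n$. The gap function satisfies
\[
\delta_n = \min_{m \ne n} |\log n - \log m| \ge \log\!\Big(1+\tfrac{1}{n}\Big) \ge \frac{1}{n+1},
\]
so $1/\delta_n \le n+1$. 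Applying Preissmann's inequality with sharp constant $\pi m_0$ (where $m_0 = \sqrt{1 + \tfrac{2}{3}\sqrt{6/5}}$) then yields $|E(T)| \le \pi m_0 \sum u_n^2 (n+1)$. Adding this to the diagonal gives \eqref{bound-MV-0T}.

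For \eqref{bound-MV-T1T2}, the same expansion produces diagonal $(T_2-T_1)\sum u_n^2$ and off-diagonal terms weighted by $\int_{T_1}^{T_2}(n/m)^{it}\,dt = \frac{(n/m)^{iT_2} - (n/m)^{iT_1}}{i\log(n/m)}$. Here both numerator contributions yield non-vanishing Hilbert sums (neither can be dropped by antisymmetry), so one applies Preissmann's inequality separately to the two terms obtained by setting $a_n = u_n n^{iT_2}$ and $a_n = u_n n^{iT_1}$, picking up a factor of $2$. This gives the coefficient $2\pi m_0$ in place of $\pi m_0$.

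The main obstacle is the sharp constant $\pi m_0$: the crude triangle-inequality approach to the Hilbert sum yields much larger constants (e.g.\ $\tfrac{3\pi}{2}$ as in the original Montgomery--Vaughan bound). Obtaining $\pi m_0$ requires Preissmann's refined analysis of the extremal problem for Hilbert-type forms, which I would cite rather than reproduce. Everything else in the argument is routine once this constant is in hand.
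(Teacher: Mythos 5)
Your proposal is correct and is essentially the paper's own route: the paper proves the first inequality by citing Ramar\'e's Lemma 6.5, which is exactly the explicit Montgomery--Vaughan argument you unpack (diagonal term plus off-diagonal Hilbert-type sum, Preissmann's sharp constant $\pi m_0$, and the gap bound $1/\delta_n\le n+1$ from $\log(1+1/n)\ge 1/(n+1)$). Your derivation of the second inequality by splitting the off-diagonal into two Hilbert sums is equivalent to the paper's remark that it follows from two applications (upper and lower) of the first.
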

\begin{proof}
The inequality \eqref{bound-MV-0T} is \cite[Lemma 6.5]{Ram}, and \eqref{bound-MV-T1T2} follows by the same proof.
This argument is an explicit version of Corollary 3 of \cite{MV} which makes use of the main theorem of \cite{Pr}.
Note that \eqref{bound-MV-T1T2} follows from two applications of \eqref{bound-MV-0T}.
\end{proof}
\subsection{Choice for the smooth weight $g$}
\begin{lem}
\label{UpboundPhicor}
Let $ \alpha >0$ and $\beta=2$. Let $s=\s+it$ and let $g$ be as defined in \eqref{gdefn}:
\begin{equation}
g(s) = \frac{s-1}{s}e^{\a\left(\frac{s}{T}\right)^{2}}.                                
\end{equation}
Let  $\sigma_1=\frac12, \sigma_2>1$, and $H<T$. 
Define 
\begin{eqnarray}
& \label{def-omega1} \omega_1(\s,T,\a) &= e^{\a\left(\frac{\s}{T}\right)^2},\\
& \label{def-omega2} \omega_2(\sigma,T,\a)&=\left(1-\frac1{H}\right)e^{\a\left(\frac{\s}{T}\right)^2-\a}.
\end{eqnarray}
Then for $\frac12 \le \sigma \le \sigma_2$, $g$ satisfies \eqref{gupbd} and \eqref{glbd}:
\begin{align}
\label{expl-gupbd}
& |g (\sigma+it)| \le \omega_1(\s,T,\a)e^{-\a\left(\frac{|t|}{T}\right)^{2}}  & \text{ for  all } \ t,
\\ \label{expl-glbd}  \omega_2(\sigma,T,\a) \le & |g (\sigma+it)|  & \text{ for }\ H \le t \le T.
\end{align}
\end{lem}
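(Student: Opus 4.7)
The proof of this lemma should factor naturally into controlling two independent pieces of $g(s)$: the rational factor $(s-1)/s$ and the exponential factor $e^{\alpha(s/T)^2}$. The plan is to treat the exponential factor exactly, then combine it with an easy upper bound for $|(s-1)/s|$ in one direction and a reverse-triangle-inequality lower bound in the other.

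First I would compute the modulus of the exponential factor. Writing $s = \sigma + it$, we have $s^2 = \sigma^2 - t^2 + 2i\sigma t$, so
\begin{equation*}
 \bigl|e^{\alpha(s/T)^2}\bigr| = e^{\alpha(\sigma^2 - t^2)/T^2} = e^{\alpha(\sigma/T)^2}\, e^{-\alpha(t/T)^2}.
\end{equation*}
This identity already contains both the $\omega_1$ factor and the $e^{-\alpha(|t|/T)^2}$ decay on the right-hand side of \eqref{expl-gupbd} (note $(|t|/T)^2 = (t/T)^2$), as well as the term $e^{\alpha(\sigma/T)^2 - \alpha}$ appearing in $\omega_2$ once we bound $t^2 \le T^2$ in the range $|t| \le T$.

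For the upper bound \eqref{expl-gupbd}, the remaining task is to show $|(s-1)/s| \le 1$ whenever $\sigma \ge \tfrac12$. This follows from
\begin{equation*}
 |s-1|^2 - |s|^2 = (\sigma-1)^2 - \sigma^2 = 1 - 2\sigma \le 0,
\end{equation*}
so the bound is valid for all $t$ and all $\sigma \in [\tfrac12, \sigma_2]$. Multiplying through by the exponential factor gives precisely $\omega_1(\sigma,T,\alpha)\, e^{-\alpha(|t|/T)^2}$.

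For the lower bound \eqref{expl-glbd}, valid on $H \le t \le T$, I would invoke the reverse triangle inequality in the form
\begin{equation*}
 \Bigl|\tfrac{s-1}{s}\Bigr| = \bigl|1 - \tfrac{1}{s}\bigr| \ge 1 - \tfrac{1}{|s|} \ge 1 - \tfrac{1}{|t|} \ge 1 - \tfrac{1}{H},
\end{equation*}
which is positive since $H \ge 1002$. For the exponential factor, $|t| \le T$ gives $e^{-\alpha(t/T)^2} \ge e^{-\alpha}$, and together these produce the stated $\omega_2(\sigma,T,\alpha) = (1-1/H)\, e^{\alpha(\sigma/T)^2 - \alpha}$. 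The calculations are entirely elementary; the only real point of care is making sure the ranges of $\sigma$ and $t$ invoked for each inequality are respected, and there is no genuine obstacle here.
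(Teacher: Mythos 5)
Your proposal is correct and follows essentially the same route as the paper: the exact computation $|e^{\alpha(s/T)^2}| = e^{\alpha(\sigma^2-t^2)/T^2}$, the bound $|(s-1)/s|\le 1$ for $\sigma\ge\tfrac12$, and the reverse triangle inequality $|1-1/s|\ge 1-1/|t|\ge 1-1/H$ on $H\le t\le T$. No gaps.
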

\begin{proof} 
Since  $\s\ge \frac12 $, we have
$ 
\left|\frac{s-1}{s}\right|^2= 1-\frac{2\s-1}{\s^2+t^2} \le 1 .
$ 
Thus $
|g (s)|\le | e^{\a(\frac{s}{T})^2} | 
= e^{\frac{\a\s^2}{T^2}}e^{\frac{-\a t^2}{T^2}}$ and we have the expression for $\omega_1(\s,T,\a)$.  \\
In addition,
$  |\frac{s-1}{s}|=|1-\frac1{s} |\ge 1-\frac{1}{|s|} \ge 1-\frac1{|t|}$, so for all  $t \in [H,T]$, we have
\[
|g (s)|  \ge  \left(1-|t|^{-1}\right) e^{\frac{\a\s^2}{T^2}}e^{\frac{-\a t^2}{T^2}}  \ge (1-H^{-1})  e^{\frac{\a\s^2}{T^2}}  e^{-\a},
\]
which gives $\omega_2(\s,T,\a)$. 
\end{proof}
\section{Proof of the Main Theorem}
Unless specified in the rest of the article, we set $ H_0=3.0610046 \cdot 10^{10}$ 
and we have the following conditions on the parameters $k, \sigma_1, \delta$, and $\s_2$:
\begin{align}
 \label{def-parameters}
 & k\ge \frac{10^9}{H_0}, \  \sigma_1 = \frac12,\ \delta>0,\ \text{and}\ \s_2=1+\frac{{\del} }{\log X}.
\end{align}
\subsection{Bounding  $F_X(\s,T)$}
We establish here some preliminary lemmas to estimate $F_X(\s,T)$ at $\frac{1}{2}$ and at $1+\frac{{\del} }{\log X}$. 
\subsubsection{Bounding $F_X(\frac12,T)$}
We first need to bound the second moment of $M_X(\frac12+it)$, where $M_X$ is defined in \eqref{def-Mxs}.
\begin{lem}\label{bndMX1/2}
Let $T > 0$, $X \ge kH_0$, and $k$ satisfies \eqref{def-parameters}.  
Then
\begin{equation}\label{bound-moment-M-1/2}
 \int_0^T\left|M_X(\tfrac12+it)\right|^2dt\le(C_1T+C_2X) (\log X),
\end{equation}
where
\begin{align}
 \label{def-C1} C_1 & = C_1(k) =
\frac{6}{\pi^2} 
 +  \frac{b_{2}}{\log (kH_0)},\\
\label{def-C2} C_2 & = C_2(k) =
\frac{\pi m_0 b_{1} }{\log (kH_0)}+ \frac{6 m_0}{\pi kH_0} 
 + \frac{\pi m_0 b_{2}}{kH_0 \log (kH_0)},
\end{align}
and the $b_i$'s are defined in \eqref{def-bi} and $m_0$ in \eqref{def-m0}.
\end{lem}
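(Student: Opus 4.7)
\smallskip

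The plan is to view $M_X(\tfrac12+it)=\sum_{n\le X}\frac{\mu(n)}{\sqrt n}\,n^{-it}$ as a Dirichlet polynomial of length $X$, apply the Montgomery--Vaughan mean value theorem (Lemma \ref{ExplicitMV}) with coefficients $u_n=\mu(n)/\sqrt n$ (and $u_n=0$ for $n>X$), and then insert the explicit bounds of Lemma \ref{mu2bd} for the resulting arithmetic sums. Finally, use $X\ge kH_0$ to absorb the lower-order terms into $C_1\log X$ and $C_2 X\log X$.

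Concretely, first I would apply \eqref{bound-MV-0T} to get
\begin{equation*}
\int_0^T \bigl|M_X(\tfrac12+it)\bigr|^2\,dt
\;\le\;\sum_{n\le X}\frac{\mu^2(n)}{n}\bigl(T+\pi m_0(n+1)\bigr)
\;=\;T\!\!\sum_{n\le X}\frac{\mu^2(n)}{n}+\pi m_0\!\!\sum_{n\le X}\mu^2(n)+\pi m_0\!\!\sum_{n\le X}\frac{\mu^2(n)}{n}.
\end{equation*}
Since $X\ge kH_0\ge 10^9\ge 1700\ge 1002$, both halves of Lemma \ref{mu2bd} apply: the first and third sums are bounded by $\frac{6}{\pi^2}\log X+b_2$ and the middle sum by $b_1X$. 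This yields
\begin{equation*}
\int_0^T\bigl|M_X(\tfrac12+it)\bigr|^2\,dt
\;\le\; T\Bigl(\tfrac{6}{\pi^2}\log X+b_2\Bigr)+\pi m_0 b_1 X+\pi m_0\Bigl(\tfrac{6}{\pi^2}\log X+b_2\Bigr).
\end{equation*}

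The last step is purely bookkeeping: to produce the shape $(C_1T+C_2X)\log X$, I would factor $\log X$ out of the $T$-coefficient by using $b_2=\frac{b_2}{\log X}\cdot\log X\le \frac{b_2}{\log(kH_0)}\cdot\log X$, giving the claimed $C_1$. For the $X$-coefficient one bounds the three remaining terms by $C_2X\log X$ term by term: $\pi m_0 b_1 X\le \frac{\pi m_0 b_1}{\log(kH_0)}X\log X$, $\frac{6m_0}{\pi}\log X\le \frac{6m_0}{\pi kH_0}\,X\log X$ (using $X\ge kH_0$), and $\pi m_0 b_2\le \frac{\pi m_0 b_2}{kH_0\log(kH_0)}X\log X$ (using $X\log X\ge kH_0\log(kH_0)$). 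Summing these three inequalities produces exactly the definition \eqref{def-C2} of $C_2$, completing the proof.

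There is no genuine obstacle here: the proof is an application of the mean value theorem followed by known estimates. The only delicate point is verifying that the hypothesis $X\ge kH_0$ (with $kH_0\ge 10^9$) is strong enough to simultaneously satisfy $X\ge 1700$ (needed for \eqref{bound-sum-mu2}), $X\ge 1002$ (needed for \eqref{bound2-sum-mu2/n}), and to let the three lower-order terms be absorbed into $C_2X\log X$ with the clean expressions given in \eqref{def-C2}. Once these monotonicity checks are made, the constants $C_1$, $C_2$ emerge directly.
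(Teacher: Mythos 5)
Your proof is correct and follows essentially the same route as the paper: Montgomery--Vaughan applied to $u_n=\mu(n)/\sqrt{n}$, the two estimates of Lemma \ref{mu2bd}, and then factoring out $\log X$ and using $X\ge kH_0$ to arrive at the stated $C_1$ and $C_2$. The only cosmetic difference is that the paper keeps the grouping $(T+\pi m_0)\bigl(\tfrac{6}{\pi^2}\log X+b_2\bigr)+\pi m_0 b_1 X$ before factoring, whereas you split off the $\pi m_0$ piece first; the resulting constants are identical.
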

\begin{proof}
We apply \eqref{bound-MV-0T} to $u_n = \frac{\mu(n)}{n^{\frac12 }}$:
\[\int_0^T\left|M_X\left(\tfrac12+it\right)\right|^2dt\le \sum_{n\le X}\frac{\mu^2(n)}n(T + \pi m_0(n+1)).\]
Since $X\ge 1700$, we apply \eqref{bound-sum-mu2} to $(T + \pi m_0) \sum_{n\le X}\frac{\mu^2(n)}n $
and \eqref{bound2-sum-mu2/n}  to $(\pi m_0) \sum_{n\le X} \mu^2(n) $ respectively.
We factor $\log X$ to give 
\begin{align*}
\int_0^T\left|M_X\left(\tfrac12+it\right)\right|^2dt
& =
(T + \pi m_0) \left(\frac6{\pi^2}\log X+b_{2}  \right)
+ \pi m_0 b_{1} X
\\ & =
\left( \left( \frac{6}{\pi^2} 
+  \frac{b_{2}}{\log X}\right) T
+\left( \frac{6m_0}{\pi X} 
+ \frac{\pi m_0 b_{2}}{X \log X}
+   \frac{\pi m_0 b_{1} }{\log X}\right) X 
\right) (\log X),
\end{align*}
and use the fact that $X  \ge kH_0$ to obtain the announced bound. 
\end{proof}
\begin{lem}\label{bndFqhalf}
Let $T > 0$, $X \ge kH_0$, and $k$ satisfies \eqref{def-parameters}. Then
\begin{equation}\label{bound-momentF-1/2}
F_X(\tfrac{1}{2},T)\le 
C_4\left(T^{\frac16}\log T+\frac{a_2}{a_1}\right)^2\left(T+\frac{C_2}{C_1}X\right) (\log X),
\end{equation}
where $a_1,a_2$ are defined in \eqref{def-a1-a2}, $C_1$ in \eqref{def-C1}, $C_2$ in \eqref{def-C2}, and
\begin{align}
& \label{def-a3}a_3=-\frac{6a_1}{e}+a_2,\\
& \label{def-C3}
C_3 = C_3(k) 
= a_3^2 C_1(k) \log (kH_0) ,
\\
&
\label{def-C4}
C_4  =C_4(k)
=C_1(k) a_1^2 \left( 1 + \frac1{\sqrt{C_3(k)}} \right)^2.
\end{align}
\end{lem}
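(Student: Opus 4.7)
The plan is to bound $F_X(\tfrac12, T)$ by splitting $|f_X| \le |\zeta|\,|M_X| + 1$ pointwise via $f_X = \zeta M_X - 1$, replacing $|\zeta(\tfrac12+it)|$ on $[0,T]$ by a single uniform bound, and then reducing to the second moment of $M_X$ already controlled by Lemma \ref{bndMX1/2}. Concretely, the triangle inequality gives
\[
 F_X(\tfrac12, T) \le \int_0^T \bigl( |\zeta(\tfrac12+it)|\,|M_X(\tfrac12+it)| + 1 \bigr)^2 dt.
\]
Applying the explicit subconvexity bound \eqref{maxzeta} to replace $|\zeta(\tfrac12+it)|$ uniformly by $M_T := a_1 T^{\frac16}\log T + a_2$ throughout $[0,T]$, expanding the square, and using Cauchy--Schwarz on the linear cross term (so that $\int_0^T |M_X(\tfrac12+it)|\,dt \le \sqrt{T\, I_X(T)}$ with $I_X(T) := \int_0^T |M_X(\tfrac12+it)|^2 dt$), the right-hand side folds into a perfect square:
\[
 F_X(\tfrac12, T) \le \bigl( M_T \sqrt{I_X(T)} + \sqrt{T}\bigr)^2.
\]

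Next I would plug in the bound $I_X(T) \le (C_1 T + C_2 X)\log X$ from Lemma \ref{bndMX1/2}, and then recast the expression as $M_T^2\, I_X(T)\,(1 + r)^2$, where $r := \sqrt T/\bigl(M_T\sqrt{I_X(T)}\bigr)$. The task reduces to showing $r \le 1/\sqrt{C_3}$ uniformly in $T$ and $X$, since then
\[
 F_X(\tfrac12, T) \le \bigl(1 + 1/\sqrt{C_3}\bigr)^2\, M_T^2 \,(C_1 T + C_2 X) \log X,
\]
and rewriting $M_T^2 = a_1^2 (T^{\frac16}\log T + a_2/a_1)^2$, factoring out $C_1$, and collecting the constants into $C_4 = C_1 a_1^2 (1 + 1/\sqrt{C_3})^2$ yields \eqref{bound-momentF-1/2}.

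The only slightly delicate step is the uniform control of the residual $r$. Elementary calculus on $t \mapsto a_1 t^{\frac16}\log t + a_2$ shows that its derivative vanishes precisely at $t = e^{-6}$, where it attains its global minimum $-6a_1/e + a_2 = a_3$; hence $M_T \ge a_3$ for every $T > 0$. Combined with $(C_1 T + C_2 X)\log X \ge C_1 T \log(kH_0)$, which follows from $X \ge kH_0$, this gives $M_T^2 \, I_X(T) \ge a_3^2 C_1 T \log(kH_0) = C_3 T$, so $r \le 1/\sqrt{C_3}$ as required. Once this calculus minimum is identified with $a_3$, all remaining steps are bookkeeping in terms of the constants defined in \eqref{def-C1}--\eqref{def-C4}.
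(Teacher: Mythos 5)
Your argument is correct and is essentially the paper's own proof: the paper obtains the bound $\bigl(\sqrt{I_0}+\sqrt{T}\bigr)^2$ via Minkowski's inequality in $L^2[0,T]$ (equivalent to your pointwise triangle inequality plus Cauchy--Schwarz on the cross term), and then controls the residual exactly as you do, using the minimum $a_3=-6a_1/e+a_2$ of $a_1t^{1/6}\log t+a_2$ and $X\ge kH_0$. The one point to state carefully is that the lower bound $M_T^2\cdot(C_1T+C_2X)\log X\ge C_3T$ applies to the \emph{upper bound} for $I_X(T)$ rather than to the integral $I_X(T)$ itself (for which no lower bound is available), so the factorization $(1+r)^2$ with $r\le 1/\sqrt{C_3}$ must be performed after substituting $(C_1T+C_2X)\log X$ --- which is evidently what you intend by ``plug in the bound \ldots and then recast.''
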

\begin{proof}
We have from the definition of $F_X(\s,T)$ given as \eqref{def-FX} and Minkowski's inequality that 
\[
\sqrt{|F_X(\tfrac12,T)|}
\le \sqrt{\int_{0}^{T}  |\zeta(\tfrac{1}{2}+it) M_X(\tfrac{1}{2}+it)|^2   dt} + \sqrt{T}.
\]
To the last integral we apply Hiary's subconvexity bound \eqref{maxzeta} to bound zeta and 
\eqref{bound-moment-M-1/2} to bound the mean square of $M_X$. We let $I_0$ denote the resulting bound
so that
\[
I_0 = (a_1 T^{\frac16}\log T +a_2 )^2   (C_1T+C_2X) (\log X),
\]
and thus
\[
|F_X(\tfrac12,T)|
\le \left(\sqrt{I_0}+\sqrt{T}\right)^2 =I_0 \left( 1 + \sqrt{\frac{T}{I_0}} \right)^2.
\]
We note that $a_1T^{\frac16}\log T+a_2$ is minimized at $T=e^{-6}$ and we let $a_3$ represent this minimum. Then 
\[I_0\ge a_3^2(C_1T+C_2X)\log X\ge a_3^2C_1T\log X.\]
We conclude with the lower bound
$\frac{I_0}T\ge a_3^2C_1\log (kH_0)$, which is labeled $C_3$,
and
\[
I_0 
= C_1a_1^2  \left(T^{\frac16}\log T+\frac{a_2}{a_1}\right)^2  \left(T+\frac{C_2}{C_1}X\right) (\log X)     ,
\]
which completes the proof.
\end{proof}
\subsubsection{Bounding $F_X(\s_2,T)$ at $\s_2=1+\frac{\delta}{\log X}$}
\begin{lem}\label{bndfq1plusdelta}
 Let $T >0$, $X \ge kH_0$ and $k, \delta, \s_2$  satisfy \eqref{def-parameters}. Then
 \begin{equation}\label{bound-momentF-sigma2}
  F_X(\s_2,T) \le 
 \left( C_5(k,{\del} ) +  \frac{C_6(k,{\del} )(T+\pi m_0)}X  \right) (\log X)^2,
 \end{equation}
 where
 \begin{align}
& \label{def-C5}
C_5(k,{\del} ) = \frac{\pi m_0 b_{4} }{2{\del} } \Big(1+\frac{2{\del} }{\log (kH_0)} \Big)^2e^{\frac{2{\del} \gamma}{\log (kH_0)}}  ,
\\& 
\label{def-C6}
C_6(k,{\del} ) = \frac{b_{4} }{5{\del}  e^{\del} } \Big(1+\frac{{\del} }{\log (kH_0)} \Big)^2  +  \frac{b_{3} e^{-2{\del} }}{(\log (kH_0) )^2} ,
\end{align}
 the $b_i$'s are defined in \eqref{def-bi}, $m_0$ in \eqref{def-m0}, and $\gamma$ is Euler's constant.
\end{lem}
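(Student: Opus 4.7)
The plan is to bound $F_X(\sigma_2,T) = \int_0^T |f_X(\sigma_2+it)|^2\,dt$ by viewing $f_X$ as a Dirichlet series with coefficients $\lambda_X(n)$ and applying the Montgomery--Vaughan mean value theorem from Lemma \ref{ExplicitMV}. Writing $f_X(\sigma_2+it) = \sum_{n\ge 1} u_n n^{it}$ with $u_n = \lambda_X(n)/n^{\sigma_2}$, inequality \eqref{bound-MV-0T} yields
\[
F_X(\sigma_2,T) \;\le\; \sum_{n\ge 1} \frac{\lambda_X(n)^2}{n^{2\sigma_2}}\bigl(T + \pi m_0 (n+1)\bigr).
\]

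Next I would split this estimate into the two natural pieces obtained from $n+1 = n + 1$, namely
\[
F_X(\sigma_2,T) \;\le\; (T+\pi m_0)\sum_{n\ge 1}\frac{\lambda_X(n)^2}{n^{2\sigma_2}} \;+\; \pi m_0 \sum_{n\ge 1}\frac{\lambda_X(n)^2}{n^{2\sigma_2-1}}.
\]
Recalling $2\sigma_2 = 2 + \tfrac{2\delta}{\log X}$ and $2\sigma_2-1 = 1 + \tfrac{2\delta}{\log X}$, the second sum is handled directly by \eqref{lambdaXntau1} applied with $\delta$ replaced by $2\delta$, producing a factor $\frac{b_4}{2\delta}\bigl(1+\tfrac{2\delta}{\log X}\bigr)^2 e^{2\delta\gamma/\log X}(\log X)^2$. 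The first sum is exactly the shape of \eqref{lambdaXntau}, giving a bound of the form $\bigl(A (\log X)^2 + B\bigr)/X$ where $A$ and $B$ come from the two terms on the right side of \eqref{lambdaXntau}.

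At this point I would need to make the estimate uniform in $X \ge kH_0$ so that only $\log(kH_0)$ appears inside the constants $C_5$ and $C_6$, while an overall factor of $(\log X)^2$ is kept outside. The key observation is that each of the factors $\bigl(1+\tfrac{c\delta}{\log X}\bigr)^2$ and $e^{c\delta\gamma/\log X}$ is a decreasing function of $\log X$, so replacing $\log X$ by $\log(kH_0)$ gives an upper bound. For the residual term $b_3 e^{-2\delta}/X$ from \eqref{lambdaXntau}, I would artificially multiply and divide by $(\log X)^2$, then use $1/(\log X)^2 \le 1/(\log(kH_0))^2$ so that this contribution gets absorbed into $C_6(k,\delta)$. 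Finally, the negative-exponential factor $e^{\delta(\gamma-\log 5)/\log X}$ appearing in the first piece of \eqref{lambdaXntau} is bounded trivially by $1$ since $\gamma < \log 5$.

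The routine part is the calculation; there is no genuine obstacle, only bookkeeping. The one place where I would be most careful is the uniformity step: I must verify that every $\log X$-dependent factor inside the constants is monotonically decreasing in $\log X$, so that the lower bound $X \ge kH_0$ yields a legitimate upper bound with $\log(kH_0)$ in place of $\log X$, while the explicit $(\log X)^2$ pulled to the outside is left untouched. Combining the two contributions and matching coefficients with \eqref{def-C5} and \eqref{def-C6} will then produce \eqref{bound-momentF-sigma2}.
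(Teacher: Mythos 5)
Your proposal is correct and follows essentially the same route as the paper: apply the Montgomery--Vaughan bound \eqref{bound-MV-0T} to the Dirichlet series \eqref{series-fX}, split into the $(T+\pi m_0)$ and $\pi m_0$ pieces, bound the resulting sums by \eqref{lambdaXntau1} (with $\delta$ replaced by $2\delta$) and \eqref{lambdaXntau}, and then use $X\ge kH_0$ together with the monotonicity in $\log X$ of the auxiliary factors to arrive at $C_5$ and $C_6$. Your handling of the residual $b_3e^{-2\delta}/X$ term and of the factor $e^{\delta(\gamma-\log 5)/\log X}\le 1$ matches exactly what the paper does implicitly.
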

\begin{proof}
Recall that $F_X$ is defined by  \eqref{def-FX} and by 
 \eqref{series-fX}  we have
 \[ 
 F_X(\s_2,T) = \int_0^T|f_X(\s_2+it)|^2dt = \int_0^T \Big| \sum_{n \ge 1} \frac{\lambda_X(n)}{ n^{\s_2+it}} \Big|^2 dt .
 \]
Inequality \eqref{bound-MV-0T} implies the bound 
 \[
  F_X(\s_2,T)
\le \pi m_0 \sum_{n \ge 1} \frac{\lambda_X(n)^2}{n^{2\s_2-1}} +(T+\pi m_0)\sum_{n \ge 1}\frac{\lambda_X(n)^2}{n^{2\s_2}}.
\]
For $2\s_2-1 = 1+\frac{2{\del} }{\log X}$ and $2\s_2 = 2+\frac{2{\del} }{\log X}$, 
we apply the bounds for arithmetic sums \eqref{lambdaXntau1} 
and \eqref{lambdaXntau}
to respectively bound the two above sums. Thus
\begin{align*}
&  \sum_{n \ge1} \frac{\lambda_X(n)^2}{n^{1+2\frac{{\del} }{\log X}}} \le 
  \frac{b_{4} }{2{\del} } \Big(1+\frac{2{\del} }{\log X} \Big)^2e^{\frac{2{\del} \gamma}{\log X}} (\log X)^2 ,
\\
\text{and }\ 
&   \sum_{n \ge1} \frac{\lambda_X(n)^2}{n^{2+\frac{2{\del} }{\log X}}} 
\le 
\frac{b_{4} }{5{\del}  e^{\del} } \Big(1+\frac{{\del} }{\log X} \Big)^2   \frac{(\log X)^2}{ X} + \frac{b_{3} e^{-2{\del} }}{X}.
\end{align*}
We combine these results and use the fact that  $X\ge kH_0$ to complete the proof. 
\end{proof}
From here we may derive a bound for $\mathcal{M}_{g,T} (X, \s) $.
\subsection{Explicit upper bounds for the mollifier $\mathcal{M}_{g,T} (X, \s) $}
The results in this section are proven for a general weight $g$ satisfying the conditions described in Section \ref{Ingham-method}.
In \cite[Theorem 7]{HIP}, Hardy et al. proved the following convexity estimate:
\begin{lem} \label{convexityinequalityLem} Let $\frac12  \le \s_1 < 1 < \s_2$, let $T > 0$, and $X>1$.  Then
\begin{equation}
 \label{convexityinequality}
    \mathcal{M}_{g,T} (X, \s)  \le \mathcal{M}_{g,T} (X,\s_1)^{\frac{\sigma_2-\sigma}{\sigma_2-\sigma_1}} \mathcal{M}_{g,T} (X,\s_2)^{\frac{\sigma-\sigma_1}{\sigma_2-\sigma_1}}. 
\end{equation}
\end{lem}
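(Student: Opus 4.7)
The plan is to apply Theorem 7 of \cite{HIP} directly to the holomorphic function
$$
G(s) = g(s) f_X(s),
$$
which asserts log-convexity of $\sigma \mapsto \log \int_{-\infty}^{\infty} |G(\sigma+it)|^{2} dt$ on a strip, provided $G$ is holomorphic and the two boundary integrals are finite. Writing the three-lines inequality at the level $\sigma \in [\sigma_1,\sigma_2]$ yields exactly the stated bound \eqref{convexityinequality}, since $\int_{-\infty}^{\infty} |G(\sigma+it)|^{2} dt = \mathcal{M}_{g,T}(X,\sigma)$ by \eqref{def-Mgsigma}.

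The first step is to verify holomorphy of $G$ in a strip containing $\sigma_1 \le \Re(s) \le \sigma_2$. By hypothesis $g$ is holomorphic in that strip, and the factor $(s-1)/s$ appearing in \eqref{gdefn} cancels the simple pole of $\zeta$ at $s=1$; the apparent singularity at $s=0$ lies outside our strip since $\sigma_1 = \tfrac{1}{2} > 0$. As $M_X(s)$ is a finite Dirichlet polynomial, hence entire, the product $g(s)\zeta(s)M_X(s)$ is holomorphic throughout $\sigma_1 \le \Re(s) \le \sigma_2$, and therefore so is $G(s) = g(s)(\zeta(s)M_X(s) - 1)$.

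The second step is to verify the integrability condition on the vertical lines $\Re(s) = \sigma_1$ and $\Re(s) = \sigma_2$. The upper bound \eqref{gupbd} gives Gaussian-type decay $|g(\sigma+it)| \le \omega_1(\sigma,T,\alpha)\, e^{-\alpha(|t|/T)^{\beta}}$ with $\beta = 2$ in our setting, while $f_X(\sigma+it) = \zeta(\sigma+it) M_X(\sigma+it) - 1$ grows at most polynomially in $|t|$ on vertical lines (by Lemma \ref{bndLschi} for $\zeta$ and by trivial estimation of the finite sum $M_X$). The super-exponential decay of $|g|^2$ therefore overwhelms the polynomial growth of $|f_X|^2$, so $\int_{-\infty}^{\infty} |G(\sigma+it)|^{2} dt$ is finite at both endpoints (and continuous in between).

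With both hypotheses checked, Theorem 7 of \cite{HIP} applies and the convexity inequality \eqref{convexityinequality} follows immediately. There is essentially no obstacle beyond this verification; the only subtle point is the cancellation of the $\zeta$-pole by the factor $(s-1)$ built into $g$, which is precisely why the weight was chosen in the form \eqref{gdefn}. Had $g$ lacked this factor, $G$ would fail to be holomorphic at $s=1$ and one would need to excise a neighborhood of $s=1$, complicating the application of the three-lines theorem.
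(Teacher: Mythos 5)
Your proof takes the same route as the paper: the lemma is quoted there directly from Theorem 7 of \cite{HIP} with no further argument, and your appeal to that theorem, together with the check that $G=g\,f_X$ is holomorphic on the strip (the factor $s-1$ in \eqref{gdefn} cancelling the pole of $\zeta$ at $s=1$) and square-integrable on the boundary lines thanks to the Gaussian decay \eqref{gupbd}, supplies exactly the hypotheses the paper leaves implicit. The proposal is correct.
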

In order to obtain a bound for the mollifier $\mathcal{M}_{g,T} (X, \s) $ inside the strip $\frac12  \le \sigma \le 1+ \frac{{\del} }{\log X}$, we need explicit bounds at the extremities $\frac12$ and $1+\frac{{\del} }{\log X}$.
\begin{lem}
Let $T > 0$, $X>0, \sigma \ge \frac{1}{2}$, and let $g$ satisfy conditions \eqref{gupbd} and \eqref{gabseven}.
Then
\begin{equation}\label{Msig1}
 \mathcal{M}_{g,T} (X, \s) \le
4 \omega_1(\s,T,\a)^2 \a \b \int_{0}^{\infty}  x^{\beta-1} e^{-2\a x^{\beta}}    F_X(\s,xT) dx.
 \end{equation}
\end{lem}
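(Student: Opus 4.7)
The plan is to reduce the integral $\mathcal{M}_{g,T}(X,\sigma)$ to a one-sided integral on $[0,\infty)$, apply the pointwise bound on $|g|$ from \eqref{gupbd}, integrate by parts to introduce $F_X(\sigma,\cdot)$, and then rescale.

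First I would exploit symmetry. Since $f_X(s) = \sum_n \lambda_X(n) n^{-s}$ has real coefficients, $|f_X(\sigma-it)| = |f_X(\sigma+it)|$, and by \eqref{gabseven} the weight $|g|$ is even in $t$ as well. Thus
\begin{equation*}
\mathcal{M}_{g,T}(X,\sigma) = 2\int_0^\infty |g(\sigma+it)|^2 |f_X(\sigma+it)|^2 \, dt.
\end{equation*}
Applying \eqref{gupbd} gives
\begin{equation*}
\mathcal{M}_{g,T}(X,\sigma) \le 2\,\omega_1(\sigma,T,\alpha)^2 \int_0^\infty e^{-2\alpha (t/T)^\beta} |f_X(\sigma+it)|^2 \, dt.
\end{equation*}

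Next I would integrate by parts. Writing $\phi(t) = e^{-2\alpha (t/T)^\beta}$ and noting that $F_X(\sigma,t) = \int_0^t |f_X(\sigma+iu)|^2 du$ is an antiderivative of $|f_X(\sigma+it)|^2$, we have
\begin{equation*}
\int_0^\infty \phi(t) |f_X(\sigma+it)|^2 \, dt = \bigl[\phi(t) F_X(\sigma,t)\bigr]_0^\infty - \int_0^\infty \phi'(t) F_X(\sigma,t)\, dt.
\end{equation*}
The boundary term vanishes: at $t=0$ because $F_X(\sigma,0)=0$, and at $t=\infty$ because $\phi(t)$ decays like $e^{-2\alpha (t/T)^\beta}$ while $F_X(\sigma,t)$ grows at most polynomially in $t$ (as one sees, e.g., from the mean value theorem \eqref{bound-MV-0T} applied to the Dirichlet series for $f_X$, together with convergence of the relevant $\lambda_X$-sums for $\sigma>1/2$). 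A direct computation gives $\phi'(t) = -\tfrac{2\alpha\beta}{T}(t/T)^{\beta-1} e^{-2\alpha(t/T)^\beta}$, so
\begin{equation*}
\int_0^\infty \phi(t)\,|f_X(\sigma+it)|^2\,dt = \frac{2\alpha\beta}{T}\int_0^\infty (t/T)^{\beta-1} e^{-2\alpha(t/T)^\beta} F_X(\sigma,t)\, dt.
\end{equation*}

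Finally I would substitute $x = t/T$, $dt = T\,dx$, which turns the right-hand side into
\begin{equation*}
2\alpha\beta \int_0^\infty x^{\beta-1} e^{-2\alpha x^\beta} F_X(\sigma,xT)\, dx,
\end{equation*}
and combining with the factor $2\omega_1(\sigma,T,\alpha)^2$ yields \eqref{Msig1}. The only non-mechanical point in the argument is checking that the boundary term at infinity is negligible; this will be the routine obstacle, but it follows from the super-exponential decay of $\phi$ against the at-most-polynomial growth of $F_X(\sigma,\cdot)$.
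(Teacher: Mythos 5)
Your proof is correct and follows essentially the same route as the paper's: reduce to $[0,\infty)$ by evenness of $|g|$ and $|f_X|$ in $t$, apply the upper bound \eqref{gupbd}, integrate by parts against $F_X(\sigma,\cdot)$, and substitute $x=t/T$. The only difference is that you justify the vanishing of the boundary term at infinity (which the paper simply asserts); note only that at $\sigma=\tfrac12$ the polynomial bound on $F_X(\tfrac12,U)$ comes from writing $f_X=\zeta M_X-1$ as in Lemma \ref{bndFqhalf}, rather than directly from the mean value theorem applied to the $\lambda_X$-series, whose coefficient sums diverge there.
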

\begin{proof}
By \eqref{gabseven} and $|g(\sigma+it)| =|g(\sigma-it)|$ for $t \in \mathbb{R}$ and by an application of 
 \eqref{gupbd} to the weight $g $ in the definition \eqref{def-Mgsigma} of $\mathcal{M}_{g,T} (X, \s) $, we have 
\begin{equation}\label{bound1-Mg}
\mathcal{M}_{g,T} (X, \s)  \le 2 \omega_1(\s,T,\a)^2  \int_{0}^{\infty} e^{-2\a\left(\frac{t}{T}\right)^{\beta}} |f_X(\s+it)|^2dt.
\end{equation}
Note that $ \int_{0}^{U} |f_X(\s+it)|^2dt  =F_X(\s,U)$ with $F_X(\s,0)=0$ and
$\displaystyle{\lim_{U\to \infty} \Big( F_X(\sigma,U)e^{-2\a\left(\frac{U}{T}\right)^{\beta}} \Big) =0}$.
Integrating by parts gives
\begin{align*}
\int_{0}^{\infty} e^{-2\a\left(\frac{t}{T}\right)^{\beta}} |f_X(\s+it)|^2dt
& =  2\a\b \int_{0}^{\infty}\Big( \frac{t}{T}\Big)^{\beta} e^{-2\a\left(\frac{t}{T}\right)^{\beta}} F_X(\s,t) \frac{dt}t
\\ & =  2\a\b \int_{0}^{\infty} x^{\beta} e^{-2\a x^{\beta}} F_X(\s,xT) \frac{dx}x,
\end{align*}
by the variable change $x=\frac{t}{T}$.
This combined with \eqref{bound1-Mg} yields the announced \eqref{Msig1}.
\end{proof}
\subsubsection{Bounding $\mathcal{M}_{g,T} (X,\frac12)$}
Let $\a,\beta, A >0$ and let $n$ be a non-negative integer.
We define 
\begin{equation}\label{def-integralI-original}
 I(A,n)=\int_0^{\infty}x^{A}e^{-2\a x^{\beta}} (\log x)^{n}dx.
\end{equation}
In our context, $I(A,n)$ is a constant depending on parameters $A$ and $n$
and is $\mathcal{O}(1)$ in comparison with $T$.  
The change of variable $y=2\a x^B$ 
leads to the identity
\begin{equation}\label{def-integralI}
I(A,n)  = (2\a)^{-\frac{A+1}{\beta} } \beta^{-(n+1)} \sum_{j=0}^n \binom{n}{j} (-\log(2\a))^{j} \Gamma^{(n-j)}\left(\frac{A+1}{\beta}\right),
\end{equation}
where $\Gamma^{(j)}(z)$ denotes the $j$-th derivative of Euler's gamma function. 
We also define
\begin{equation}
\begin{split}
\label{def-JkT}
 \mathcal{J} (k,T) =
& I(\beta+\tfrac13,0)
+ \frac{C_2}{C_1} k  I(\beta-\tfrac23,0)
+ \frac{2I(\beta+\frac13,1) + 2\frac{C_2}{C_1} k I(\beta-\frac23,1)}{(\log T)}\\
&+\frac{I(\beta+\frac13,2) + \frac{C_2}{C_1} k I(\beta-\frac23,2)}{(\log T)^2}
+ \frac{2a_2 \left( 
 I(\beta+\frac16,0) + \frac{C_2k}{C_1}  I(\beta-\frac56,0) \right)}{a_1T^{\frac16}(\log T)}\\
&+ \frac{2a_2 \left(I(\beta+\frac16,1)
+\frac{C_2k}{C_1} I(\beta-\frac56,1)
 \right)}{a_1T^{\frac16}(\log T)^2 }
+\frac{a_2^2 \left(I(\beta,0)+\frac{C_2k}{C_1}I(\beta-1,0)\right) }{a_1^2T^{\frac13}(\log T)^2 },
\end{split}
\end{equation}
\begin{equation} \label{def-mathcalU}
  \mathcal{U}(\alpha, k,T)
 = 4  \a \b C_4 \omega_1(\tfrac{1}{2},T,\a)^2 \mathcal{J} (k,T),
\end{equation}
where $\omega_1$ and $ C_4$ are respectively defined in \eqref{def-omega1} and \eqref{def-C4}.
We remark that in the case of our weight $g$, we have $\beta=2$.  Thus in our calculations of $
 \mathcal{J} (k,T)$ we  specialize to $\beta=2$.
\begin{lem}\label{M1half}
Let $\a,\b>0$ and $g $ be a function satisfying \eqref{gupbd} and \eqref{gabseven}.
Let  $T \ge H_0$, $X=kT$, and $k$ satisfies \eqref{def-parameters}.  Then
\begin{equation*}
\mathcal{M}_{g,T} (X,\tfrac12)
\le   \mathcal{U}(\alpha, k,T)
(\log (kT)) (\log T)^2 T^{\frac43} .
\end{equation*}
\end{lem}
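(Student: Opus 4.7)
\medskip
\noindent\textbf{Proof proposal for Lemma \ref{M1half}.}
The plan is to insert the bound for $F_X(\tfrac12, xT)$ from Lemma \ref{bndFqhalf} into the integral representation \eqref{Msig1} for $\mathcal{M}_{g,T}(X,\tfrac12)$, expand, and recognize each resulting integral as one of the quantities $I(A,n)$ from \eqref{def-integralI-original}.

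First I would apply \eqref{Msig1} at $\sigma=\tfrac12$, which (since $g$ satisfies \eqref{gupbd} and \eqref{gabseven}) gives
\[
\mathcal{M}_{g,T}(X,\tfrac12)\ \le\ 4\,\omega_1(\tfrac12,T,\a)^2\,\a\b\int_0^\infty x^{\b-1} e^{-2\a x^\b} F_X(\tfrac12, xT)\,dx.
\]
Next, using $X=kT$, Lemma \ref{bndFqhalf} yields
\[
F_X(\tfrac12, xT)\ \le\ C_4\Bigl((xT)^{1/6}\log(xT)+\tfrac{a_2}{a_1}\Bigr)^{\!2}\Bigl(xT+\tfrac{C_2}{C_1}kT\Bigr)\log(kT),
\]
and I pull the factor $C_4\log(kT)$ outside the $x$-integral.

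The bulk of the work is then a bookkeeping expansion. Writing $\log(xT)=\log x+\log T$ and squaring, the first factor produces three pieces in $T^{1/3}, T^{1/6}, T^0$, each multiplied by $(\log x+\log T)^k$ for $k=2,1,0$; the second factor $T(x+\tfrac{C_2 k}{C_1})$ then splits each of these into two terms (an $x$-part and a constant-in-$x$ part). After binomially expanding $(\log x+\log T)^k$, every resulting summand has the form
\[
c\cdot T^{a}\,(\log T)^{j}\,x^{b}(\log x)^{n},
\]
for explicit $c,a,b,j,n$ depending on $a_1,a_2,C_2/C_1,k$. Multiplying by $x^{\b-1}e^{-2\a x^{\b}}$ and integrating over $(0,\infty)$ converts each $x$-integral into $I(\b-1+b,n)$ as defined in \eqref{def-integralI-original}. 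The seven distinct $(b,n)$ pairs that appear are precisely $(\tfrac43,\cdot),(\tfrac13,\cdot),(\tfrac76,\cdot),(\tfrac16,\cdot),(1,0),(0,0)$, matching the seven groups of terms in the definition \eqref{def-JkT} of $\mathcal{J}(k,T)$.

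Finally I would factor $T^{4/3}(\log T)^2$ out of the sum. The dominant terms $(k=2, x$-part and its companion$)$ contribute exactly $I(\b+\tfrac13,0)+\tfrac{C_2}{C_1}k\,I(\b-\tfrac23,0)$; the remaining terms carry compensating powers of $(\log T)^{-1}$ and $T^{-1/6}$ or $T^{-1/3}$, reproducing the six denominators appearing in $\mathcal{J}(k,T)$. Multiplying by the prefactor $4\a\b\,C_4\,\omega_1(\tfrac12,T,\a)^2$ from \eqref{def-mathcalU} gives the claimed bound $\mathcal{U}(\a,k,T)\log(kT)(\log T)^2 T^{4/3}$.

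The main obstacle is simply the careful bookkeeping of the thirteen terms in the expansion and matching them against \eqref{def-JkT}; no analytic difficulty arises, as the integrability of each $x^{b}(\log x)^{n}e^{-2\a x^{\b}}$ over $(0,\infty)$ for $b>-1$ is automatic and the identification with $I(A,n)$ is immediate.
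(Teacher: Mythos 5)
Your proposal is correct and follows essentially the same route as the paper: insert the bound of Lemma \ref{bndFqhalf} into \eqref{Msig1}, expand $(\log(xT))^j$ via $\log x+\log T$, identify each resulting integral with an $I(A,n)$ from \eqref{def-integralI-original}, and factor out $T^{4/3}(\log T)^2\log(kT)$ to recover $\mathcal{J}(k,T)$ and hence $\mathcal{U}(\alpha,k,T)$. The only blemish is the cosmetic miscount of the number of $(b,n)$ groups, which does not affect the argument.
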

\begin{proof}
We combine the bound \eqref{Msig1} for $\mathcal{M}_{g,T}$ with the bound \eqref{bound-momentF-1/2} for $F_X(\frac12,xT)$:
\begin{multline*}
   \mathcal{M}_{g,T} (X,\tfrac12)
\le 
4  \a \b C_4 \omega_1(\tfrac{1}{2},T,\alpha)^2 (\log X) 
\left \{
T^{\frac43} \int_{0}^{\infty}  x^{\beta+\frac13} (\log (xT))^2 e^{-2\a x^{\beta}}  dx
\right. \\ \left. 
+
\frac{2a_2}{a_1}T^{\frac76} \int_{0}^{\infty}  x^{\beta+\frac16} \log (xT)e^{-2\a x^{\beta}}  dx 
+\frac{a_2^2}{a_1^2}T \int_{0}^{\infty}  x^{\beta}  e^{-2\a x^{\beta}}  dx 
\right. \\ \left. +
\frac{C_2}{C_1}X T^{\frac13} \int_{0}^{\infty}  x^{\beta-\frac23} (\log (xT))^2 e^{-2\a x^{\beta}}  dx
+\frac{2a_2}{a_1}\frac{C_2}{C_1} XT^{\frac16} \int_{0}^{\infty}  x^{\beta-\frac56} \log (xT) e^{-2\a x^{\beta}}  dx 
\right. \\ \left. +
\frac{a_2^2}{a_1^2}\frac{C_2}{C_1} X \int_{0}^{\infty}  x^{\beta-1}  e^{-2\a x^{\beta}}  dx 
\right \}.
\end{multline*}
We also use the fact that $(\log(xT))^2 = (\log x)^2+2(\log x)(\log T)+(\log T)^2$ and obtain
\begin{multline*}
\mathcal{M}_{g,T} (X,\tfrac12)
\le   4  \a \b C_4 \omega_1(\tfrac{1}{2},T,\alpha)^2 (\log X) 
\left \{ T^{\frac43} 
 \left( 
I(\beta+\tfrac13,2) + 2 (\log T) I(\beta+\tfrac13,1)  \right.  \right. \\ \left. \left. + (\log T)^2 I(\beta+\tfrac13,0) 
\right)
  +\frac{2a_2}{a_1}T^{\frac76}
  \left( I(\beta+\tfrac16,1) + (\log T)I(\beta+\tfrac16,0) \right)
  + \frac{a_2^2}{a_1^2}T I(\beta,0)
  \right. \\ \left. 
  + \frac{C_2}{C_1}X T^{\frac13} 
  \left( I(\beta-\tfrac23,2) + 2(\log T) I(\beta-\tfrac23,1) + (\log T)^2 I(\beta-\tfrac23,0)  \right)
  \right. \\ \left. 
  +\frac{2a_2}{a_1}\frac{C_2}{C_1}XT^{\frac16} 
 \left(   I(\beta-\tfrac56,1) + (\log T) I(\beta-\tfrac56 ,0)  \right)
 + \frac{a_2^2}{a_1^2}\frac{C_2}{C_1}XI(\beta-1,0)
\right \},
\end{multline*}
where $I$ is the integral defined in \eqref{def-integralI-original}.
At this point we choose $X=kT$ so as to optimize the above bound, and 
we factor out the main term $T^{\frac43}(\log T)^2$:
\begin{multline*}
\mathcal{M}_{g,T} (X,\tfrac12)
\le
4  \a \b C_4 \omega_1(\tfrac{1}{2},T,\alpha)^2 (\log (kT)) (\log T)^2  T^{\frac43}
\left \{
 I(\beta+\tfrac13,0)
+\frac{k C_2}{C_1} 
 I(\beta-\tfrac23,0)
\right. \\ \left. 
+ 2\frac{ I(\beta+\tfrac13,1) +\frac{ k C_2  }{C_1} I(\beta-\tfrac23,1)}{(\log T)}
+ \frac{I(\beta+\frac13,2)
+ \frac{k C_2}{C_1}  I(\beta-\frac23,2)}{(\log T)^2}
+ \frac{2a_2 }{a_1 }  \frac{ I(\beta+\frac16,0) + \frac{kC_2 }{ C_1} I(\beta-\frac56,0) }{(\log T) T^{\frac16}} 
\right. \\ \left. 
+  \frac{2a_2 }{a_1} \frac{ I(\beta+\frac16,1) 
+ \frac{ kC_2  }{  C_1} I(\beta-\frac56,1)}{(\log T)^2 T^{\frac16}}
+ \frac{a_2^2}{a_1^2}
\frac{ I(\beta,0) + \frac{kC_2}{C_1}I(\beta-1,0)}{ (\log T)^2T^{\frac13}} 
\right \}.
\end{multline*}
We recognize in the above term between brackets $\mathcal{J} (k,T)$ as introduced in \eqref{def-JkT}.
\end{proof}
\subsubsection{Bounding $\mathcal{M}_{g,T} (X,\s_2)$ at $\s_2 =1+\frac{\delta}{\log X}$}
\begin{lem}\label{M1delta}
Let $g $ be as defined in Lemma \ref{UpboundPhicor}.
Let $T \ge H_0$, $X=kT$, and
 $k, \delta, \s_2$  satisfy \eqref{def-parameters}.
Then
\[
\mathcal{M}_{g,T} (X,\s_2) \le \mathcal{V} (\a,k,{\del},T )(\log (kT))^2,
\]
where
\begin{align}
& \label{def-mathcalV}
\mathcal{V} (\a,k,{\del} ,T)
 = 8 \a \omega_1(\s_2,T,\alpha)^2   \mathcal{K} (k,{\del} ,T),
\\
&\label{def-KkdeltaT}
 \mathcal{K} (k,{\del} ,T) = 
\left(C_5(k,{\del} ) + \frac{C_6(k,{\del} )\pi m_0}{kT } \right)
  I(1,0)
+ \frac{C_6(k,{\del} ) }k
 I(2,0),
\end{align}
and $m_0,\omega_1,C_5,C_6,$ and $I$ are respectively defined in \eqref{def-m0}, \eqref{def-omega1}, \eqref{def-C5}, \eqref{def-C6}, and  \eqref{def-integralI-original}.
\end{lem}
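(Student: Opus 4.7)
The strategy is to run the same machinery already developed for $\mathcal{M}_{g,T}(X,\tfrac12)$ in Lemma \ref{M1half}, but now feeding in the much simpler mean-square bound at $\sigma_2$ from Lemma \ref{bndfq1plusdelta}. The computation will be considerably shorter because $F_X(\sigma_2,T)$ is linear in $T$ (up to $(\log X)^2$), whereas $F_X(\tfrac12,T)$ carried the extra factor $T^{1/6}\log T$ from Hiary's subconvexity bound.

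First, I would invoke the bound \eqref{Msig1} specialized to $\beta=2$ and $\sigma=\sigma_2$:
\[
\mathcal{M}_{g,T}(X,\s_2)\le 8\a\,\omega_1(\s_2,T,\a)^2\int_0^\infty x\,e^{-2\a x^2}F_X(\s_2,xT)\,dx.
\]
Next I would substitute Lemma \ref{bndfq1plusdelta} with $T$ replaced by $xT$; since $X=kT$, this gives
\[
F_X(\s_2,xT)\le\Bigl(C_5(k,{\del})+\frac{C_6(k,{\del})(xT+\pi m_0)}{kT}\Bigr)(\log(kT))^2,
\]
and after separating the three terms in $x$, one has
\[
F_X(\s_2,xT)\le\Bigl(C_5(k,{\del})+\frac{C_6(k,{\del})\pi m_0}{kT}\Bigr)(\log(kT))^2+\frac{C_6(k,{\del})}{k}\,x\,(\log(kT))^2.
\]

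Plugging this into the integral splits things cleanly into two pieces, each of the form $\int_0^\infty x^A e^{-2\a x^2}dx$ with $A=1$ and $A=2$. Recognizing these as $I(1,0)$ and $I(2,0)$ from definition \eqref{def-integralI-original} (with $\beta=2$), one obtains
\[
\mathcal{M}_{g,T}(X,\s_2)\le 8\a\,\omega_1(\s_2,T,\a)^2\,(\log(kT))^2\Bigl[\Bigl(C_5(k,{\del})+\frac{C_6(k,{\del})\pi m_0}{kT}\Bigr)I(1,0)+\frac{C_6(k,{\del})}{k}I(2,0)\Bigr],
\]
which is exactly $\mathcal{V}(\a,k,{\del},T)(\log(kT))^2$ by the definitions \eqref{def-mathcalV}--\eqref{def-KkdeltaT}.

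There is no substantive obstacle: the only points requiring care are (i) applying the hypothesis $X\ge kH_0$ correctly when absorbing it into the constants $C_5,C_6$ (already handled inside Lemma \ref{bndfq1plusdelta}), and (ii) choosing $X=kT$ at the right moment so that the factor $1/X$ in the second term of the $F_X$-bound produces a clean factor of $x/k$ rather than an $x$-independent tail. With those in mind, the proof reduces to bookkeeping and can be presented in just a few lines mirroring the structure of Lemma \ref{M1half}.
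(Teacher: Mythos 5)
Your proposal is correct and follows essentially the same route as the paper's proof: apply \eqref{Msig1}, insert the bound \eqref{bound-momentF-sigma2} for $F_X(\s_2,xT)$, split the integrand, and identify the resulting integrals as $I(1,0)$ and $I(2,0)$. The only cosmetic difference is that you specialize to $\beta=2$ at the outset while the paper keeps $\beta$ general until the final line.
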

\begin{proof}
We combine the bound \eqref{Msig1} for $\mathcal{M}_{g,T} $ 
with the bound \eqref{bound-momentF-sigma2} for $F_X(\s_2,xT)$ (since $X \ge kH_0$) to obtain
\begin{equation}
 \mathcal{M}_{g,T} (X,\s_2)
\le 4 \a \b \omega_1(\s_2,T,\alpha)^2   
\Bigg(
\int_{0}^{\infty} x^{\beta-1} e^{-2\alpha x^{\beta}} 
\Big( C_5(k,\delta) + \frac{C_6(k,\delta)(xT+\pi m_0)  }{X} \Big) (\log X)^2 dx
\Bigg).
\end{equation} 
Rearranging this and recalling the definition for $I$ in \eqref{def-integralI-original} we obtain
\begin{multline*}
\mathcal{M}_{g,T} (X,\s_2)
\le 4 \a \b \omega_1(\s_2,T,\alpha)^2  (\log X)^2 \left(
\left(C_5(k,{\del} ) + \frac{C_6(k,{\del} )\pi m_0}X \right)
  I(\beta-1,0)
\right. \\ \left.  + \frac{C_6(k,{\del} )T}X 
 I(\beta,0)
\right).
\end{multline*}
We conclude by noting that $X=kT$ and for our $g$, $\beta=2$. 
\end{proof}
\subsubsection{Conclusion}
Finally, we provide bounds for $\mathcal{M}_{g,T} $.
\begin{lem}
Let $g $ be as defined in Lemma \ref{UpboundPhicor}.
Let $T \ge H_0$, $X=kT$, and
 $k$  satisfies \eqref{def-parameters}.
Assume $\frac{1}{2} \le \s \le 1+\frac{\delta}{\log X}$.
Then
\begin{equation}\label{Mgsigma}
\begin{split}
  \mathcal{M}_{g,T} (X, \s)  
& \le 
e^{\frac{8}{3} \delta (2 \sigma-1) M(k,\sigma) + \frac{ 4 \delta (2 \sigma-1)\log \log H_0}{\log(kH_0)+2 \delta} } 
\mathcal{U}(\alpha, k,T)^{2(1-\s)+ \frac{2 \delta (2 \sigma-1)}{\log(kT)+2 \delta}} \times \\
& \mathcal{V} (\a,k,{\del},T )^{2\s-1- \frac{2 \delta (2 \sigma-1)}{\log(kT)+2 \delta}}  
(\log (kT))^{2\s}
(\log T)^{4(1-\s)}
T^{\frac83(1-\s)},
\end{split}
\end{equation}
where $\mathcal{U}$ and $\mathcal{V}$ are respectively defined in \eqref{def-mathcalU} and \eqref{def-mathcalV}
and 
\begin{equation}
   \label{Mkdelta}
     M(k,\delta)  = \max \Big(  \frac{\log H_0}{\log(kH_0) + 2 \delta} , 1 \Big).
\end{equation}
\end{lem}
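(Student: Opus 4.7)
My plan is to apply the convexity estimate of Lemma \ref{convexityinequalityLem} with $\sigma_1=\tfrac12$ and $\sigma_2=1+\delta/\log X=1+\delta/\log(kT)$, then substitute the endpoint bounds from Lemmas \ref{M1half} and \ref{M1delta}, and finally rearrange the resulting factors to match the desired form.

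First, I would evaluate the interpolation exponents. With $\sigma_1=\tfrac12$ one computes $\sigma_2-\sigma_1=(\log(kT)+2\delta)/(2\log(kT))$, and straightforward algebra yields
\[
\frac{\sigma_2-\sigma}{\sigma_2-\sigma_1}=2(1-\sigma)+\frac{2\delta(2\sigma-1)}{\log(kT)+2\delta},\qquad
\frac{\sigma-\sigma_1}{\sigma_2-\sigma_1}=(2\sigma-1)-\frac{2\delta(2\sigma-1)}{\log(kT)+2\delta},
\]
which are exactly the exponents on $\mathcal{U}$ and $\mathcal{V}$ appearing in the statement. Call these $E_1$ and $E_2$ respectively.

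Plugging Lemmas \ref{M1half} and \ref{M1delta} into the convexity inequality produces, besides the $\mathcal{U}^{E_1}\mathcal{V}^{E_2}$ factor, a product of the form $((\log(kT))(\log T)^2 T^{4/3})^{E_1}\cdot (\log(kT))^{2E_2}$. Collecting powers, the total exponent on $\log(kT)$ is $2\sigma-\tfrac{2\delta(2\sigma-1)}{\log(kT)+2\delta}$, on $\log T$ is $4(1-\sigma)+\tfrac{4\delta(2\sigma-1)}{\log(kT)+2\delta}$, and on $T$ is $\tfrac{8}{3}(1-\sigma)+\tfrac{(8/3)\delta(2\sigma-1)}{\log(kT)+2\delta}$. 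I would split each into its main piece (matching the target exponent) and a correction piece, so that the target polynomial factors $(\log(kT))^{2\sigma}(\log T)^{4(1-\sigma)}T^{(8/3)(1-\sigma)}$ appear cleanly.

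What remains is to absorb the three correction factors into the exponential prefactor. The $\log(kT)$-correction has a nonpositive exponent and is bounded by $1$ since $\log(kT)\ge 1$. The $T$-correction equals $\exp\!\bigl(\tfrac{(8/3)\delta(2\sigma-1)\log T}{\log(kT)+2\delta}\bigr)$, and is bounded by $\exp\!\bigl(\tfrac{8}{3}\delta(2\sigma-1)M(k,\delta)\bigr)$ after verifying that $\log T/(\log(kT)+2\delta)\le M(k,\delta)$ for all $T\ge H_0$; this follows by examining the sign of the $T$-derivative, which is proportional to $\log k+2\delta$, and treating the two cases separately: in the increasing case the function tends to $1$, while in the decreasing case its maximum is at $T=H_0$, and these two options are precisely what the $\max$ in the definition of $M(k,\delta)$ captures. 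The $\log T$-correction equals $\exp\!\bigl(\tfrac{4\delta(2\sigma-1)\log\log T}{\log(kT)+2\delta}\bigr)$, and a similar derivative analysis of $\log\log T/(\log(kT)+2\delta)$ shows this ratio is maximized at $T=H_0$ under the standing parameter ranges, giving the factor $\exp\!\bigl(\tfrac{4\delta(2\sigma-1)\log\log H_0}{\log(kH_0)+2\delta}\bigr)$. Multiplying everything together produces precisely the stated inequality. The main obstacle is not conceptual but bookkeeping: the exponent algebra and the two monotonicity verifications must be done with care to produce the exact constants in the statement.
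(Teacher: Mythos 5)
Your proposal is correct and follows essentially the same route as the paper: the convexity inequality of Lemma \ref{convexityinequalityLem} with the endpoints $\tfrac12$ and $1+\delta/\log(kT)$, the identification of the interpolation exponents with $2(1-\sigma)+\tfrac{2\delta(2\sigma-1)}{\log(kT)+2\delta}$ and its complement, the observation that the $\log(kT)$ correction has nonpositive exponent, and the two monotonicity arguments bounding $\tfrac{\log T}{\log(kT)+2\delta}$ by $M(k,\delta)$ and $\tfrac{\log\log T}{\log(kT)+2\delta}$ by its value at $T=H_0$. The only point to make explicit is that the second monotonicity claim requires $0<\delta<\tfrac{\log(H_0)(\log\log H_0-1)}{2}$, which the paper states in the proof and which your ``standing parameter ranges'' should be understood to include.
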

\begin{proof} 
Let $\s_1=\frac{1}{2}$ andf $\s_2 = 1+\frac{\delta}{\log X}$ and $\s \in [\s_1,\s_2]$. 
We apply the convexity inequality \eqref{convexityinequality} with exponents
\begin{equation}
  \label{abdefn}
 a = \frac{\s_2-\s}{\s_2-\s_1} = \frac{1+\frac{\delta}{\log X}-\s }{( 1+\frac{\delta}{\log X}) - \frac{1}{2}} 
 \text{ and }
 b = 1-a = \frac{\s-\s_1}{\s_2-\s_1} = \frac{\s -\frac{1}{2}  }{( 1+\frac{\delta}{\log X}) - \frac{1}{2} }
\end{equation} 
in combination with Lemmas \ref{M1half}, Lemma \ref{M1delta} to obtain 
\begin{equation}
  \label{MgTUVbd}
  \mathcal{M}_{g,T} (X, \s)  \le
\mathcal{U}(\alpha, k,T)^a 
\mathcal{V} (\a,k,{\del},T )^b 
(\log (kT))^{a+2b}
(\log T)^{2a}
T^{\frac43a}.
\end{equation}
Next, from the definitions of \eqref{abdefn} it may be checked that 
\begin{equation}
  \label{equalities}
a = 2(1-\s)+ \frac{2 \delta (2 \sigma-1)}{\log X +2 \delta}, 
\text{ and }
\  b = 2\s-1- \frac{2 \delta (2 \sigma-1)}{\log X +2 \delta}.
\end{equation}
From these equalities it follows that $a+2b \le 2 \s$. 
Using \eqref{equalities} and the bound for $a+2b$ (since $\log(kT) \ge \log(kH_0) \ge \log(10^9) >1$),
we have  
\begin{equation}\label{Mgsigma2}
\begin{split}
  \mathcal{M}_{g,T} (X, \s)  
  & \le 
  e^{\frac{4}{3} \times \frac{2 \delta (2 \sigma-1) \log T}{\log(kT)+2 \delta} 
  + 2 \times \frac{ 2 \delta (2 \sigma-1)\log \log T}{\log(kT)+2 \delta} } 
\mathcal{U}(\alpha, k,T)^{2(1-\s)+ \frac{2 \delta (2 \sigma-1)}{\log(kT )+2 \delta}} \times \\
& 
\mathcal{V} (\a,k,{\del},T )^{2\s-1- \frac{2 \delta (2 \sigma-1)}{\log(kT )+2 \delta}} 
(\log (kT))^{2\s}
(\log T)^{4(1-\s)}
T^{\frac83(1-\s)}.
\end{split}
\end{equation}
Next we observe that the function $\frac{\log T}{\log(kT) +2\delta}$
decreases if $\log k + 2\delta < 0$ and increases if $\log k + 2 \delta >0$ 
and thus  
\begin{equation}
  \label{M}
   \frac{\log T}{\log(kT) + 2\delta} \le 
    M(k,\delta) := 
    \begin{cases}
   \frac{\log H_0}{\log(kH_0) + 2 \delta} & \text{ if } \log k + 2\delta < 0, \\
   1 & \text{ if }  \log k + 2\delta  \ge  0
   \end{cases}
\end{equation}
where $M(k,\delta)$ was defined in \eqref{Mkdelta}.
Furthermore, it may be checked by the conditions on $k$, that
$\frac{\log \log T}{\log(kT) + 2\delta}$ decreases as long as $0 < \delta < \frac{\log(H_0) (\log \log H_0 -1)}{2} $.
Using these observations in \eqref{Mgsigma2} we deduce \eqref{Mgsigma}. 
%
\end{proof}
\subsection{Bounding $F_X(\sigma,T)-F_X(\sigma,H)$} 
\begin{lem}\label{F1TF1Hbnd}
Let $g $ be as defined in Lemma \ref{UpboundPhicor}.
Let $\sigma\in[\frac12 ,1]$ and $\a>0$. 
Let $T \ge H_0 \ge H >0$, $X=kT$, 
 $k$  satisfies \eqref{def-parameters}, and $0 < \delta < \frac{\log(H_0) (\log \log H_0 -1)}{2} = 26.36 \ldots$.
Then 
\begin{equation}
\begin{split}
F_X(\sigma,T)-F_X(\sigma,H)
& \le \frac{e^{\frac{8}{3} \delta (2 \sigma-1) M(k,\delta) + \frac{ 4 \delta (2 \sigma-1)\log \log H_0}{\log(kH_0)+2 \delta}  }\mathcal{U}(\alpha, k,T)^{2(1-\s)+\frac{2 \delta (2 \sigma-1)}{\log(kT)+2 \delta}}\mathcal{V} (\a,k,{\del} ,T)^{2\s-1-\frac{2 \delta (2 \sigma-1)}{\log(kT)+2 \delta}}}{2(\omega_2(\sigma,T,\a))^2 }  \\
& \times (\log (kT))^{2\s} (\log T)^{4(1-\s)} T^{\frac83(1-\s)},
\end{split}
\end{equation}
where $\omega_2, \mathcal{U}, \mathcal{V}$ are respectively defined in  \eqref{def-omega2}, \eqref{def-mathcalU}, \eqref{def-mathcalV}.
\end{lem}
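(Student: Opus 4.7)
The plan is to combine two ingredients already in hand: the pointwise lower bound $|g(\s+it)| \ge \omega_2(\s,T,\a)$ for $t \in [H,T]$ from Lemma \ref{UpboundPhicor}, together with the explicit convexity bound \eqref{Mgsigma} on the mollified second moment $\mathcal{M}_{g,T}(X,\s)$. Conceptually this just makes precise the heuristic inequality \eqref{bnd-F}: because the lower bound on $|g|$ is only available on $[H,T]$, one can genuinely control only the truncated second moment $F_X(\s,T) - F_X(\s,H)$.

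The first step is to write
\[
F_X(\s,T) - F_X(\s,H) = \int_H^T |f_X(\s+it)|^2\,dt,
\]
insert the factor $|g(\s+it)|^2/|g(\s+it)|^2$ into the integrand, and apply the lower bound \eqref{expl-glbd} to the denominator to obtain
\[
F_X(\s,T) - F_X(\s,H) \le \frac{1}{\omega_2(\s,T,\a)^2}\int_H^T |g(\s+it)|^2 |f_X(\s+it)|^2\,dt.
\]

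The second step is to enlarge the range of integration. The integrand $|g(\s+it)|^2 |f_X(\s+it)|^2$ is an even function of $t$: evenness of $|g|$ is assumption \eqref{gabseven}, and evenness of $|f_X|$ follows from the Dirichlet series \eqref{series-fX} with real coefficients $\lambda_X(n)$, which forces $\overline{f_X(s)} = f_X(\overline{s})$ and hence $|f_X(\s-it)| = |f_X(\s+it)|$. Therefore
\[
\int_H^T |g(\s+it)|^2 |f_X(\s+it)|^2\,dt \le \int_0^{\infty} |g(\s+it)|^2 |f_X(\s+it)|^2\,dt = \tfrac12\,\mathcal{M}_{g,T}(X,\s),
\]
which yields the core bound
\[
F_X(\s,T) - F_X(\s,H) \le \frac{\mathcal{M}_{g,T}(X,\s)}{2\,\omega_2(\s,T,\a)^2}.
\]

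The final step is to substitute \eqref{Mgsigma} for $\mathcal{M}_{g,T}(X,\s)$ and read off the claimed inequality. Since $\s \in [\tfrac12,1] \subset [\tfrac12, 1+\del/\log X]$, the hypothesis of the preceding lemma is met, and the numerical condition $0 < \del < \tfrac{\log H_0(\log\log H_0 - 1)}{2}$ in the statement is exactly what was needed to justify the monotonicity of $\frac{\log\log T}{\log(kT)+2\del}$ used in the passage from \eqref{Mgsigma2} to \eqref{Mgsigma}. There is no genuine obstacle here: all of the real analytic input (subconvexity, mollifier second moments, the explicit Montgomery--Vaughan mean value theorem, and convexity in $\s$) has already been packaged into \eqref{Mgsigma}, so this lemma is a pure assembly step.
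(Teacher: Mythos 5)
Your proposal is correct and follows essentially the same route as the paper: apply the lower bound \eqref{expl-glbd} on $|g|$ over $[H,T]$, use evenness of $t\mapsto|g(\s+it)f_X(\s+it)|$ to bound the truncated integral by $\tfrac12\mathcal{M}_{g,T}(X,\s)$, and then insert \eqref{Mgsigma}. Your extra remarks (justifying evenness of $|f_X|$ via the real coefficients $\lambda_X(n)$, and noting where the hypothesis on $\delta$ enters) are accurate but not needed beyond what the paper records.
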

\begin{proof}
By the assumed lower bound on $g$, \eqref{glbd}, we have 
\[
   F_X(\sigma,T)-F_X(\sigma,H)= \int_{H}^{T} |f_X(\s+it)|^2 dt \le \frac{1}{(\omega_2(\sigma,T,\a))^2 } \int_{H}^{T} |g(\s+it)|^2 |f_X(\s+it)|^2 dt.
\]
Since $t \to |g(\s+it)f_X(\s+it)|$ is even, it follows that 
\[
  F_X(\sigma,T)-F_X(\sigma,H) \le \frac{\mathcal{M}_{g,T} (X, \s) }{2(\omega_2(\sigma,T,\a))^2 }
\]
and we conclude by inserting the  bound \eqref{Mgsigma} for $\mathcal{M}_{g,T}(X,\s)$.
\end{proof}
\subsection{Explicit upper bounds for $ \int_{\s'}^{\mu} \arg h_X(\tau+iT ) d\tau -  \int_{\s'}^{\mu} \arg h_X(\tau+iH ) d\tau$}
The following Proposition and Corollary are a variant of Titchmarsh \cite[Lemma, p. 213]{Tit}.
This proposition gives a bounds for $ \arg f(\sigma+iT)$ where $f$ is a holomorphic function. 
The argument we use here is due to Backlund \cite{Back} in the case that $f(s)=\zeta(s)$.  
The cases of Dirichlet $L$-functions and Dedekind zeta functions have been worked out by 
McCurley \cite{Mc} and by the first and third authors \cite{KadNg} respectively.
\begin{prop} \label{Backlund}
Let $\eta >0$.  
Let $f(s)$ be a holomorphic function, for $\Re(s) \ge -\eta$, real for real $s$.  
Assume there exist positive constants $M$ and $m$ such that 
\begin{align}
\label{ubd1} & |f(s)| \le M \text{ for  }  \Re(s) \ge 1+\eta,  \\
\label{lbd} & |\Re f(1+\eta +it)| \ge m >0 \text{ for all } t \in \mathbb{R}. 
\end{align}
Let $\s\in(0,1+\eta]$ and assume that $U$ is not the ordinate of a zero of $f(s)$.  
Then there exists an increasing sequence of natural numbers $\{ N_k \}_{k=1}^{\infty}$ such that 
\begin{equation}
   \left|  \arg f(\sigma+iU) \right|
   \le \frac{\pi}{ \log 2} \mathscr{L}_k   + \frac{\pi \log M}{2 \log 2}-\frac{\pi \log m}{\log 2} +\frac{\pi}{2} +o_k(1)
\end{equation}
where 
\begin{equation}
  \label{def-Lk} 
   \mathscr{L}_k  =
    \frac{1}{2 \pi N_k} \int_{\frac{\pi}{2}}^{\frac{3 \pi}{2}}
    \log \Big( 
    \frac{1}{2} 
     \sum_{j=0}^{1}   |f(1+\eta +(1+2\eta) e^{i \theta}+(-1)^j iU)|^{N_k}
         \Big) \, d\theta
\end{equation}
and $o_k(1)$ is a term that approaches $0$ as $k \to \infty$.  
\end{prop}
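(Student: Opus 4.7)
The plan is to follow the classical Backlund strategy: first bound $|\arg f(\sigma + iU)|$ by the number of sign changes of $\Re f$ along the horizontal segment from $1+\eta+iU$ to $\sigma+iU$; next translate those sign changes into real zeros of a holomorphic auxiliary function; and finally bound that zero count via Jensen's inequality. Throughout, $\arg f$ is defined by continuation along horizontal lines, normalized so that the hypothesis $|\Re f(1+\eta+it)| \ge m > 0$ forces $|\arg f(1+\eta+iU)| < \pi/2$ (since $f$ is real on $\R$, continuity ensures that $\Re f$ keeps a single sign along $\Re s = 1+\eta$).

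For the argument-change step, as $\tau$ decreases from $1+\eta$ to $\sigma$, each zero of $\tau \mapsto \Re f(\tau+iU)$ lets $\arg f$ cross from one of the intervals $(-\tfrac\pi2 + j\pi, \tfrac\pi2 + j\pi)$ into an adjacent one, shifting $j$ by at most one. Starting from $|\arg f(1+\eta+iU)| < \tfrac\pi2$, induction on the number $n$ of such zeros on $[\sigma, 1+\eta]$ yields
\[
  |\arg f(\sigma+iU)| \le \tfrac{\pi}{2} + n\pi .
\]

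To bound $n$, I invoke Schwarz reflection: since $\overline{f(s)} = f(\bar s)$, the function
\[
   g(z) = \tfrac{1}{2}\bigl( f(z+iU) + f(z-iU) \bigr)
\]
is holomorphic in $\Re z \ge -\eta$ and satisfies $g(\tau) = \Re f(\tau+iU)$ on the real axis. I then apply Jensen's inequality to $g$ on concentric disks centered at $z_0 = 1+\eta$ with outer radius $R = 1+2\eta$ (inside the domain of holomorphy), letting the inner radius approach $R/2$ along a sequence avoiding zeros of $g$ on the inner circle; this gives
\[
   n \log 2 \le \tfrac{1}{2\pi}\int_0^{2\pi} \log|g(z_0 + Re^{i\theta})|\, d\theta - \log|g(z_0)| + o_k(1) ,
\]
where the $o_k(1)$ absorbs the limit in the radius and $|g(z_0)| = |f(1+\eta)| \ge m$ handles the centerpoint.

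Finally, I split the boundary integral. On the right half ($\theta \in (-\tfrac\pi2, \tfrac\pi2)$) the real part of $z_0 + Re^{i\theta}$ is at least $1+\eta$, so both $|f(z \pm iU)| \le M$ and hence $|g| \le M$, contributing at most $\tfrac12 \log M$ to the mean. On the left half ($\theta \in (\tfrac\pi2, \tfrac{3\pi}{2})$), the power-mean inequality gives
\[
   |g(z)|^{N_k} \le \tfrac12 \bigl( |f(z+iU)|^{N_k} + |f(z-iU)|^{N_k} \bigr)
\]
for every integer $N_k \ge 1$, so $\tfrac{1}{2\pi}\int_{\pi/2}^{3\pi/2} \log|g(z_0+Re^{i\theta})|\, d\theta \le \mathscr{L}_k$. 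Assembling these pieces yields $n \le \tfrac{1}{\log 2}\bigl( \mathscr{L}_k + \tfrac12 \log M - \log m \bigr) + o_k(1)$, and substituting into the argument-change bound completes the proof. The principal delicate point is pinning down the constant $\tfrac\pi2$ (rather than, say, $\tfrac{3\pi}{2}$) in the argument-change step, which relies on the refinement that $\arg f$ starts in an interval of half-length $\pi/2$ and shifts by at most $\pi$ per sign change; a secondary technicality is choosing the sequence $\{N_k\}$ and the associated inner radii so that Jensen's formula applies cleanly and the error terms are absorbed into $o_k(1)$.
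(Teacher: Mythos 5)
Your proof is correct, and it departs from the paper's at the key technical point. The paper reflects the $N$-th power: it sets $F_N(w)=\tfrac12\bigl(f(w+iU)^N+f(w-iU)^N\bigr)$, counts the real zeros of $F_N$ on $[\sigma,1+\eta]$, and applies Jensen to $F_N$. The price is the lower bound at the centre: $F_N(1+\eta)=\Re\bigl(f(1+\eta+iU)^N\bigr)=r^N\cos(N\phi)$ is not controlled by \eqref{lbd} for general $N$, so the sequence $N_k$ must be produced by Dirichlet's approximation theorem so that $\cos(N_k\phi)\to 1$; this is the sole source of the $o_k(1)$. You instead keep $N=1$ in both the zero count and Jensen's formula, where the centre value is $g(1+\eta)=\Re f(1+\eta+iU)$, whose modulus is at least $m$ directly from \eqref{lbd} with $t=U$. (Your identification $g(z_0)=f(1+\eta)$ is a slip --- the reflection identity gives $g(1+\eta)=\Re f(1+\eta+iU)$ --- but the bound $|g(z_0)|\ge m$ you need is then exactly the hypothesis, so nothing is lost.) The exponent $N_k$ enters your argument only through the convexity inequality $\bigl(\tfrac{a+b}{2}\bigr)^{N}\le\tfrac{a^N+b^N}{2}$ on the left semicircle, which converts $\tfrac{1}{2\pi}\int\log|g|$ into $\mathscr{L}_k$ for an \emph{arbitrary} increasing sequence $N_k$; no approximation argument is needed and the $o_k(1)$ you attach to Jensen's step is in fact superfluous. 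In exchange you must use the sharper argument-change count $|\arg f(\sigma+iU)|\le \tfrac{\pi}{2}+n\pi$: the paper's cruder ``at most $\pi$ per subinterval'' bound gives $(n+1)\pi/N+\tfrac{\pi}{2}$ and only recovers the same constant because $N\to\infty$ kills the extra $\pi/N$. Your justification of that refinement --- $\arg f$ starts in the length-$\pi$ component of $\mathbb{R}\setminus(\tfrac{\pi}{2}+\pi\mathbb{Z})$ containing $0$ and can move to an adjacent component only at a zero of $\Re f(\cdot+iU)$ --- is sound, using that $f(\tau+iU)\ne 0$ because $U$ is not the ordinate of a zero. Both proofs share the same small imprecision that counting all $n$ zeros inside the circle of radius $\tfrac12+\eta$ about $1+\eta$ tacitly requires $\sigma\ge\tfrac12$ (as holds in every application in the paper), so this is not a defect of your argument relative to the paper's. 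Net effect: your route trades the Dirichlet-approximation device for a slightly more careful argument-variation count, and both yield the stated inequality.
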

\begin{proof}[Proof of Proposition \ref{Backlund}]
Let $\eta >0$. 
We define $\arg f(1+\eta)=0$, and $\arg f(s) = \arctan \frac{\Im f(s)}{\Re f(s)}$ for $\Re(s)=1+\eta$,
since, by \eqref{lbd}, $\Re(f(s))$ does not vanish on $\Re(s) =1+\eta$.  It follows that 
\begin{equation}
  \label{f1eta}
  |\arg f(1+\eta+iU)| < \frac{\pi}{2}. 
\end{equation}
Recall that $\arg f(\sigma+iU)$ is defined by continuous variation, moving along the line $\mathcal{C}$
 from $1+\eta+iU$ 
to $\sigma +iU$.  It follows that 
\begin{equation}\label{bnd-fsigmaU}
  |\arg f(\sigma +i U)| \le \left| \Delta_{\mathcal{C}} \arg f(s) \right|+ \frac{\pi}{2}. 
\end{equation}
We now bound the argument change on $\mathcal{C}$. 
Let $N \in \mathbb{N}$ and let
\begin{equation}
   \label{def-FN}
   F_N(w) =\frac{1}{2}( f(w+iU)^N + f(w-iU)^N).
\end{equation}
Since $f(s)$ is real when $s$ is real, the reflection principle gives 
$F_N(\sigma) = \Re \, f(\sigma+iU)^N$ for all $\sigma$ real. 
Suppose $F_N(\sigma)$ has $n$ real zeros in the interval
$[\sigma, 1+\eta]$.  These zeros partition the interval
into $n+1$ subintervals.  On each of these subintervals $\arg f(\sigma+iU)^N$
can change by at most $\pi$, since $\Re \, f(\sigma +iU)^N$ is
nonzero on the interior of each subinterval.  It follows that 
\begin{equation}
   \label{changeargC}
   |\Delta_{\mathcal{C}} \arg f(s)|=
   \frac{1}{N} | \Delta_{\mathcal{C}} \arg f(s)^N|
   \le \frac{(n+1) \pi}{N}. 
\end{equation}
We now provide an upper bound for $n$.  
Jensen's theorem asserts that 
\[
   \log|F_N(1+\eta )| + \int_{0}^{1+2\eta} \frac{n(u)du}{u} =  \frac{1}{2 \pi} \int_{-\frac{\pi}{2}}^{\frac{3 \pi}{2}}
    \log |F_N(1+\eta +(1+2\eta) e^{i \theta}| d \theta,
\]
where $n(u)$ denotes the number of zeros of $F_N(z)$ in the circle centered at $1+\eta $ of radius $u$. 
Observe that $n(u) \ge n$ for  $u \ge \frac{1}{2}+\eta$ and thus
\begin{equation}
   \label{jensen}
    n \log 2 \le \frac{1}{2 \pi} \int_{-\frac{\pi}{2}}^{\frac{3 \pi}{2}}
    \log |F_N(1+\eta +(1+2\eta) e^{i \theta}| d \theta
    - \log|F_N(1+\eta )|. 
\end{equation}
Trivially from \eqref{def-FN}, 
\[
|F_N(1+\eta +(1+2\eta) e^{i \theta})| \le  \frac{1}{2} 
    \sum_{j=0}^{1}   |f(1+\eta +(1+2\eta) e^{i \theta}+(-1)^j iU)|^N, 
\]
so for the left part of the contour in \eqref{jensen},
\begin{equation}
  \label{left}
 \int_{\frac{\pi}{2}}^{\frac{3 \pi}{2}}
    \log |F_N(1+\eta +(1+2\eta) e^{i \theta}| d \theta  
\le 
\int_{\frac{\pi}{2}}^{\frac{3 \pi}{2}}
    \log \Big( 
   \frac{1}{2} 
    \sum_{j=0}^{1}   |f(1+\eta +(1+2\eta) e^{i \theta}+(-1)^j iU)|^N
    \Big) \, d\theta.
\end{equation}
For the right part of the contour in \eqref{jensen}, we have $ -\frac{\pi}{2} \le \theta \le \frac{\pi}{2}$, so $\Re(1+\eta +(1+2\eta)e^{i\theta})\ge 1+\eta$. We  apply \eqref{ubd1} and obtain
\begin{equation}
\label{right}
   \frac{1}{2 \pi} \int_{-\frac{\pi}{2}}^{\frac{\pi}{2}}    \log |F_N(1+\eta +(1+2\eta) e^{i \theta}| d \theta 
    \le \frac{N}{2} \log M.
\end{equation}
To complete our bound for $n$, we require a lower bound for $\log|F_{N_k}(1+\eta )|$.\\
We write $\displaystyle{f(1+\eta +iU)=re^{i \phi} }$ and then choose (by Dirichlet's approximation theorem) an increasing  sequence of positive integers $N_{k}$  tending to infinity such that $N_{k} \phi$ tends to $0$ modulo 
$2 \pi$. 
Since $\displaystyle{ \frac{F_{N_k}(1+\eta)}{|f(1+\eta+iU)|^{N_k}} =  \frac{r^{N_k} \cos(N_k\phi)}{r^{N_k}} }$, it follows that 
$\displaystyle{   \lim_{ k \to \infty} \frac{F_{N_k}(1+\eta)}{|f(1+\eta+iU)|^{N_k}}   = 1 }$. 
Thus we derive  
\[
\log |F_{N_k}(1+\eta)| \ge N_k \log |f(1+\eta+iU)| +o_k(1) ,
\]
where the term $o_k(1)\rightarrow 0$ as $k \rightarrow \infty$. 
Together with \eqref{lbd}, we obtain
\begin{equation}
  \label{lblogF}
\log |F_{N_k}(1+\eta)| \ge N_k \log m +o_k(1).
\end{equation}
Then \eqref{jensen}, \eqref{left}, \eqref{right}, and \eqref{lblogF} give
\begin{multline}
\label{nlog2B}
  n  \log 2  
\le \frac{1}{2 \pi}   \int_{\frac{\pi}{2}}^{\frac{3 \pi}{2}} \log \Big( \frac{1}{2} \sum_{j=0}^{1}   |f(1+\eta +(1+2\eta) e^{i \theta}+(-1)^j iU)|^{N_k} \Big) \, d\theta 
\\+ \frac{N_k  \log M }{2} - N_k\log m +o_k(1). 
\end{multline} 
By \eqref{changeargC} it follows that 
\[
  \left| \Delta_{\mathcal{C}} \arg f(s) \right|
   \le \frac{\pi}{\log 2} \mathscr{L}_k   + \frac{ \pi \log M}{2 \log 2}-\frac{ \pi \log m}{\log 2} +o_k(1) ,
\]
where $\mathscr{L}_k $ is defined by \eqref{def-Lk} . 
We conclude by combining this with \eqref{bnd-fsigmaU}. 
\end{proof}
We derive the following Corollary for $\arg h_X(s)$ from Proposition \ref{Backlund}.  
\begin{cor}  \label{arghX}
Let $\eta_0=0.23622\ldots$, $\eta \in [\eta_0, \frac{1}{2})$, and 
$X \ge 10^9$. Assume that $U \ge H \ge 1002$ and that $U$ is not the ordinate of a zero of $h_X(s)$. 
Then for all $\tau\in(0,1+\eta]$,
\begin{multline*}
   |\arg h_X(\tau+iU)|  
 \le \frac{(1+2\eta)}{\log 2} \log  \Big( \frac{b_{8}(\eta,H)}{2\pi}U \Big) 
 + \frac{\pi (1+ \eta) }{\log 2} ( \log X) +
  \frac{\pi \log  b_7(k,\eta,H_0) }{2\log 2}  
 + \frac{\pi \log b_{5}(\eta)}{2\log2} \\ - \frac{\pi \log(1-b_{6}(10^9,\eta)^2)}{\log 2} +\frac{\pi}2,
\end{multline*}
where $b_{5},b_{6},b_{7},b_{8}$ are defined in \eqref{def-b5}, \eqref{bound-logh-def-b6}, \eqref{def-b7}, and \eqref{def-b8}.
\end{cor}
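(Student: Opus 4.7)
The plan is to invoke Proposition \ref{Backlund} with $f = h_X$, and then to compute the constants $M$, $m$, and the integral $\mathscr{L}_k$ explicitly. First I verify the hypotheses: $h_X(s) = \zeta(s)M_X(s)(2-\zeta(s)M_X(s))$ is meromorphic with a single double pole at $s=1$ and real on the real axis. For $U \ge H \ge 1002$, the closed disk $\{|w-(1+\eta+iU)| \le 1+2\eta\}$ has distance at least $U-(1+2\eta) > 999$ from that pole, so the Jensen-theorem step in the proof of Proposition \ref{Backlund} applies without modification.

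For the upper bound $M$: when $\Re(s) \ge 1+\eta$ one has $|\zeta(s)|, |M_X(s)| \le \zeta(1+\eta)$, hence $|h_X(s)|$ is dominated by a constant that becomes $b_7(k,\eta,H_0)$. For the lower bound $m$ on $|\Re h_X(1+\eta+it)|$: writing $\Re h_X = 1-\Re(f_X^2) \ge 1-|f_X|^2$ and bounding $|f_X(1+\eta+it)| \le \sum_{n>X} d(n)\,n^{-(1+\eta)}$ via Lemma \ref{divisorsum} gives a bound $b_6(X,\eta)$ that decreases in $X$; for $X \ge 10^9$ and $\eta \ge \eta_0 = 0.23622\ldots$ this stays strictly below $1$, so $m = 1 - b_6(10^9,\eta)^2 > 0$.

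The heart of the argument is bounding $\mathscr{L}_k$. By concavity of $\log$, as $N_k \to \infty$ one has $\tfrac{1}{N_k}\log(\tfrac12\sum_{j=0}^{1}|h_X(w_j(\theta))|^{N_k}) \le \log\max_j|h_X(w_j(\theta))| + o_k(1)$, where $w_j(\theta) = 1+\eta+(1+2\eta)e^{i\theta}+(-1)^j iU$. On the semicircle $\theta \in [\pi/2,3\pi/2]$ one has $\Re w_j = 1+\eta+(1+2\eta)\cos\theta \in [-\eta,1+\eta]$, so Rademacher's Lemma \ref{bndLschi} applies to $\zeta(w_j)$, while $|M_X(w_j)| \le \sum_{n \le X} n^{\eta} \le X^{1+\eta}/(1+\eta)$ holds uniformly on the contour. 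Using $1+|\zeta M_X| \le (1+|\zeta|)(1+|M_X|)$ together with the trivial estimate $|h_X| \le 2(1+|\zeta M_X|)^2$ absorbs the factor $|2-\zeta M_X|$ and yields a pointwise bound of the form
\[
\log|h_X(w_j(\theta))| \le \log b_5(\eta) + 2(1+\eta)\log X - (1+2\eta)\cos\theta \,\log\bigl(b_8(\eta,H)\,U/(2\pi)\bigr) + O_\eta(1),
\]
with $b_8(\eta,H)$ chosen so that $|1+w_j|/(2\pi) \le b_8(\eta,H)U/(2\pi)$ on the contour (the condition $U \ge H$ controls $|1 \pm w_j|/U$), and $b_5(\eta)$ packaging the residual $\log 3$, $\log\zeta(1+\eta)$, $\log(1+\eta)$, $|1+w_j|/|1-w_j|$, and similar bounded quantities.

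Integration over $[\pi/2,3\pi/2]$ uses $\int_{\pi/2}^{3\pi/2}(-\cos\theta)\,d\theta = 2$ together with the interval length $\pi$. Dividing the result by $2\pi$ and multiplying by $\pi/\log 2$ per Proposition \ref{Backlund} converts the $\zeta$-term into $\frac{(1+2\eta)}{\log 2}\log(b_8 U/(2\pi))$, the $M_X$-term into $\frac{\pi(1+\eta)}{\log 2}\log X$, and the $b_5$-contribution into $\frac{\pi\log b_5}{2\log 2}$; adding in the Proposition's $\frac{\pi\log b_7}{2\log 2} - \frac{\pi\log(1-b_6^2)}{\log 2} + \pi/2$ produces exactly the stated inequality. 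The main obstacle will be the careful explicit bookkeeping of all the bounded $\theta$-integrals and additive constants so that they package precisely into $b_5(\eta)$, $b_7(k,\eta,H_0)$, and $b_8(\eta,H)$ as defined in \eqref{def-b5}, \eqref{def-b7}, and \eqref{def-b8}.
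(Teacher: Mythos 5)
Your proposal follows the paper's proof essentially step for step: apply Proposition \ref{Backlund} to $f=h_X$, take $M$ from the half-plane bound, $m=1-b_6(10^9,\eta)^2$ from $|\Re h_X|\ge 1-|f_X|^2$ and Lemma \ref{divisorsum}, and bound $\mathscr{L}_k$ on the left semicircle via Rademacher's convexity estimate together with the trivial bound on $M_X$, using $\int_{\pi/2}^{3\pi/2}(-\cos\theta)\,d\theta=2$. The one substantive discrepancy is that you have interchanged the roles of $b_5$ and $b_7$: in the paper $M=b_5(\eta)=\frac{\zeta(1+\eta)^4}{\zeta(2+2\eta)^2}+\frac{2\zeta(1+\eta)^2}{\zeta(2+2\eta)}$ is the half-plane bound (obtained from $|M_X(s)|\le\sum_n|\mu(n)|n^{-(1+\eta)}=\zeta(1+\eta)/\zeta(2+2\eta)$, which is sharper than your $|M_X|\le\zeta(1+\eta)$), while $b_7(k,\eta,H_0)$ is the constant packaging the Rademacher factor $3.006\,\zeta(1+\eta)$ on the semicircle. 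Because both constants enter the final inequality with the identical coefficient $\frac{\pi}{2\log 2}$, the swap does not change the stated bound; but as written your half-plane step does not actually establish $\sup_{\Re s\ge 1+\eta}|h_X|\le b_7$ (your crude bound gives roughly $\zeta(1+\eta)^4+2\zeta(1+\eta)^2$, which exceeds $b_7$ for $\eta$ near $\eta_0$), and the quantities you propose to package into $b_5$ on the semicircle are exactly those the definitions place in $b_7$. Relabelling the two constants, and using the sharper $M_X$ bound on the half-plane, makes your argument coincide with the paper's.
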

\begin{proof}[Proof of Corollary]
We apply Proposition \ref{Backlund} to $f=h_X$ as defined in \eqref{def-hXs}:
\[ h_{X}(s) = 1-f_{X}(s)^2 = \zeta(s) M_{X}(s) (2-\zeta(s)M_{X}(s)) . \]
Let $\s \ge \eta+1$ and $t\in \R$. We establish an upper bound for $|h_X(\s+it)|$.
The triangle inequality in conjunction with 
$\displaystyle{ |\zeta(s)| \le \zeta(1+\eta) }$ 
and with
$ \displaystyle{  |M_X(s)| \le \sum_{n=1}^{\infty} \frac{|\mu(n)|}{n^{1+\eta}} = \frac{\zeta(1+\eta)}{\zeta(2+2 \eta)} }$
give 
\begin{equation}\label{upper-hX}
|h_X(s)| \le b_{5}(\eta)
\end{equation}
with
\begin{equation} \label{def-b5}
b_{5}(\eta) =\frac{\zeta(1+\eta)^4}{\zeta(2+2 \eta)^2} +  \frac{2 \zeta(1+\eta)^2}{\zeta(2+2 \eta)}.
\end{equation}
We now give a lower bound for $|\Re h_X(1+\eta+it)|$.
We use the reverse triangle inequality $|h_X(s)| \ge 1 - |f_X(s)|^2$.
It remains to provide an upper bound for $|f_X(s)|$.  
Trivially from \eqref{def-fXs}, 
\[
  |f_X(s)|  \le
   \sum_{n > X}  \frac{|\lambda_{X}(n)|}{n^{1+\eta}}
  \le \sum_{n > X}  \frac{d(n)}{n^{1+\eta}} ,
\]
and by Lemma \ref{divisorsum}, we obtain  
\begin{equation}\label{bound-logh-def-b6}
|f_X(1+\eta+it)|  \le b_{6}(X,\eta) = \frac{(1+\eta) (\log X)}{ \eta X^{\eta}}  
   \Big(  1 +\frac{1}{\eta \log X} +\frac{\gamma }{ \log X}    +\frac{7 \eta}{12(1+\eta) X (\log X)} \Big).
\end{equation}
Note that $  \frac{  (\log X)}{ X^{\eta}}  $ decreases when $\eta > \frac{1}{\log X}$, which is the case since we assumed $\eta > \frac{1}{\log (10^9)}=0.048254\ldots$ and $X\ge 10^9$.
Thus  $|f_X(s)|   \le b_{6}(10^9,\eta) $
and
\begin{equation}\label{lower-hX}  
|\Re (h_X(s)) | = |1-\Re(f_X(s))^2| \ge |1-|f_X(s)|^2| \ge 1-|f_X(s)|^2 \ge 1-b_{6}(10^9,\eta)^2 .
\end{equation}
Note our assumption $\eta \ge \eta_0=0.23622\ldots$ ensures  $1-b_{6}(10^9,\eta)^2 >0$. \\
Finally, we must bound $\mathscr{L}_k $ as defined in \eqref{def-Lk} in the case $f=h_X$.  
We assume $w$ is a complex number such that $-\eta \le \Re w \le  1+\eta$ and $|\Im w| \ge U-(1+2\eta)$.  
Recall that by Lemma \ref{bndLschi} 
\[
    |\zeta(w)| \le  3 \frac{|1+w|}{|1-w|} \Big( \frac{|w+1|}{2 \pi} \Big)^{\frac{1+\eta-\Re w}{2}} \zeta(1+\eta) .
\]
Since
$
\frac{|1+w|}{|1-w|} = \Big| 1 + \frac{2}{w-1} \Big| 
\le 1 + \frac{2}{|\Im(w)|} \le  1.002$ when $|\Im(w)| \ge 1000$,
then 
\begin{equation}
   \label{zetabd}
    |\zeta(w)| \le 3.006
     \zeta(1+\eta) \Big( \frac{|w+1|}{2 \pi} \Big)^{\frac{1+\eta-u }{2}}  \text{ for } |\Im(w)| \ge 1000.
\end{equation}
From the definition \eqref{def-Mxs}, we have the trivial bound
\begin{equation}\label{MXbd}
 |M_X(w)|  \le X^{1+\eta}.
\end{equation}
It follows from 
\[|h_X(w)|  \le |\zeta(w)M_X(w)|^2 + 2 |\zeta(w)| |M_X(w)|
 ,\]
the bounds \eqref{zetabd}, \eqref{MXbd}, $ \frac{|w+1|}{2 \pi}>1, -\frac{1+\eta-\Re w }{2} <0$, and $X \ge kH_0$, 
that
\begin{equation}\label{bnd-hX}
 |h_X(w)| \le b_7(k,\eta,H_0) \Big( \frac{|w+1|}{2 \pi} \Big)^{1+\eta-u } X^{2(1+ \eta)}
  \text{ for } |\Im(w)| \ge 1000,
\end{equation}
with
\begin{equation}\label{def-b7}
b_7(k,\eta,H_0) = \left( 1 + \frac{2}{3.006 \zeta(1+\eta)(kH_0)^{1+\eta}}  \right)   \left(3.006 \zeta(1+\eta)\right)^2. 
\end{equation}
We apply this with $w=1+\eta+(1+2\eta)e^{i\theta}\pm iU$.
Since $\cos \theta\le 0$, a little calculation gives 
\[|w+1|  = |2+\eta+(1+2\eta)e^{i\theta}\pm iU|
\le  \sqrt{ (2+\eta)^2+ (1+2\eta+U)^2} 
\le b_{8}(\eta,H) U ,\]
with
\begin{equation}\label{def-b8}
b_{8}(\eta,H)=\sqrt{ \frac{(2+\eta)^2}{H^2}+ \Big(\frac{1+2\eta}H+1\Big)^2}.
\end{equation}
In addition $1+\eta-u = 1+\eta- (1+\eta+(1+2\eta)\cos \theta) =  -(1+2\eta)(\cos \theta) $,
and \eqref{bnd-hX} gives
\begin{equation}\label{bnd-hX-eta}
 |h_X(1+\eta+(1+2\eta)e^{i\theta}\pm iU)|
  \le  b_7(k,\eta,H_0)   \Big( \frac{b_{8}(\eta,H)}{2\pi}U \Big)^{-(1+2\eta)(\cos \theta)} X^{2(1+ \eta)} ,
\end{equation}
since $ | \Im(1+\eta+(1+2\eta)e^{i\theta}\pm iU) | \ge U-(1-2\eta) \ge H-2 \ge 1000$. 
We use this to bound $\mathscr{L}_k $ as defined in \eqref{def-Lk}: 
\[
\mathscr{L}_k  
\le \frac{1}{2\pi } \int_{\frac{\pi}{2}}^{\frac{3 \pi}{2}} \left(
\log  b_7(k,\eta,H_0)  
- (1+2\eta)(\cos \theta) \log  \Big( \frac{b_{8}(\eta,H)}{2\pi}U \Big) 
+ 2(1+ \eta) ( \log X) \right)
     d\theta .
\]
Calculating the integrals give
\begin{equation}\label{bnd-Lketa}
\mathscr{L}_k  
\le \frac{\log  b_7(k,\eta,H_0)  }{2 } 
+\frac{(1+2\eta)}{\pi} \log  \Big( \frac{b_{8}(\eta,H)}{2\pi}U \Big) 
+ (1+ \eta) ( \log X) .
\end{equation}
By \eqref{upper-hX} and \eqref{lower-hX} we may take $M=b_5(\eta)$ and $m=1-b_6(10^9,\eta)^2$ in 
\eqref{ubd1} and \eqref{lbd} in the case of $f(s)=h_X(s)$.   Therefore by Proposition  \ref{Backlund}
 \begin{equation}
   \left|  \arg h_X(\sigma+iU) \right|
   \le \frac{\pi}{ \log 2} \mathscr{L}_k   + \frac{\pi \log b_5(\eta)}{2 \log 2}-\frac{\pi \log(1-b_6(10^9,\eta)^2)}{\log 2} +\frac{\pi}{2} +o_k(1).
\end{equation}
Inserting the upper bound for  $\mathscr{L}_k$ from \eqref{bnd-Lketa} and letting $k \to \infty$ we complete the proof
as the $o_k(1)$ terms goes to zero.
\end{proof}
We are now in a position to bound the arguments. 
\begin{lem}\label{finalargbndzeta}
Let $0 < H \le H_0 \le  T$ and $X \le T$. 
Let 
$\eta \in (\eta_0, \frac{1}{2})$ with $\eta_0=0.23622\ldots$, 
$\sigma'$ and $\mu$ satisfying $\frac12 \le \sigma'<1 <\mu \le 1+\eta$.
Then
\begin{equation}
  \label{integralargs0}
\Big| \int_{\s'}^{\mu} \arg h_X(\tau+iT ) d\tau -  \int_{\s'}^{\mu} \arg h_X(\tau+iH )d\tau  \Big| \le C_7(\eta,H)  \, (\mu-\sigma')  (\log T) ,
\end{equation}
where 
 \begin{equation}\label{def-C7}
C_7(\eta,H) = \frac{2(1+2\eta)+2\pi (1+ \eta) }{\log 2} + \frac{b_{9}(\eta,H)  }{\log H_0}.
\end{equation} 
with $b_{9}(\eta,H)$ defined in \eqref{def-b9}.
\end{lem}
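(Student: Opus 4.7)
The plan is to bound the integrand $\arg h_X(\tau+iU)$ uniformly in $\tau\in[\sigma',\mu]$ for $U=T$ and $U=H$ by a direct application of Corollary~\ref{arghX}, and then combine via the triangle inequality. Since $H\ge 1002$ and $T\ge H_0\ge H$, the corollary applies at both values, provided we assume (as we may, by the fact that the zeros of $h_X$ are isolated and both integrals depend continuously on the imaginary ordinate) that neither $T$ nor $H$ is the ordinate of a zero of $h_X$. The hypothesis $\sigma'<\tau\le\mu\le 1+\eta$ ensures $\tau\in(0,1+\eta]$, as required by the corollary.

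First, I would apply Corollary~\ref{arghX} at $U=T$ and at $U=H$ to obtain, uniformly in $\tau\in[\sigma',\mu]$, the pointwise bound
\[
|\arg h_X(\tau+iU)| \;\le\; \frac{1+2\eta}{\log 2}\log U \;+\; \frac{\pi(1+\eta)}{\log 2}\log X \;+\; \mathcal{D}(k,\eta,H,H_0),
\]
where $\mathcal{D}$ gathers the remaining $U$-independent contributions from the corollary, namely $\frac{1+2\eta}{\log 2}\log(b_{8}(\eta,H)/(2\pi))$, $\frac{\pi\log b_{7}(k,\eta,H_0)}{2\log 2}$, $\frac{\pi\log b_{5}(\eta)}{2\log 2}$, $-\frac{\pi\log(1-b_{6}(10^{9},\eta)^2)}{\log 2}$, and $\pi/2$. (Recall that $X\le T$ with $X\ge 10^9$ is ensured by the standing conditions on $k$.)

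Next, by the triangle inequality and the uniform pointwise bound,
\[
\left|\int_{\sigma'}^{\mu}\arg h_X(\tau+iT)\,d\tau - \int_{\sigma'}^{\mu}\arg h_X(\tau+iH)\,d\tau\right| \le (\mu-\sigma')\Bigl[\tfrac{1+2\eta}{\log 2}(\log T+\log H) + \tfrac{2\pi(1+\eta)}{\log 2}\log X + 2\mathcal{D}\Bigr].
\]
Using $\log H\le\log H_0\le\log T$ and $\log X\le\log T$ collapses the leading terms to $\frac{2(1+2\eta)+2\pi(1+\eta)}{\log 2}\log T$. The residual constant $2\mathcal{D}$ is majorized by $\frac{2\mathcal{D}}{\log H_0}\log T$ since $T\ge H_0$, so repackaging $2\mathcal{D}$ together with any leftover subdominant pieces as a single constant $b_{9}(\eta,H)$ yields the claimed bound with
\[
C_7(\eta,H) \;=\; \frac{2(1+2\eta)+2\pi(1+\eta)}{\log 2} \;+\; \frac{b_{9}(\eta,H)}{\log H_0}.
\]

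No step is truly an obstacle; the argument is essentially bookkeeping once Corollary~\ref{arghX} is in hand. The only care required is to package every term that is \emph{not} of shape $(\text{constant})\cdot\log T$ (constants, $\log H$, $\log X$, and $\log(b_{8}/(2\pi))$) cleanly inside the single constant $b_{9}(\eta,H)$, so that the main coefficient $\frac{2(1+2\eta)+2\pi(1+\eta)}{\log 2}$ is isolated exactly as in the statement.
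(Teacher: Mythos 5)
Your proposal is correct and follows essentially the same route as the paper: bound the difference of integrals by $(\mu-\sigma')$ times the pointwise sum $|\arg h_X(\tau+iT)|+|\arg h_X(\tau+iH)|$, apply Corollary~\ref{arghX} at $U=T$ and $U=H$, then absorb $\log H$, $\log X$ into $\log T$ and the residual constants into $b_9(\eta,H)/\log H_0$ using $T\ge H_0$. Your constant $2\mathcal{D}$ is exactly the paper's $b_9(\eta,H)$, so the bookkeeping matches term for term.
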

%
\begin{proof}
Note that 
\begin{equation}
  \label{integralargs}
 \Big| \int_{\s'}^{\mu} \arg h_X(\tau+iT ) d\tau -  \int_{\s'}^{\mu} \arg h_X(\tau+iH )d\tau \Big|
 \le (\mu-\s') \max_{\tau \in (\s',\mu)} \Big( | \arg h_X(\tau+iT )| + |\arg h_X(\tau+iH )| \Big). 
\end{equation}
By Corollary \ref{arghX} we have
\begin{equation}
  \label{sumarguments}
  |\arg h_X(\tau +iH)  | +   |\arg h_X(\tau+iT)  |
 \\
 \le b_{9}(\eta,H) 
+\frac{(1+2\eta)}{\log 2} (\log (HT)  )
+ \frac{2\pi (1+ \eta) }{\log 2} ( \log X) 
\end{equation}
with
\begin{equation}\label{def-b9}
b_{9}(\eta,H) =  \frac{\pi \log  b_7(k,\eta,H_0) }{\log 2}  
 + \frac{\pi \log b_{5}(\eta)}{\log2} 
 - \frac{2\pi \log(1-b_{6}(10^9,\eta)^2)}{\log 2} 
 + \pi 
+ \frac{2(1+2\eta)}{\log 2} \log  \Big( \frac{b_{8}(\eta,H)}{2\pi} \Big) 
\end{equation}
where $b_7,b_5,b_6,b_8$ are defined in \eqref{def-b7}, \eqref{def-b5}, \eqref{bound-logh-def-b6}, \eqref{def-b8}. 
Factoring $\log T$ in the right hand side of  \eqref{sumarguments}, using $H \le T$, $X \le T$, and $H_0 \le T$
yields
\begin{equation}
  \label{sumarguments2}
  |\arg h_X(\tau +iH)  | +   |\arg h_X(\tau+iT)  |
 \le 
(\log T)
\left( 
\frac{2(1+2\eta)+2\pi (1+ \eta) }{\log 2} 
+ \frac{b_{9}(\eta,H)  }{\log H_0}
\right).
\end{equation}
Combining \eqref{integralargs} and \eqref{sumarguments2} leads to \eqref{integralargs0}.
\end{proof}
\subsection{Explicit lower bounds for $\int_{H}^{T} \log | h_X(\mu+it)| dt$}
First, observe that  \eqref{bound-logh-def-b6}  implies for 
\begin{equation}
  \mu \ge 1+\eta_0 =1.23622\ldots, \ |f_X(\mu+it)|<1.
\end{equation}
This fact is used in the next lemma. 
\begin{lem}\label{lowerbndlog}
Assume $\mu \ge 1+\eta_0$ where $\eta_0=0.23622 \ldots$. 
Let $X =kT$ where $T \ge H_0$, $k$ satisfies  \eqref{def-parameters}, $k \le 1$, and $2\pi m_0 \le  H < T$. 
Then
 \begin{equation}
 -\int_{H}^{T} \log | h_X(\mu+it)| dt \le C_8(k,\mu) (\log T).
 \end{equation}
with
\begin{equation}\label{def-C8}
 C_8(k,\mu) =  b_{10}(k,\mu)  \frac{(\log (kH_0))^2}{(kH_0)^{2\mu-2}} \left( \frac{4\mu b_{11}(kH_0,2\mu) }{k(2\mu-1) }  + \frac{2\pi m_0 (2\mu-1) b_{11}(kH_0,2\mu-1)}{(\mu-1)}  \right),
\end{equation}
$b_{10}$ is defined in \eqref{def-b10}, $b_{11}$ in \eqref{def-b11}, and $m_0$ in \eqref{def-m0}.
\end{lem}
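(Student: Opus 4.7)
My plan is as follows.  The first step is a pointwise reduction of $\log|h_X|$ to the squared modulus $|f_X|^2$.  Since $\mu\ge 1+\eta_0$ and $X\ge kH_0\ge 10^9$, the remark preceding the lemma, together with the monotonicity of $b_6(X,\eta)$ in $X$ for $X\ge 10^9$, gives $|f_X(\mu+it)|\le b_6(kH_0,\mu-1)<1$ uniformly in $t$.  Hence $|h_X(\mu+it)|=|1-f_X(\mu+it)^2|\ge 1-|f_X(\mu+it)|^2>0$, and the elementary inequality $-\log(1-y)\le\bigl(-\log(1-y_0)/y_0\bigr)y$ for $0\le y\le y_0<1$ (which follows from the monotonicity of $y\mapsto -\log(1-y)/y$ on $[0,1)$), applied with $y_0=b_6(kH_0,\mu-1)^2$, yields
\[ -\log|h_X(\mu+it)|\le b_{10}(k,\mu)\,|f_X(\mu+it)|^2, \]
where $b_{10}(k,\mu)=-\log(1-b_6(kH_0,\mu-1)^2)/b_6(kH_0,\mu-1)^2$.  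Integrating over $[H,T]$ reduces the lemma to an estimate for $\int_H^T|f_X(\mu+it)|^2\,dt$.

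The second step is to apply the mean value theorem for Dirichlet polynomials in the form \eqref{bound-MV-T1T2} of Lemma \ref{ExplicitMV} to $f_X(\mu+it)=\sum_{n>X}\lambda_X(n)n^{-\mu-it}$:
\[ \int_H^T|f_X(\mu+it)|^2\,dt\le (T-H)\sum_{n>X}\frac{\lambda_X(n)^2}{n^{2\mu}}+2\pi m_0\sum_{n>X}\frac{\lambda_X(n)^2(n+1)}{n^{2\mu}}. \]
Splitting $(n+1)/n^{2\mu}=n^{-(2\mu-1)}+n^{-2\mu}$ and using $|\lambda_X(n)|\le d(n)$, I would bound each tail by \eqref{divisorsum2}.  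Writing the bracket in \eqref{divisorsum2} as $(\log X)^2\cdot b_{11}(X,\tau)$ with
\[ b_{11}(X,\tau)=\frac{\log X}{\tau-1}+\frac{3}{(\tau-1)^2}+\frac{6}{(\tau-1)^3\log X}+\frac{6}{(\tau-1)^4(\log X)^2}, \]
gives $\sum_{n>X}\lambda_X(n)^2/n^\tau\le 2\tau(\log X)^2b_{11}(X,\tau)/X^{\tau-1}$ for $\tau\in\{2\mu-1,2\mu\}$.

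The third step is to substitute $X=kT$ and convert $T$-dependent bounds to $H_0$-dependent ones.  Each resulting term has the form $(\log kT)^j/T^{2\mu-2}$ with $j\le 3$, up to multiplicative constants in $k$, $\mu$, and $b_{11}$; the $(T-H)$ contribution uses $T/(kT)^{2\mu-1}=1/(k\,k^{2\mu-2}T^{2\mu-2})$, and the $2\pi m_0$ contributions yield $1/(kT)^{2\mu-2}$ directly.  Each such function of $T$ is strictly decreasing once $(2\mu-2)\log(kT)>j$, which holds comfortably for all $T\ge H_0$ since $\log(kH_0)\ge\log 10^9>20$, $2\mu-2\ge 2\eta_0>0.47$, and $j\le 3$.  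Thus each factor is maximized at $T=H_0$, replacing $(\log kT)^j/T^{2\mu-2}$ by $k^{2\mu-2}(\log kH_0)^j/(kH_0)^{2\mu-2}$, which (after the $k$-powers collapse) produces the $(\log kH_0)^2b_{11}(kH_0,\tau)/(kH_0)^{2\mu-2}$ prefactor appearing in \eqref{def-C8}.  Finally, since $b_{10}\ge 1$ (because $-\log(1-y)\ge y$ on $[0,1)$) and $\log T\ge\log H_0>2\mu-1$, one may insert the free factor $\log T/(2\mu-1)\ge 1$ (or $\log T\cdot(\mu-1)$-type factors in the second summand) to recognize the final bound in the form $C_8(k,\mu)\log T$.

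The hardest part will be the bookkeeping in the third step: verifying the monotonicity threshold $(2\mu-2)\log(kT)>j$ rigorously, organizing the power-of-$(kH_0)$, $(\log kH_0)$, and $b_{11}$ factors coming from the $(T-H)$ and $2\pi m_0$ contributions so that they align with the two summands $\frac{4\mu b_{11}(kH_0,2\mu)}{k(2\mu-1)}$ and $\frac{2\pi m_0(2\mu-1)b_{11}(kH_0,2\mu-1)}{\mu-1}$ inside \eqref{def-C8}, and absorbing the lower-order term $2\pi m_0\sum_{n>X}\lambda_X(n)^2/n^{2\mu}$ (coming from the second piece of the split of $(n+1)/n^{2\mu}$) into the Term-1 contribution without losing control of the constants.
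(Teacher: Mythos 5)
Your overall route is the same as the paper's: reduce $-\log|h_X(\mu+it)|$ to $b_{10}(k,\mu)|f_X(\mu+it)|^2$ via the monotonicity of $u\mapsto -\log(1-u^2)/u^2$ and the uniform bound $|f_X(\mu+it)|\le b_6(kH_0,\mu-1)<1$, then apply Lemma \ref{ExplicitMV} together with $|\lambda_X(n)|\le d(n)$ and the divisor-sum bound \eqref{divisorsum2}, and finally use $X=kT$ with monotonicity in $T$. Steps one and two are correct and match the paper (your regrouping $(T-H+2\pi m_0)\sum_{n>X}d(n)^2 n^{-2\mu}+2\pi m_0\sum_{n>X}d(n)^2 n^{-(2\mu-1)}$, using $H\ge 2\pi m_0$, is exactly the paper's).

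There are, however, two concrete problems in your step three. First, your $b_{11}$ is not the one defined in \eqref{def-b11}: the paper normalizes so that $\sum_{n\ge X}d(n)^2 n^{-\tau}\le \frac{(\log X)^3}{X^{\tau-1}}\cdot\frac{2\tau}{\tau-1}\,b_{11}(X,\tau)$ with $b_{11}\to 1$, whereas yours carries an extra factor $\frac{\log X}{\tau-1}$; since the lemma's $C_8$ is stated with the paper's $b_{11}$, your constant will not match. Second, and more seriously, your mechanism for producing the final $\log T$ is wrong for this target: you propose to bound the entire quantity $\frac{(\log kT)^3}{(kT)^{2\mu-2}}$ at $T=H_0$ and then multiply by the ``free factor'' $\log T/(2\mu-1)\ge 1$. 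That yields a valid inequality, but with $\frac{(\log kH_0)^3}{(2\mu-1)(kH_0)^{2\mu-2}}$ in place of the $\frac{(\log kH_0)^2\log T}{(kH_0)^{2\mu-2}}$ structure of \eqref{def-C8}; the result exceeds the stated $C_8(k,\mu)\log T$ by roughly a factor $\frac{\log(kH_0)}{2\mu-1}$ (about an order of magnitude here). The correct step, as in the paper, is to peel off one logarithm first: write $\frac{(\log kT)^3}{(kT)^{2\mu-2}}=\frac{(\log kT)^2}{(kT)^{2\mu-2}}\cdot\log(kT)$, apply the monotonicity in $T$ only to $\frac{(\log kT)^2}{(kT)^{2\mu-2}}$ (which is decreasing for $T\ge H_0$ since $\log(kH_0)>\frac{1}{\mu-1}$), and bound the remaining $\log(kT)$ by $\log T$ using $k\le 1$. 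With that correction, and with $b_{11}$ taken from \eqref{def-b11}, your argument lands exactly on \eqref{def-C8}.
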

\begin{proof}
We begin by remarking that \eqref{bound-logh-def-b6} implies $|f_X(\mu+it)| \le b_{6}(kH_0,\mu-1)<1$
since $X \ge kH_0 \ge 10^9$ and  $\mu \ge 1+\eta_0$.
Next, observe that $
   |h_X(\mu+it)| \ge |1 - f_X(\mu+it)^2| \ge 1 - |f_X(\mu+it)|^2$
and thus   
\begin{equation}
  \label{loghX}
 -\log|h_X(\mu+it)| \le -\log(1-|f_X(\mu+it)|^2).
\end{equation}
Since $ - \frac{\log(1-u^2)}{u^2} $ increases with $u\in(0,1)$, we have 
\begin{equation}
  \label{logfX}
-\log(1-|f_X(\mu+it)|^2)\le b_{10}(k,\mu) |f_X(\mu+it)|^2 ,
\end{equation} 
with
\begin{equation}\label{def-b10}
b_{10}(k,\mu) = - \frac{\log\left(1-b_{6}(kH_0,\mu-1)^2\right)}{b_{6}(kH_0,\mu-1)^2} 
\end{equation}
where $b_6$ is defined in \eqref{bound-logh-def-b6}.
It follows from \eqref{loghX} and \eqref{logfX} that 
\begin{equation}
   \label{bdinthxmu}
   -\int_{H}^{T} \log | h_X(\mu+it)| dt 
 \le b_{10}(k,\mu) \int_{H}^{T} |f_X(\mu+it)|^2 dt. 
\end{equation}
We apply Lemma \ref{ExplicitMV} and the bound $|\lambda_X(n)| \le d(n)$ with $\lambda_{X}(n)=0$ if $n \le X$. 
We obtain
\begin{align*}
    \int_{H}^{T} |f_X(\mu+it)|^2 dt & \le \sum_{n=1}^{\infty} \frac{|\lambda_X(n)|^2}{n^{2\mu}} (T-H + 2\pi m_0(n+1)) \\
    & \le (T-H +2\pi m_0) \sum_{n > X} \frac{d(n)^2}{n^{2\mu}}
    + 2\pi m_0 \sum_{n > X} \frac{d(n)^2}{n^{2\mu-1}}.
\end{align*}
We appeal to \eqref{divisorsum2} to bound the above sums:
\[
\sum_{n\ge X}\frac{d(n)^2}{n^\tau}
\le \frac{ (\log X)^3}{ X^{\tau-1}} \frac{2\tau b_{11}(kH_0,\tau) }{(\tau-1) }  ,
\]
since $X \ge kH_0$  where
\begin{equation}\label{def-b11}
 b_{11}(X,\tau)=  1+ \frac{3 }{(\tau-1)(\log X)}+ \frac{6}{(\tau-1)^2(\log X)^2} + \frac{6}{(\tau-1)^3(\log X)^3} .
\end{equation}
Since $X=kT$ we deduce that 
\[
    \int_{H}^{T} |f_X(\mu+it)|^2 dt
 \le \frac{(\log (kT))^3}{(kT)^{2\mu-2}}
\left( \frac{4\mu b_{11}(kH_0,2\mu) }{k(2\mu-1) } 
    + \frac{2\pi m_0 (2\mu-1) b_{11}(kH_0,2\mu-1)}{(\mu-1)}  \right).
\]
Note that $\frac{(\log (kT))^2}{(kT)^{2\mu-2}}$ decreases with $T$ as long as $ 10^9 > e^{\frac1{\mu-1}}$ (i.e. $\mu>\mu_2=1.072382\ldots$). 
Using this and  $\log(kT) \le \log T$ (since $k \le 1$) implies
\begin{equation}
    \label{fxmu2bd}
    \int_{H}^{T} |f_X(\mu+it)|^2 dt 
\le \frac{(\log (kH_0))^2}{(kH_0)^{2\mu-2}} \left( \frac{4\mu b_{11}(kH_0,2\mu) }{k(2\mu-1) }  + \frac{2\pi m_0 (2\mu-1) b_{11}(kH_0,2\mu-1)}{(\mu-1)}  \right) (\log T).
 \end{equation}
We conclude by combining this with \eqref{bdinthxmu}.
\end{proof}
\subsection{Proof of Zero Density Result }
Finally, we are able to compile our bounds to obtain an upper bound for $N(\sigma, T)$. 
\begin{lem}\label{defNsigTlem}
Assume $\alpha>0,d>0, \delta >0,  \eta_0=0.23622 \ldots, \eta \in [\eta_0,\frac{1}{2}),$ and $\mu \in [1+\eta_0,1+\eta]$.
Let $H_0=3.0610046 \cdot 10^{10}, \ 
1002\le H\le  H_0 , \ \frac{10^9}{H_0} \le  k \le 1$, $T  \ge H_0$, and $X = kT$ 
Assume $\s>\frac12+\frac{d}{ \log H_0 }$, $ \mathcal{U}(\alpha, k,H_0) >1$,
and $\mathcal{U}(\alpha,k,T)$ decreases in $T$.
Thus
\begin{equation}\label{bnd-N@H0}
N(\s,T)
\le \frac{(T-H)(\log T)}{2\pi d}
\log
\left( 1 
+ \mathcal{C}_1 \frac{ 
(\log (kT))^{2\s}  (\log T)^{4(1-\s)} T^{\frac83(1-\s)}
 }{T-H}  \right)
+ \frac{ \mathcal{C}_2 }{2\pi d} (\log T)^2 ,
\end{equation}
\begin{equation}\label{bnd-Nasymp}
N(\s,T)
\le \frac{\mathcal{C}_1  }{2\pi d}
(\log (kT))^{2\s}  (\log T)^{5-4\s} T^{\frac83(1-\s)}
+ \frac{ \mathcal{C}_2 }{2\pi d} (\log T)^2 ,
\end{equation}
with
\begin{align}
 \label{def-mathcalC1}
\mathcal{C}_1= \mathcal{C}_1(\alpha, d,\del, k, H, \s)  & = 
 b_{12}(H)  e^{\frac{8}{3} \delta (2 \sigma-1) M(k,\delta) + \frac{ 4 \delta (2 \sigma-1)\log \log H_0}{\log(kH_0)+2 \delta} }     \mathcal{U}(\alpha, k,H_0)^{2(1-\s) +\frac{2d}{\log H_0 }+ \frac{2 \delta (2 \sigma-1)}{\log(kH_0)+2 \delta}} \times \\
 & \nonumber
  \mathcal{V}(\alpha, k,{\del} ,H_0)^{2\s-1}  e^{\frac{2d (2 \log\log H_0 -\log \log (kH_0))}{\log H_0 }+\frac{8d}{3 } +2\alpha},
\\  \label{def-mathcalC2} \mathcal{C}_2  =\mathcal{C}_2 (d, \eta, k, H, \mu,\s) & =  C_7(\eta,H) 
\Big(\mu- \s+\frac{d}{\log H_0 }\Big)   +   C_8(k,\mu) ,
\end{align}
and $\mathcal{U}, \mathcal{V}, M(k,\delta), C_7, C_8$ and $b_{12}$ are respectively defined in \eqref{def-mathcalU}, \eqref{def-mathcalV}, \eqref{Mkdelta}, \eqref{def-C7}, \eqref{def-C8}, and \eqref{def-b12}.\\
\end{lem}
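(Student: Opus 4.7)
The plan is to start from inequality \eqref{boundN2} applied to $h=h_X$ with $X=kT$, and then bound each of the four integrals on the right-hand side using the lemmas assembled in the previous subsections. The critical choice of parameter is $\sigma' = \sigma - \tfrac{d}{\log T}$. Since $T \ge H_0$ and $\sigma > \tfrac12 + \tfrac{d}{\log H_0}$, we have $\sigma' > \tfrac12$, and moreover $\tfrac{1}{2\pi(\sigma-\sigma')} = \tfrac{\log T}{2\pi d}$, which will produce the prefactors $\tfrac{(T-H)(\log T)}{2\pi d}$ and $\tfrac{(\log T)^2}{2\pi d}$ appearing in \eqref{bnd-N@H0}.

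For the first integral, apply \eqref{Jensenbnd} with $\sigma'$ to obtain
\[
  \int_{H}^{T}\log|h_X(\sigma'+it)|\,dt \le (T-H)\log\!\left(1+\tfrac{1}{T-H}\bigl(F_X(\sigma',T)-F_X(\sigma',H)\bigr)\right),
\]
and then invoke Lemma \ref{F1TF1Hbnd} to bound $F_X(\sigma',T)-F_X(\sigma',H)$. This produces the factor $(\log(kT))^{2\sigma'}(\log T)^{4(1-\sigma')}T^{8(1-\sigma')/3}$, divided by $2(\omega_2(\sigma',T,\alpha))^2$. The next step is to translate each $\sigma'$ in the exponents back to $\sigma$: writing $\sigma' = \sigma - \tfrac{d}{\log T}$, the ratios $T^{8d/(3\log T)} = e^{8d/3}$, $(\log T)^{4d/\log T} = e^{4d \log\log T/\log T}$ (bounded using $T\ge H_0$), and $(\log(kT))^{-2d/\log T} = e^{-2d \log\log(kT)/\log T}$ get absorbed into constants bounded using $T\ge H_0$, and these produce the explicit exponential factors appearing inside $\mathcal{C}_1$. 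Similarly the $\mathcal{U},\mathcal{V}$ exponents $2(1-\sigma')+\cdots$ and $2\sigma'-1-\cdots$ at $\sigma'$ become $2(1-\sigma)+\tfrac{2d}{\log T}+\cdots$ and $2\sigma-1-\tfrac{2d}{\log T}-\cdots$ at $\sigma$; since $\mathcal{U}(\alpha,k,T)\ge 1$ decreases in $T$ and $\mathcal{V}$ may be bounded by its value at $H_0$, we replace $T$ by $H_0$ in these factors, giving the $\mathcal{U}(\alpha,k,H_0)$ and $\mathcal{V}(\alpha,k,\delta,H_0)$ appearing in \eqref{def-mathcalC1}. The factor $\omega_2(\sigma',T,\alpha)^{-2}$ contributes the $e^{2\alpha}$ factor (from the $e^{-\alpha}$ in \eqref{def-omega2}), along with a geometric factor $(1-1/H)^{-2}$ which we absorb into the yet-to-be-defined $b_{12}(H)$.

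For the argument terms, Lemma \ref{finalargbndzeta} directly yields
\[
  \Bigl|\int_{\sigma'}^{\mu}\arg h_X(\tau+iT)\,d\tau - \int_{\sigma'}^{\mu}\arg h_X(\tau+iH)\,d\tau\Bigr| \le C_7(\eta,H)(\mu-\sigma')(\log T),
\]
and the bound $\mu-\sigma' = \mu-\sigma+\tfrac{d}{\log T} \le \mu-\sigma+\tfrac{d}{\log H_0}$ produces the first term of $\mathcal{C}_2$ in \eqref{def-mathcalC2}. For the last integral, Lemma \ref{lowerbndlog} gives $-\int_H^T \log|h_X(\mu+it)|\,dt \le C_8(k,\mu)\log T$, contributing the $C_8(k,\mu)$ summand in $\mathcal{C}_2$. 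Multiplying everything by $\tfrac{1}{2\pi(\sigma-\sigma')} = \tfrac{\log T}{2\pi d}$ and collecting terms yields \eqref{bnd-N@H0}, and \eqref{bnd-Nasymp} follows immediately from the elementary inequality $\log(1+x)\le x$ applied to $x=\mathcal{C}_1(\log(kT))^{2\sigma}(\log T)^{4(1-\sigma)}T^{8(1-\sigma)/3}/(T-H)$.

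The main technical obstacle will be the careful bookkeeping in translating the exponents from $\sigma'$ to $\sigma$ uniformly in $T$: every ``extra'' power of $\tfrac{d}{\log T}$ in the exponents of $T$, $\log T$, and $\log(kT)$ must be converted to an explicit multiplicative constant, and every $T$-dependent factor ($\mathcal{U}$, $\mathcal{V}$, $\omega_2^{-2}$, the log-log ratios) must be replaced by its value at $T=H_0$ using the stated monotonicity hypotheses. The sign conditions on $\log k + 2\delta$ entering $M(k,\delta)$ are handled precisely by the two-case definition \eqref{Mkdelta}, so the only delicate point is verifying that replacing $T$ by $H_0$ in each factor is a genuine upper bound under the hypotheses; otherwise the proof is a direct assembly of the lemmas above.
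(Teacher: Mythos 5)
Your proposal is correct and follows essentially the same route as the paper's own proof: the same starting point \eqref{boundN2} with \eqref{Jensenbnd}, the same choice $\sigma'=\sigma-\tfrac{d}{\log T}$, the same three lemmas for the four integrals, and the same bookkeeping converting $\sigma'$-exponents to $\sigma$ and replacing $T$-dependent factors by their values at $H_0$ via the stated monotonicity of $\mathcal{U}$ and $\mathcal{V}$. The only detail the paper adds that you omit is the initial assumption that $T$ is not the ordinate of a zero, removed at the end by a continuity argument.
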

\noindent {\it Remark}. 1. The assumptions that $U(\alpha,k, H_0)>1$ and $U(\alpha,k,T)$ are decreasing can be 
removed from the theorem.  However, this would overly complicate the statement of the theorem.  
In all instances that we apply this theorem (for various values of $\alpha$ and $k$) these conditions hold.
%
%
\begin{proof}
We begin by assuming that $T$ is not the ordinate of a zero of $\zeta(s)$. 
From \eqref{boundN2}, \eqref{Jensenbnd}, and the definition \eqref{def-FX} of $F_X$, we have 
for $\sigma \in [\sigma',1]$ where $\sigma' \ge \frac{1}{2}$ and $\mu  \in [1+\eta_0,1+\eta]$
\begin{multline*}
N(\s,T)  
\le \frac{1}{2\pi(\s-\s')}   \Big( 
(T-H) \log\left( 1  +  \frac{F_X(\s',T)-F_X(\s',H)}{(T-H)}  \right)
\Big. \\ \Big.+    \int_{\s'}^{\mu} \arg h_X(\tau+iT) d\tau 
-  \int_{\s'}^{\mu} \arg h_X(\tau+iH) d\tau 
  - \int_{H}^{T} \log | h_X(\mu+it)| dt
\Big).
\end{multline*}
We apply Lemma \ref{F1TF1Hbnd}, Lemma \ref{finalargbndzeta}, and Lemma \ref{lowerbndlog} to achieve
\begin{equation}
\begin{split}
& N(\s,T)  
\le \frac{(T-H) }{2\pi(\s-\s')} \times  \\
& \log\Bigg( 1  +  \frac{e^{\frac{8}{3} \delta (2 \sigma'-1) M(k,\delta) + \frac{ 4 \delta (2 \sigma'-1)\log \log H_0}{\log(kH_0)+2 \delta} }\mathcal{U}(\alpha, k,T)^{2(1-\s')+ \frac{2 \delta (2 \sigma'-1)}{\log(kT)+2 \delta}}
\mathcal{V} (\a,k,{\del} ,T)^{2\s'-1- \frac{2 \delta (2 \sigma'-1)}{\log(kT)+2 \delta}}}{2(\omega_2(\sigma',T,\a))^2 } \times \\
& 
 \frac{ (\log (kT))^{2\s'} (\log T)^{4(1-\s')} T^{\frac83(1-\s')}}{(T-H)}  \Bigg)   +   
 \frac{\left(  C_7(\eta,H)  \, (\mu-\sigma')   +   C_8(k,\mu)\right)(\log T)}{2\pi(\s-\s')} .
\end{split}
\end{equation}
We make the choice $\s'=\s-\frac{d}{\log T }$, for some $d>0$. 
From the definition  \eqref{def-mathcalV}, we note that  $\mathcal{V} (\a,k,{\del},T )$  decreases with $T$. 
Since by assumption $ \mathcal{U}(\alpha, k,H_0) >1$ and $T \to U(k,\alpha,T)$ decreases, 
it follows that $\mathcal{U}(\alpha, k,H_0)^{\frac{2d}{\log T }+\frac{2 \delta (2 \sigma'-1)}{\log(kT)+2 \delta}
}$ decreases with $T$ 
and thus
\[
 \mathcal{U}(\alpha, k,T)^{2(1-\s')+ \frac{2 \delta (2 \sigma'-1)}{\log(kT)+2 \delta}} 
\le 
\mathcal{U}(\alpha, k,H_0)^{2(1-\s) +\frac{2d}{\log H_0 }+ \frac{2 \delta (2 \sigma'-1)}{\log(kH_0)+2 \delta}}.
\]
It may be shown that for our choice of parameters $\alpha, k, \delta$ that 
$ \mathcal{V} (\a,k,{\del} ,T) > 1$ for all $T \ge H_0$ and thus 
\[
 \mathcal{V} (\a,k,{\del} ,T)^{2\s'-1- \frac{2 \delta (2 \sigma'-1)}{\log(kT)+2 \delta}}
 \le  \mathcal{V} (\a,k,{\del} ,T)^{2\s'-1}. 
\]
In addition,
\begin{align*}  
(\log (kT))^{2\s'} (\log T)^{4(1-\s')} T^{\frac83(1-\s') } 
& =  e^{\frac{2d}{\log T} (2 \log \log T - \log \log (kT))  + \frac{8d}{3} } (\log (kT))^{2\s}  (\log T)^{4(1-\s)} T^{\frac83(1-\s)}   \\
& \le  e^{\frac{2d(2 \log \log H_0 - \log \log (kH_0))}{\log H_0 }+\frac{8d}{3 } }
(\log (kT))^{2\s}  (\log T)^{4(1-\s)} T^{\frac83(1-\s)},
\end{align*}
since $T \ge H_0$ and $\frac{10^9}{H_0} \le k \le 1$ imply $\frac{2 \log \log T-\log \log (kT)}{\log T }$  decreases in $T$.
Since $\omega_2(\sigma',T,\a)$ as defined in \eqref{def-omega2} increases with $\s'\ge \s-\frac{d}{\log H_0}$ and decreases with $T$, then
\begin{equation}\label{def-b12}
\frac1{2(\omega_2(\sigma',T,\a))^2}  \le b_{12}(H)  e^{2\alpha} \ \text{with}\ b_{12}(H)= \frac1{2 (1-\frac1H)^2}. 
\end{equation}
Combining the above inequalities establishes \eqref{bnd-N@H0}, and thus \eqref{bnd-Nasymp} (applying $\log(1+y)\le y$).  
\begin{equation}
\begin{split}
N(\s,T)  
& \le \frac{(T-H) (\log T)}{2\pi d}
\log\left( 1  +  b_{12}(H)  
e^{\frac{8}{3} \delta (2 \sigma'-1) M(k,\delta) + \frac{ 4 \delta (2 \sigma'-1)\log \log H_0}{\log(kH_0)+2 \delta} 
}  \right.  \\
& \times \mathcal{U}(\alpha, k,H_0)^{2(1-\s) +\frac{2d}{\log H_0 }+ \frac{2 \delta (2 \sigma'-1)}{\log(kH_0)+2 \delta}}   
 \mathcal{V}(\alpha, k,{\del} ,H_0)^{2\s-1} e^{\frac{2d (2 \log \log H_0 - \log \log (kH_0))}{\log H_0 }+\frac{8d}{3 } +2\alpha} \\
 & \times
 \left.
\frac{  (\log (kT))^{2\s}  (\log T)^{4(1-\s)} T^{\frac83(1-\s)} }{(T-H)}  \right) \\
& + \frac{\left(  C_7(\eta,H)  \, (\mu- \s+\frac{d}{\log H_0 })   +   C_8(k,\mu)\right)(\log T)^2}{2\pi d} .
\end{split}
\end{equation}
Since $\sigma' \le \sigma$, each remaining occurrence of $\sigma'$ may be replaced by $\sigma$. 
Finally, by a continuity argument these inequalities extend to the case where $T$ is the ordinate of a zero of the zeta function. 
\end{proof}
\section{Tables of Computation}\label{tables}
For fixed values of $\s$, Table \ref{g2asymptotick} provides bounds for $N(\s,T)$ of the shape \eqref{bnd-Nasymp}.
We fix values for $k$ in $[\frac{10^9}{H_0},1]$.
The parameters $\alpha, d, \del, \eta$  and  $H$ are chosen to make $\frac{ \mathcal{C}_1}{2\pi d}$ as small as possible with $\mathcal{C}_1(\alpha, d, \del,k, H, \s)$ as defined in \eqref{def-mathcalC1} .
The program returns $H=H_0-1$ for all lines in the table. With this $H$  we minimize of $C_7(\eta,H)$ which chooses $\eta=0.25618 \ldots$.
Then $\mu$ is chosen to minimize $\mu C_7(\eta,H)+C_8(k,\mu)$ 
(as in the definition \eqref{def-mathcalC2} of  $\mathcal{C}_2 =\mathcal{C}_2 (d, \eta, k, H, \mu,\s)$). We remark that there is a small bit of subtlety when considering $\mathcal{U}(\alpha,k,T)$, it is necessary to ensure all the coefficients in $\mathcal{J}(k,T)$ are positive and this is checked with each set of parameters used. This is to guarantee that $\mathcal{U}(\alpha,k,T)$ decreases with $T$.  \\
\begin{small}
\begin{table}[ht]
 \centering
\caption{The bound $N(\s,T)
\le A
(\log (kT))^{2\s}  (\log T)^{5-4\s} T^{\frac83(1-\s)}
+ B (\log T)^2$  \eqref{bnd-Nasymp}  for $\sigma = \sigma_0$ with $\frac{10^9}{H_0}\le k \le 1$.}
\label{g2asymptotick}
\begin{tabular}{|c|c|c|c|c|c|c|c|}
\hline
$\s_0$ & $k$ & $\mu$ &$\alpha$ &  ${\del} $ & $d$ &  $A=\frac{\mathcal{C}_1}{2\pi d}$ & $B=\frac{\mathcal{C}_2}{2\pi d}$ \\
 \hline
$ 0.60 $&$ 0.5 $&$ 1.251 $&$ 0.288 $&$ 0.3140 $&$ 0.341$&$ 2.177 $&$ 5.663 $\\
$ 0.65 $&$ 0.6 $&$ 1.249 $&$ 0.256 $&$ 0.3070 $&$ 0.340$&$ 2.963 $&$ 5.249 $\\
$ 0.70 $&$ 0.8 $&$ 1.247 $&$ 0.222 $&$ 0.3040 $&$ 0.339$&$ 3.983 $&$ 4.824 $\\
$ 0.75 $&$ 1.0 $&$ 1.245 $&$ 0.189 $&$ 0.3030 $&$ 0.338$&$ 5.277 $&$ 4.403 $\\
$ 0.80 $&$ 1.0 $&$ 1.245 $&$ 0.160 $&$ 0.3030 $&$ 0.337$&$ 6.918 $&$ 3.997 $\\
$ 0.85 $&$ 1.0 $&$ 1.245 $&$ 0.133 $&$ 0.3030 $&$ 0.336$&$ 8.975 $&$ 3.588 $\\
$ 0.86 $&$ 1.0 $&$ 1.245 $&$ 0.127 $&$ 0.3030 $&$ 0.335$&$ 9.441 $&$ 3.514 $\\
$ 0.87 $&$ 1.0 $&$ 1.245 $&$ 0.122 $&$ 0.3030 $&$ 0.335$&$ 9.926 $&$ 3.430 $\\
$ 0.88 $&$ 1.0 $&$ 1.245 $&$ 0.116 $&$ 0.3030 $&$ 0.335$&$ 10.431 $&$ 3.346 $\\
$ 0.89 $&$ 1.0 $&$ 1.245 $&$ 0.111 $&$ 0.3030 $&$ 0.335$&$ 10.955 $&$ 3.262 $\\
$ 0.90 $&$ 1.0 $&$ 1.245 $&$ 0.105 $&$ 0.3030 $&$ 0.334$&$ 11.499 $&$ 3.186 $\\
$ 0.91 $&$ 1.0 $&$ 1.245 $&$ 0.100 $&$ 0.3030 $&$ 0.334$&$ 12.063 $&$ 3.102 $\\
$ 0.92 $&$ 1.0 $&$ 1.245 $&$ 0.095 $&$ 0.3030 $&$ 0.334$&$ 12.646 $&$ 3.017 $\\
$ 0.93 $&$ 1.0 $&$ 1.245 $&$ 0.089 $&$ 0.3030 $&$ 0.333$&$ 13.250 $&$ 2.941 $\\
$ 0.94 $&$ 1.0 $&$ 1.245 $&$ 0.084 $&$ 0.3030 $&$ 0.333$&$ 13.872 $&$ 2.856 $\\
$ 0.95 $&$ 1.0 $&$ 1.245 $&$ 0.079 $&$ 0.3030 $&$ 0.333$&$ 14.513 $&$ 2.772 $\\
$ 0.96 $&$ 1.0 $&$ 1.245 $&$ 0.074 $&$ 0.3030 $&$ 0.332$&$ 15.173 $&$ 2.694 $\\
$ 0.97 $&$ 1.0 $&$ 1.245 $&$ 0.069 $&$ 0.3030 $&$ 0.332$&$ 15.850 $&$ 2.609 $\\
$ 0.98 $&$ 1.0 $&$ 1.245 $&$ 0.064 $&$ 0.3030 $&$ 0.331$&$ 16.544 $&$ 2.532 $\\
$ 0.99 $&$ 1.0 $&$ 1.245 $&$ 0.060 $&$ 0.3030 $&$ 0.331$&$ 17.253 $&$ 2.446 $\\
\hline
\end{tabular}
\end{table}
\end{small}
\indent For fixed values of $\s$, Table \ref{g1optatH0} provide bounds for $N(\s,H_0)$ of the shape \eqref{bnd-N@H0}.
In this case, the choice of $H$ is essential and we choose $H= H_0-10^{-6}$. As a consequence the ``main term'' is 
$\frac{10^{-6}}{2\pi d} (\log H_0) \log \Big( 1  + 10^{ 6}\mathcal{C}_1  (\log (kH_0))^{2\s}  (\log H_0)^{4(1-\s)} H_0^{\frac83(1-\s)} \Big)$ 
which becomes insignificant in comparison to $\frac{\mathcal{C}_2 (d, \eta, k, H, \mu,\s) }{2\pi d} (\log H_0)^2$, the term arising from the argument. 
We take $\alpha=0.324$, $\delta=0.3000$, and $k=1$ (as we did not find any other values giving better bounds).
The parameter $\eta$ is chosen to minimize $C_7(\eta,H)$, and then $\mu$ to minimize $\mu C_7(\eta,H)+C_8(k,\mu)$:
$\eta=0.2561 \ldots$ and $\mu=1.2453 \ldots$.
\begin{small}
\begin{table}[ht]
\caption{Bound \eqref{bnd-N@H0} with $k=1$}
\label{g1optatH0}
\begin{tabular}{|r|r|r|r|r|r|r|}
\hline
 $\s$  & $d$ & $\frac{1}{2\pi d}$ & $\mathcal{C}_1$ & $\frac{\mathcal{C}_2}{2\pi d}$ & $N(\s,H_0)\le$ \\
\hline
$ 0.60  $&$ 2.414$&$ 0.066 $&$ 2094.73 $&$ 0.893$&$ 520.28 $\\
$ 0.65  $&$ 3.621$&$ 0.044 $&$ 97986.60 $&$ 0.595$&$ 346.85 $\\
$ 0.70  $&$ 4.828$&$ 0.033 $&$ 4583580.34 $&$ 0.447$&$ 260.14 $\\
$ 0.75  $&$ 6.036$&$ 0.027 $&$ 214409007.32 $&$ 0.357$&$ 208.11 $\\
$ 0.80  $&$ 7.243$&$ 0.022 $&$ 10029544375.44 $&$ 0.298$&$ 173.42 $\\
$ 0.85  $&$ 8.450$&$ 0.019 $&$ 469158276689.92 $&$ 0.255$&$ 148.65 $\\
$ 0.86  $&$ 8.691$&$ 0.019 $&$ 1012341447042.27 $&$ 0.248$&$ 144.52 $\\
$ 0.87  $&$ 8.933$&$ 0.018 $&$ 2184412502812.95 $&$ 0.242$&$ 140.61 $\\
$ 0.88  $&$ 9.174$&$ 0.018 $&$ 4713486735514.76 $&$ 0.235$&$ 136.91 $\\
$ 0.89  $&$ 9.416$&$ 0.017 $&$ 10170678467214.40 $&$ 0.229$&$ 133.40 $\\
$ 0.90  $&$ 9.657$&$ 0.017 $&$ 21946110446020.33 $&$ 0.224$&$ 130.07 $\\
$ 0.91  $&$ 9.899$&$ 0.017 $&$ 47354929689448.17 $&$ 0.218$&$ 126.90 $\\
$ 0.92  $&$ 10.140$&$ 0.016 $&$ 102181631292174.11 $&$ 0.213$&$ 123.88 $\\
$ 0.93  $&$ 10.382$&$ 0.016 $&$ 220485720114084.42 $&$ 0.208$&$ 120.99 $\\
$ 0.94  $&$ 10.623$&$ 0.015 $&$ 475760194464125.94 $&$ 0.203$&$ 118.24 $\\
$ 0.95  $&$ 10.864$&$ 0.015 $&$ 1026586948666903.92 $&$ 0.199$&$ 115.62 $\\
$ 0.96  $&$ 11.106$&$ 0.015 $&$ 2215151194732183.30 $&$ 0.195$&$ 113.10 $\\
$ 0.97  $&$ 11.347$&$ 0.015 $&$ 4779814142285142.58 $&$ 0.190$&$ 110.70 $\\
$ 0.98  $&$ 11.589$&$ 0.014 $&$ 10313798574616601.14 $&$ 0.186$&$ 108.39 $\\
$ 0.99  $&$ 11.830$&$ 0.014 $&$ 22254932487167323.15 $&$ 0.183$&$ 106.18 $\\

\hline
\end{tabular}
\end{table}
\end{small}
\ \newpage

\end{document}